\newcommand{\F}{\mathbb{F}} 
\def\RCS$#1: #2 ${\expandafter\def\csname RCS#1\endcsname{#2}}
\DeclareMathOperator{\Frob}{Frob}
\DeclareMathOperator{\sgn}{sgn}
 \DeclareMathOperator{\ab}{ab}
 \newcommand{\eps}{\epsilon}
\newcommand{\epsbar}{\overline{\epsilon}}
\newcommand{\notdiv}{\nmid}
\newcommand{\wtilde}{\tilde{\omega}}
\newcommand{\To}{\longrightarrow}\newcommand{\into}{\hookrightarrow}
\newcommand{\psibar}{\overline{\psi}}
\newcommand{\epsilonbar}{\overline{\epsilon{}}}
\newcommand{\onto}{\twoheadrightarrow}
\newcommand{\isoto}{\stackrel{\sim}{\To}}
\newcommand{\bigO}{\mathcal{O}} 
\newcommand{\G}{\mathcal{G}}
\newcommand{\Z}{\mathbb{Z}} \newcommand{\A}{\mathbb{A}}
\newcommand{\Q}{\mathbb{Q}}
\newcommand{\fq}{\mathfrak{q}} \newcommand{\fQ}{\mathfrak{Q}}
\newcommand{\C}{\mathbb{C}}
\newcommand{\Favoid}{F^{(\mathrm{avoid})}} 
\newcommand{\tensor}{\otimes} 
\newcommand{\thetabar}{\bar{\theta}}
\newcommand{\bb}{\mathbb} 
\newcommand{\mc}{\mathcal}
\newcommand{\wt}{\widetilde} 
\newcommand{\mf}{\mathfrak}
\DeclareMathOperator{\Iw}{Iw}
\DeclareMathOperator{\Art}{Art}
\DeclareMathOperator{\univ}{univ}
\DeclareMathOperator{\red}{red}
\DeclareMathOperator{\hcf}{hcf}
\newcommand{\rhobar}{\overline{\rho}} 
\newcommand{\chibar}{\overline{\chi}} 
\newcommand{\rbar}{\bar{r}}
\newcommand{\Rbar}{\bar{R}}
\newcommand{\rbarwtv}{\rbar|_{G_{F_{\wt{v}}}}}
\newcommand{\Gal}{\operatorname{Gal}}
\newcommand{\GL}{\operatorname{GL}}
\newcommand{\SO}{\operatorname{SO}}
\newcommand{\SU}{\operatorname{SU}}
\newcommand{\gl}{\operatorname{\mathfrak{gl}}}
\newcommand{\PGL}{\operatorname{PGL}}
\newcommand{\HT}{\operatorname{HT}}
 \newcommand{\Qbar}{\overline{\Q}}
\newcommand{\Zbar}{\overline{\Z}}
 \newcommand{\Qp}{\Q_p}
\newcommand{\Ql}{{\Q_l}} 
\newcommand{\Qpbar}{\overline{\Q}_p}
\newcommand{\Qlbar}{\overline{\Q}_{l}}
\newcommand{\Zl}{{\Z_l}}
\newcommand{\Zlbar}{\overline{\Z}_{l}}
\newcommand{\Fl}{{\F_l}}
\newcommand{\Flbar}{\overline{\F}_l} 
\newcommand{\tw}{{\tilde{w}}}
\newcommand{\tv}{{\tilde{v}}}
\newcommand{\gr}{\operatorname{gr}}
\newcommand{\Spec}{\operatorname{Spec}}
\newcommand{\Ind}{\operatorname{Ind}}
\newcommand{\SL}{\operatorname{SL}}
\newcommand{\PSL}{\operatorname{PSL}}
\newcommand{\ad}{\operatorname{ad}}
\newcommand{\tr}{\operatorname{tr}}
\newcommand{\Hom}{\operatorname{Hom}}
\newcommand{\Fil}{\operatorname{Fil}}
\newcommand{\diag}{\operatorname{diag}}
\DeclareMathOperator{\Def}{Def}
\newtheorem{thm}{Theorem}[subsection]
 \newtheorem{lemma}[thm]{Lemma}
\newtheorem{lem}[thm]{Lemma} \newtheorem{prop}[thm]{Proposition}
 \theoremstyle{definition}
 \theoremstyle{definition}
\newtheorem{defn}[thm]{Definition} \theoremstyle{remark}
\numberwithin{equation}{subsection}
\theoremstyle{definition}
\begin{document}
\title[Ordinary lifts of Hilbert
  modular forms]  {Congruences between Hilbert modular forms: constructing ordinary lifts}

\author{Thomas Barnet-Lamb}\email{tbl@brandeis.edu}\address{Department of Mathematics, Brandeis University}
\author{Toby Gee} \email{tgee@math.harvard.edu} \address{Department of
  Mathematics, Harvard University} \author{David Geraghty}
\email{geraghty@math.harvard.edu}\address{Department of Mathematics,
  Harvard University} \thanks{The second author was partially supported
  by NSF grant DMS-0841491.}  \subjclass[2000]{11F33.}
\begin{abstract}Under mild hypotheses, we prove that if $F$ is a
  totally real field, and $\rhobar:G_F\to\GL_2(\Flbar)$ is irreducible
  and modular, then there is a finite solvable totally real extension
  $F'/F$ such that $\rhobar|_{G_{F'}}$ has a modular lift which is
  ordinary at each place dividing $l$. We deduce a similar result for
  $\rhobar$ itself, under the assumption that at places $v|l$ the
  representation $\rhobar|_{G_{F_v}}$ is reducible. This allows us to deduce improvements to results
  in the literature on modularity lifting theorems for potentially
  Barsotti-Tate representations and the Buzzard-Diamond-Jarvis
  conjecture. The proof makes use of a novel lifting technique, going
  via rank 4 unitary groups.
\end{abstract}
\maketitle
\tableofcontents
\section{Introduction.}\label{sec:intro}

\subsection{} If $l$ is a prime and $f$ is a
cuspidal newform of weight $2\le k\le l+1$ and level prime to $l$, with
associated mod $l$ Galois representation
$\rhobar_f:G_\Q\to\GL_2(\Fl)$, then there are two possibilities for
the local representation $\rhobar_f|_{G_\Ql}$; either it is reducible
or irreducible. Furthermore, there is a simple criterion
distinguishing these two cases: $\rhobar_f|_{G_\Ql}$ is reducible if
and only if $f$ is ordinary at $l$, in the sense that $a_l$, the
eigenvalue of the Hecke operator $T_l$, is an $l$-adic unit. It is
relatively straightforward to deduce from this result that if
$\rhobar:G_\Q\to\GL_2(\Fl)$ is continuous, irreducible and modular (or
equivalently odd, by Serre's conjecture) and $\rhobar|_{G_\Ql}$ is
reducible, then $\rhobar$ has an ordinary modular lift
$\rho:G_\Q\to\GL_2(\Qlbar)$. This can be generalised to prove the
analogous statement for modular representations of $G_{F^+}$, where ${F^+}$ is
a totally real field in which $l$ splits completely (see Lemma 2.14 of
\cite{MR2392362} together with Lemma \ref{lem:existence of local pot BT lift}
of this paper).

If $l$ does not split completely in ${F^+}$ the situation is rather more
complicated. It is, for example, quite possible for the reduction mod
$l$ of a non-ordinary Galois representation to be ordinary; one easy
way to see this is that by solvable base change one may even find
non-ordinary representations $\rho:G_{F^+}\to\GL_2(\Qlbar)$ such that
$\rhobar|_{G_{{F^+,v}}}$ is trivial for each $v|l$, simply by taking any
non-ordinary modular representation and making a sufficiently large
base change. Moreover, as far as we know nothing is known about the
existence of global ordinary lifts other than Lemma 2.14 of
\cite{MR2392362}, even in the case that $l$ is unramified in ${F^+}$
(despite the claims made in section 3 of \cite{MR2280776}). There are,
however, several reasons to want to prove such results in greater
generality: in particular, the very general modularity lifting
theorems for two-dimensional potentially Barsotti-Tate representations
proved in \cite{kis04} and \cite{MR2280776} are subject to a
hypothesis on the existence of ordinary lifts which limits their
applicability.

In the present paper we resolve this situation by proving that
ordinary lifts exist in considerable generality; all that we require
is that $l\ge 5$, and a mild hypothesis on the image
$\rhobar(G_{F^+})$. The techniques that we employ are somewhat novel, and
are inspired by our work on the Sato-Tate conjecture (\cite{BLGG}); in
particular, they make use of automorphy lifting theorems for rank 4
unitary groups.

As in our earlier work on related questions (e.g. \cite{gee061} and
\cite{GG}) we construct the lifts that we seek by considering a
universal deformation ring, and proving that it has points in
characteristic zero. This gives a candidate Galois representation
lifting $\rhobar$, and one can then hope to prove that this
representation is modular by applying modularity lifting
theorems. In fact, the modularity of the representation usually
follows from the proof that the universal deformation ring has points
in characteristic zero (although not always; there is also a method
due to Ramakrishna (\cite{MR1935843}) of a purely Galois-cohomological
nature which gives no insight into modularity).

To fix ideas, let $R_{F^+}^{univ,ord}$ be the universal deformation ring
for lifts of $\rhobar$ which are crystalline and ordinary of fixed
Hodge-Tate weights 0 and 1, and are unramified outside of a fixed set of
places. It is enough to prove that this ring has a $\Qlbar$-point
which corresponds to a modular representation. By standard Galois
cohomological techniques, one can prove that $\dim R_{F^+}^{univ,ord}\ge
1$, so that in order to prove that it has a $\Qlbar$-point, it is
enough to prove that it is finite over $\Z_l$. In previous work,
following an idea of Khare-Wintenberger and Kisin, we have established
such results by comparing universal deformation rings $R_{F^+}^{univ,ord}$
to related rings for finite Galois extensions $F_1^+/F^+$. We now take
a short detour to explain this, and to explain why this technique is
insufficient for the problem at hand.

Consider, for example, the situation examined in
\cite{gee061}. Continue to assume that
$\rhobar:G_{F^+}\to\GL_2(\Flbar)$ is modular and irreducible, and let
$R_{F^+}^{univ,\tau}$ be the universal deformation ring for lifts of
$\rhobar$ which are potentially Barsotti-Tate and ordinary, of
prescribed Galois type $\tau_v$ at each place $v$ dividing $l$, and which are
unramified outside a fixed finite set of places. Assume that at each
place $v|l$ the local representation $\rhobar_{G_{F^+_v}}$ has a
potentially Barsotti-Tate lift of type $\tau_v$, so that there is no
trivial obstruction to the existence of a global lift. Then one may
again prove that $\dim R^{univ,\tau}_{F^+}\ge 1$, and one again wishes
to show that this ring is finite over $\Z_l$. In order to do so, one
considers a finite solvable extension $F^+_1/F^+$ of totally real
fields, with the property that each of the types $\tau_v$ become
trivial when restricted to the Weil groups of the places above $l$ of
$F^+_1$, and each of the representations $\rhobar|_{G_{F_v}}$ also
become trivial. The extension $F^+_1/F^+$ must also satisfy a number of other
properties which are unimportant to us here. Then one considers the
universal deformation ring $R_{F^+_1}^{univ,BT}$ for lifts of
$\rhobar|_{G_{F^+_1}}$ which are Barsotti-Tate at all places $v|l$,
and unramified outside of a finite set of places, containing all
places lying over the corresponding set for $F^+$. 

Restricting the universal deformation
$G_{F^+}\to\GL_2(R_{F^+}^{univ,\tau})$ to $G_{F^+_1}$ gives (by the
universal properties of universal deformations) $R^{univ,\tau}_{F^+}$
the structure of an $R^{univ,BT}_{F^+_1}$-algebra. It is
straightforward to check (using the fact that the universal
deformation rings are generated by the traces of the images of the
elements of the Galois groups) that in fact $R_{F^+}^{univ,\tau}$ is a finite
$R_{F^+_1}^{univ,BT}$-algebra. Thus to prove that
$R_{F^+}^{univ,\tau}$ is a finite $\Zl$-algebra, it is sufficient to
prove that $R_{F^+_1}^{univ,BT}$ is a finite $\Zl$-algebra.

At this point, we may apply the main theorems of \cite{kis04} and
\cite{MR2280776}, which assert, under certain hypotheses, that
$(R_{F^+_1}^{univ,BT})^{\red}$ is isomorphic to a Hecke algebra. This is
enough to prove that $R_{F^+_1}^{univ,BT}$ is a finite $\Zl$-algebra,
and we are done. The only difficulty is that the most stringent
hypothesis of the main theorems of \cite{kis04} and \cite{MR2280776}
(a hypothesis that we remove in this paper) is that, in order to prove
that $(R_{F^+_1}^{univ,BT})^{\red}$ is isomorphic to a Hecke algebra, one
needs to know that $\rhobar|_{G_{F^+_1}}$ has a modular lift which is
ordinary at all places lying over $l$. This is because the
Taylor-Wiles-Kisin method reduces modularity to the question of
whether one can produce modular Galois representations which give rise
to points on specified components of local (framed) deformation
rings. It is proved in \cite{kis04} and
\cite{MR2280776} that the local Barsotti-Tate deformation rings at hand have
precisely two components, one corresponding to ordinary Galois
representations, and one to non-ordinary representations. One can
always demonstrate the existence of non-ordinary modular lifts, at
least after a solvable base change, by a simple trick using cuspidal
types (cf. Corollary 3.1.6 of \cite{kis04}); but no such argument is
known for ordinary lifts.

In practice, one can sometimes get around this difficulty by only
considering types $\tau_v$ such that any potentially Barsotti-Tate
lift of $\rhobar|_{G_{F_v}}$ of type $\tau_v$ is automatically
non-ordinary, and replacing $R_{F^+_1}^{univ,BT}$ with the universal
deformation ring for non-ordinary Barsotti-Tate deformations. For
example, this approach was taken in \cite{gee08serrewts} in order to
prove unconditional results about the Buzzard-Diamond-Jarvis
conjecture for non-ordinary weights (and again, the
present paper allows us to extend these results to the
case of ordinary weights).

One might hope that an argument along these lines would enable one to
demonstrate the existence of ordinary modular lifts; there is no
problem in proving that the corresponding universal deformation ring
(for ordinary lifts of some fixed regular Hodge-Tate weights) is
positive-dimensional, so one just needs to prove the finiteness of
some universal deformation ring corresponding to the same deformation
problem after a base change, which one could do by proving a
modularity lifting theorem. This, however, appears to be difficult,
because the restriction of an ordinary lifting of $\rhobar|_{G_{F_v}}$
to an open subgroup of $G_{F_v}$ is still ordinary, and components of
local crystalline deformation rings contain either only ordinary
points or only non-ordinary points, so no advantage is gained by
making a base change.

Instead, we proceed by a method related to our work on the Sato-Tate
conjecture (\cite{BLGG}). Rather than employ base change, we instead
use a different functoriality, that of automorphic induction from
$\GL_2$ to $\GL_4$ (in fact, we also make various uses of base change,
but it is the use of automorphic induction, and the induction of
Galois representations, that is the key new ingredient in our
arguments). 

We now sketch the main argument. From the discussion above, we see
that it is enough to prove that there is a finite solvable extension
$F^+_1/F^+$ of totally real fields such that $\rhobar|_{G_{F^+_1}}$
has an ordinary modular Barsotti-Tate lift. In fact, we prove this
theorem without any assumptions on the restrictions of $\rhobar$ to
decomposition groups at places above $l$, and then deduce the main
result using the methods of \cite{gee061}, which we explained
above. We begin by choosing $F^+_1/F^+$ a finite solvable extension of
totally real fields such that $\rhobar|_{G_{F^+_{1,v}}}$ is trivial
for each place $v|l$ of $F^+_1$, and such that $\rhobar$ has a modular
Barsotti-Tate lift $\rho^{BT}:G_{F^+_1}\to\GL_2(\Qlbar)$ which is unramified at all places not dividing $l$,
and which is non-ordinary at all places dividing $l$. This is a
straightforward exercise; it uses the analogue of the statement that
all cuspidal newforms are congruent to forms of weight $2$, together
with the Skinner-Wiles trick for level lowering, and the trick using
cuspidal types that we mentioned above in order to obtain
non-ordinarity.

We now choose a CM quadratic extension $M/F^+_1$ in which every place
dividing $l$ splits, and an algebraic character
$\theta:G_M\to\Qlbar^\times$ such that $\Ind_{G_M}^{G_{F^+_1}}\theta$
is Barsotti-Tate and non-ordinary at
each place dividing $l$. We also construct an algebraic character
$\theta':G_M\to\Qlbar^\times$ such that
$\Ind_{G_M}^{G_{F^+_1}}\theta'$ is crystalline and ordinary with Hodge-Tate weights $0$ and $3$ at
each place dividing $l$, with $\theta$ and $\theta'$ congruent mod
$l$. We now consider 2 deformation problems: firstly, we let
$R_2^{univ,ord}$ denote the universal deformation ring for deformations of
$\rhobar$ which are unramified at all places not dividing $l$ and
which are crystalline and ordinary at all places dividing $l$ with
Hodge-Tate weights $0$ and $3$. Secondly, we let $R_4^{univ}$ denote
the universal deformation ring for certain self-dual lifts of the
$4$-dimensional representation
$\rhobar\otimes\Ind_{G_M}^{G_{F_1^+}}\thetabar$, which are unramified
at all places not dividing $l$, and which are crystalline at all
places dividing $l$ with Hodge-Tate weights $0$, $1$, $3$ and $4$, and
which furthermore satisfy an additional condition: for each place $v|l$, the global
deformations give rise to local liftings on the same components of
the appropriate local crystalline lifting ring as the
representation \[((\Ind_{G_M}^{G_{F^+_1}}\theta)\otimes(\Ind_{G_M}^{G_{F^+_1}}\theta'))|_{G_{F^+_{1,v}}}.\] 

Now, note that (because the local crystalline ordinary lifting rings
are connected) a putative $\Qlbar$-point of $R_2^{univ,ord}$ would
correspond to a representation $\rho:G_{F^+_1}\to\GL_2(\Qlbar)$ such
that the representation $\rho\otimes(\Ind_{G_M}^{G_{F^+_1}}\theta)$
gives a $\Qpbar$-point of $R_4^{univ}$. Thus we obtain (from the
universal properties defining $R_4^{univ}$ and $R_2^{univ,ord}$) a
natural map $R^{univ}_4\to R_2^{univ,ord}$. We show that this map is
finite, again making use of the fact that universal deformation rings
are generated by traces.

Furthermore, we show that $R^{univ}_4$ is a finite $\Zl$-algebra, by
identifying its reduced quotient with a Hecke algebra, using the automorphy lifting
theorem proved in \cite{BLGG}, and the automorphy of
$\rho^{BT}\otimes\Ind_{G_M}^{G_{F^+_1}}\theta$ (which follows from
standard properties of automorphic induction). Thus $R_2^{univ,ord}$
is a finite $\Zl$-algebra. Standard Galois cohomology calculations
show that it has dimension at least 1, so it has $\Zl$-rank at least
one, and thus has $\Qlbar$-points. Let
$\rho^{ord}:G_{F^+_1}\to\GL_2(\Qlbar)$ be the lift of
$\rhobar|_{G_{F^+_1}}$ corresponding to such a point. By construction,
this is ordinary at each place $v|l$, and since $(R^{univ}_4)^{red}$
has been identified with a Hecke algebra, we see that
$\rho^{ord}\otimes\Ind_{G_M}^{G_{F^+_1}}\theta$ is automorphic. The
modularity of $\rho^{ord}$ then follows from an idea of Harris
(\cite{harristrick}, although the version we use is based on that
employed in \cite{BLGHT}). Finally, a basic argument with Hida
families allows us to replace $\rho^{ord}$ with a modular ordinary
Barsotti-Tate representation.

In fact, we have glossed over several technical difficulties in the
above argument. It is more convenient to work over a quadratic CM
extension of $F^+_1$ for much of the argument, only descending to $F^+_1$
at the end. It is also convenient to make several further solvable
base changes during the argument; for example, we prefer to work in
situations where all representations considered are unramified outside
of $l$, in order to avoid complications due to non-smooth points on
local lifting rings. We also need to ensure that the various
hypotheses of the automorphy lifting theorems used are satisfied; this
requires us to assume that $l>4$ for most of the paper (because we use
automorphy lifting theorems for $\GL_4$), and to assume
various ``big image'' hypotheses. 

We now explain the structure of the paper. In section
\ref{sec:character} we construct the characters $\theta$ and
$\theta'$. The arguments of this section are very similar to the
corresponding arguments in \cite{BLGHT} and \cite{BLGG}. Section
\ref{sec:lifting} contains our main results on global deformation
rings, including the finiteness results discussed above, and a
discussion of oddness. We take care to work in considerable
generality, with a view to further applications; in particular, in
future work we will apply the machinery of this section to
generalisations of the weight part of Serre's conjecture. In section
\ref{sec:big image GL2} we examine the condition of a subgroup of
$\GL_2(\Flbar)$ being 2-big in some detail; this allows us to obtain
concrete conditions under which our main theorems hold. Section
\ref{sec:untwisting CM} contains a slight variant on the trick of
Harris mentioned above, and is very similar to the corresponding
section of \cite{BLGG}. The main theorems are obtained in section
\ref{sec:main thm}. In addition to the argument explained above, we
give a more elementary argument in the case of residually dihedral
representations, which allows us to handle some cases with $l=3$.

We would like to thank Brian Conrad for his assistance with the proof
of Lemma \ref{lem:existence of local pot BT lift}. This paper was
written simultaneously with \cite{BLGGT}, and we would like to thank
Richard Taylor for a number of helpful discussions.

\subsection{Notation}
If $M$ is a field, we let $G_M$ denote its absolute Galois group.  We
write all matrix transposes on the left; so ${}^tA$ is the transpose
of $A$. Let $\epsilon$ denote the $l$-adic cyclotomic character, and
$\bar{\epsilon}$ or $\omega$ the mod $l$ cyclotomic character. If $M$ is
a finite extension of $\bb{Q}_p$ for some $p$, we write $I_M$ for the
inertia subgroup of $G_M$. If $R$ is a local ring we write
$\mf{m}_{R}$ for the maximal ideal of $R$.

We fix an algebraic closure $\Qbar$ of $\Q$, and regard all algebraic
extensions of $\Q$ as subfields of $\Qbar$. For each prime $p$ we fix
an algebraic closure $\Qpbar$ of $\Qp$, and we fix an embedding
$\Qbar\into\Qpbar$. In this way, if $v$ is a finite place of a number
field $F$, we have a homomorphism $G_{F_v}\into G_F$.

We normalise the definition of Hodge-Tate weights so that all the
Hodge-Tate weights of the $l$-adic cyclotomic character are $-1$. We
refer to a crystalline representation with all Hodge-Tate weights
equal to $0$ or $1$ as a Barsotti-Tate representation (this is
somewhat non-standard terminology, but it will be convenient).

We will use some of the notation and definitions of \cite{cht} without
comment. In particular, we will use the notions of RACSDC and RAESDC
automorphic representations, for which see sections 4.2 and 4.3 of
\cite{cht}. We will also use the notion of a RAECSDC automorphic
representation, for which see section 1 of \cite{BLGHT}. If $\pi$ is a RAESDC automorphic representation of $\GL_n(\A_F)$, $F$
a totally real field, and $\iota:\Qlbar\isoto\C$, then we let
$r_{l,\iota}(\pi):G_F\to\GL_n(\Qlbar)$ denote the corresponding Galois
representation. Similarly, if $\pi$ is a RAECSDC or RACSDC automorphic representation of $\GL_n(\A_F)$, $F$
a CM field (in this paper, all CM fields are totally imaginary), and $\iota:\Qlbar\isoto\C$, then we let
$r_{l,\iota}(\pi):G_F\to\GL_n(\Qlbar)$ denote the corresponding Galois
representation. For the properties of $r_{l,\iota}(\pi)$, see Theorems
1.1 and 1.2 of \cite{BLGHT}. 

\section{Another character-building exercise.}\label{sec:character}
\subsection{} In this section, we will complete a trivial but rather technical exercise
in class field theory, which will allow us to construct characters with prescribed 
properties which will be useful to us throughout our arguments. Similar calculations
appeared in \cite{BLGHT} and \cite{BLGG}; we apologize that our earlier efforts
were not carried out in sufficient generality for us to be able to to
simply cite them.

We first recall the following definition.

\begin{defn}
\label{defn: m-big}
  Let $k/\F_l$ be algebraic and let $m$ and $n$ be positive integers. We say that a
  subgroup $H$ of $\GL_n(k)$ is $m$-\emph{big} if the
  following conditions are satisfied.
  \begin{itemize}
  \item $H$ has no $l$-power order quotient.
  \item $H^0(H,\mathfrak{sl}_n(k))=(0)$.
  \item $H^1(H,\mathfrak{sl}_n(k))=(0)$.
  \item For all irreducible $k[H]$-submodules $W$ of
    $\mathfrak{gl}_n(k)$ we can find $h\in H$ and $\alpha\in k$ such
    that:
    \begin{itemize}
    \item $\alpha$ is a simple root of the characteristic polynomial
      of $h$, and if $\beta$ is any other root then
      $\alpha^m\ne\beta^m$.
    \item Let $\pi_{h,\alpha}$ (respectively $i_{h,\alpha}$) denote
      the $h$-equivariant projection from $k^n$ to the
      $\alpha$-eigenspace of $h$ (respectively the $h$-equivariant
      injection from the $\alpha$-eigenspace of $h$ to $k^n$). Then
      $\pi_{h,\alpha}\circ W\circ i_{h,\alpha}\ne 0$.
    \end{itemize}
  \end{itemize}
Also, we say that $H$ is \emph{big} if it is 1-big.
\end{defn}

We now turn to the main result of this section.

\begin{lem}\label{lem:char building}
Suppose that:
\begin{itemize}
\item $F^+$ is a totally real field,
\item $n$ is a positive integer,
\item  $l$ is a rational prime, with $l>2$ and
  $l>2n-2$,
\item $m$ is a positive even integer, with $l\notdiv m$,
\item $\Favoid$ is a finite extension of $F^+$,
\item $T$ is a finite set of primes of $F^+$, not containing places above $l$,
\item $\eta:G_{F^+} \to \Zbar_l^\times$ is a finite order character, such that $\eta$ takes some fixed
value $\gamma\in\{\pm1\}$
on every complex conjugation, and is unramified at each prime of $T$, and
\item $\eta':G_{F^+} \to \Zbar_l^\times$ is another finite order
  character, congruent to $\eta$ (mod $l$), and is unramified at each prime of $T$.
(Note that $\eta'$ therefore also takes the
value $\gamma$ on every complex conjugation.)
\end{itemize}

Suppose further that for each embedding $\tau$ of $F^+$ into $\Qbar_l$, and each 
integer $i, 1\leq i\leq m$, we are given an integer $h_{i,\tau}$ and another integer 
$h'_{i,\tau}$, Finally, suppose that there are integers $w, w'$ such that for each $i$ and $\tau$, 
the integers $h_{i,\tau}$, $h'_{i,\tau}$ satisfy:
$$h_{i,\tau} + h_{m+1-i,\tau}=w\quad\text{and}\quad h'_{i,\tau} + h'_{m+1-i,\tau}=w'.$$

Then we can find a cyclic, degree $m$ CM extension $M$ of $F^+$ which
is linearly disjoint from $\Favoid$ over $F^+$, and continuous
characters
$$\theta,\theta':G_M\to\Zbar_l^\times$$
such that $\theta$ and
$\theta'$ are de Rham at all primes above $l$, and furthermore:
\begin{enumerate}
\item $\theta,\theta'$ are congruent (mod $l$).
\item For any $\rbar:G_{F^+}\to\GL_n(\Flbar)$, a continuous Galois representation ramified only
at primes in $T$ and above $l$, which satisfies $\overline{F^+}^{\ker \bar{r}}(\zeta_l)\subset\Favoid$:
    \begin{itemize}
    \item  \label{lem:char building - big image}
    If $\rbar(G_{F^+(\zeta_l)})$ is $m$-big, then
    $(\rbar\tensor\Ind_{G_M}^{G_{F^+}} \overline{\theta})(G_{F^+(\zeta_l)})=(\rbar\tensor\Ind_{G_M}^{G_{F^+}}
    \overline{\theta}')(G_{F^+(\zeta_l)})$ is big.
    \item If $[\overline{F^+}^{\ker\ad \bar{r}}(\zeta_l):\overline{F^+}^{\ker\ad \bar{r}}]>m$ then the 
    fixed field of the kernel of
    $\ad(\bar{r}\tensor\Ind_{G_M}^{G_{F^+}}
    \bar{\theta})=\ad(\bar{r}\tensor\Ind_{G_M}^{G_{F^+}}\bar{\theta}')$
    will not contain
    $\zeta_l$.
    \end{itemize}
\item    \label{lem:char building - ind theta pairing}
      We can put a perfect pairing on $\Ind_{G_M}^{G_{F^+}} \theta$ satisfying
     \begin{enumerate}
     \item $\langle v_1,v_2\rangle=(-1)^w\gamma\langle v_2,v_1\rangle$.
     \item For $\sigma \in G_{F^+}$, we have 
        $$\langle \sigma v_1, \sigma v_2\rangle 
             = \eps(\sigma)^{-w}\eta(\sigma) \langle  v_1, v_2\rangle.$$
     \end{enumerate}
     Thus, in particular, 
     $$(\Ind_{G_M}^{G_{F^+}} \theta)\cong (\Ind_{G_M}^{G_{F^+}} \theta)^\vee
     \tensor \eps^{-w}\eta.$$
     (Note that
     the character on the right hand side takes the value $(-1)^w\gamma$ on
     complex conjugations.)  
\item   \label{lem:char building - ind theta prime pairing}
 We can put a perfect pairing on $\Ind_{G_M}^{G_{F^+}} \theta'$ satisfying
     \begin{enumerate}
     \item $\langle v_1,v_2\rangle=(-1)^{w}\gamma \langle v_2,v_1\rangle$.
     \item For $\sigma \in G_{F^+}$, we have 
        $$\langle \sigma v_1, \sigma v_2\rangle 
             = \eps(\sigma)^{-w'}
             \wtilde(\sigma)^{w'-w} \eta'(\sigma)\langle  v_1, v_2\rangle$$
             where $\wtilde$ is the Teichm\"{u}ller lift of the mod
             $l$ cyclotomic character.
     \end{enumerate}
     Thus, in particular, 
     $$(\Ind_{G_M}^{G_{F^+}} \theta')\cong (\Ind_{G_M}^{G_{F^+}}
     \theta')^\vee\tensor \eps^{-w'}\wtilde^{w'-w}\eta'.$$ 
     (Note that
     the character on the right hand side takes the value $(-1)^{w}\gamma$ on
     complex conjugations.)       
\item For each $v$ above $l$, the representation $(\Ind_{G_M}^{G_{F^+}} \theta)|_{G_{F^+_v}}$
is conjugate to a representation which breaks up as a direct sum of characters:

$$(\Ind_{G_M}^{G_{F^+}} \theta)|_{G_{F^+_v}} \cong 
               \chi^{(v)}_1\oplus\chi^{(v)}_2\oplus\dots\oplus \chi^{(v)}_{m}$$
where, for each $i$, $1\leq i\leq m$, and each embedding $\tau:F^+_v\to\Qbar_l$ we have that:
$$\HT_\tau(\chi^{(v)}_i) = h_{i,\tau}.$$ Similarly, the representation $(\Ind_{G_M}^{G_{F^+}} \theta')|_{G_{F^+_v}}$
is conjugate to a representation which breaks up as a direct sum of characters:
$$(\Ind_{G_M}^{G_{F^+}} \theta')|_{G_{F^+_v}} \cong 
               \chi'^{(v)}_1\oplus\chi'^{(v)}_2\oplus\dots\oplus \chi'^{(v)}_{m}$$
where, for each $i$, $1\leq i\leq m$, and each embedding $\tau:F^+_v\to\Qbar_l$ we have that:
$$\HT_\tau(\chi'^{(v)}_i) = h'_{i,\tau}.$$

\item $M/F^+$ is unramified at each prime of $T$; and $\theta, \theta'$ are unramified above each
prime of $T$. Thus $\Ind_{G_M}^{G_{F^+}} \theta$ and $\Ind_{G_M}^{G_{F^+}} \theta'$ are
unramified at each prime of $T$. Each prime of $F^+$ above $l$ splits completely in $M$.
\item If $\eta$ is unramified at each place dividing $l$, then
  $\theta$ is crystalline.
\end{enumerate}
\end{lem}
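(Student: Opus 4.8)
The plan is to build the field $M$ and the characters $\theta,\theta'$ by an explicit class field theory construction, working one prime at a time and then patching. First I would choose the cyclic degree-$m$ CM extension $M/F^+$: pick a rational prime $q$ (or a pair of primes) split in $F^+$, avoiding $l$, $T$, and the primes ramified in $\Favoid$, such that there is a cyclic CM extension $M/F^+$ of degree $m$ which is unramified outside $q$, in which every prime above $l$ splits completely, which is unramified at $T$, and which is linearly disjoint from $\Favoid$ over $F^+$. Linear disjointness is arranged by a Chebotarev/choice-of-$q$ argument, using that $m$ is even so a CM extension of the required degree exists; the complex conjugation in $\Gal(M/F^+)$ is the unique element of order $2$. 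Having fixed $M$, note that $\Gal(M/F^+)$ is cyclic of order $m$, so $\Ind_{G_M}^{G_{F^+}}$ of a character has a concrete matrix description and its determinant and self-duality properties can be computed from the norm character of $M/F^+$.

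Next I would construct $\theta$ (and symmetrically $\theta'$) as a continuous character $G_M\to\Zbar_l^\times$ by prescribing its behaviour on inertia at the primes above $l$ and on a complex conjugation, then extending. At each prime $w|l$ of $M$ (lying over $v|l$ of $F^+$, which splits completely in $M$ by our choice), I prescribe the restriction $\theta|_{I_w}$ via the local reciprocity map and a choice of Lubin--Tate-type character so that the Hodge--Tate weight $\HT_\tau(\theta)$ at the relevant embedding realizes the desired $h_{i,\tau}$ after inducing and sorting the resulting characters $\chi^{(v)}_1,\dots,\chi^{(v)}_m$. The symmetry condition $h_{i,\tau}+h_{m+1-i,\tau}=w$ is exactly what is needed for the induced representation to admit a pairing with similitude character $\eps^{-w}\eta$: the constraint forces the product of the $\chi^{(v)}_i$ (hence $\det$ of the induced representation restricted to $G_{F^+_v}$) to match $\eps^{-w}\eta$ locally, and a global character with prescribed local components of the right infinity-type exists by the theory of algebraic Hecke characters / the argument of \cite{BLGHT}, \cite{BLGG}. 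I would fix the value on complex conjugation so that the sign $(-1)^w\gamma$ in the pairing comes out correctly; the congruence $\theta\equiv\theta'\pmod l$ is arranged by choosing $\theta'$ to have the same reduction, which is possible because $\eta\equiv\eta'\pmod l$ and the Hodge--Tate weights only affect the wildly ramified part at $l$, which is trivial mod $l$.

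Then I would verify the image assertions in (2). Since $M$ is linearly disjoint from $\Favoid$ over $F^+$ and $\rbar$ has $\overline{F^+}^{\ker\rbar}(\zeta_l)\subset\Favoid$, the restriction $\rbar|_{G_{M(\zeta_l)}}$ still has $m$-big image, and one checks directly from Definition~\ref{defn: m-big} that tensoring with the $m$-dimensional induced representation $\Ind_{G_M}^{G_{F^+}}\overline\theta$ (whose image, after restricting to $G_{F^+(\zeta_l)}$, is governed by the cyclic group $\Gal(M/F^+)$ permuting the summands) produces a $1$-big image — here the key point is that the ``simple root'' condition for $m$-bigness of $\rbar$ feeds into the ordinary bigness of the tensor product precisely because of the exponent-$m$ twisting built into the $m$-big hypothesis; and the equality of the two images follows since $\overline\theta$ and $\overline\theta'$ have the same reduction. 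The statement about $\zeta_l$ not lying in the fixed field of $\ad$ of the tensor product is a similar index computation: the degree $[\overline{F^+}^{\ker\ad\rbar}(\zeta_l):\overline{F^+}^{\ker\ad\rbar}]$ divides $l-1$ and is assumed $>m$, while adjoining the data of $\Ind\overline\theta$ enlarges the relevant field by a cyclic group of order dividing $m$, so $\zeta_l$ cannot be swallowed. Finally, part (6) is immediate from the construction (everything is unramified at $T$ by choice of $q$ and of the conductors of $\theta,\theta'$), and part (7) follows because if $\eta$ is unramified at $l$ then the similitude/determinant constraint lets us take $\theta$ with Hodge--Tate--regular but crystalline local behaviour at each $w|l$ — i.e. the ramification of $\theta$ at $l$ can be taken to be of the Lubin--Tate type giving a crystalline (indeed de Rham with no monodromy) character, since there is no wild part forced by $\eta$.

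The main obstacle I expect is the simultaneous satisfaction of \emph{all} the local conditions at $l$ together with the prescribed behaviour on complex conjugation and the congruence mod $l$ — i.e. producing a single global algebraic Hecke character of $M$ with exactly the prescribed infinity type at every archimedean place and prescribed (possibly ramified) local components at the primes above $l$, whose induction to $F^+$ has the right determinant $\eps^{-w}\eta$ (resp. $\eps^{-w'}\wtilde^{w'-w}\eta'$) and the right sign on complex conjugation. This is a bookkeeping-heavy compatibility check: one must see that the determinant of $\Ind_{G_M}^{G_{F^+}}\theta$, which equals $(\text{transfer of }\theta)\cdot(\text{quadratic character cutting out the maximal totally real subfield})$, is forced by the symmetry relations on the $h_{i,\tau}$ to coincide with the target character, and that the Teichmüller twist $\wtilde^{w'-w}$ appearing for $\theta'$ is exactly the discrepancy between the naive determinant of the $\theta'$-induction and $\eps^{-w'}\eta'$. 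Everything else — existence of $q$, linear disjointness, the $m$-bigness transfer, the $\zeta_l$ index count — is routine Chebotarev and group cohomology of the kind carried out in \cite{BLGHT} and \cite{BLGG}.
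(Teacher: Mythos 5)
Your proposal captures the broad architecture — build a cyclic CM extension $M/F^+$ by class field theory, prescribe the infinity types of $\theta,\theta'$ to realize the desired Hodge--Tate weights, enforce the congruence by a Teichm\"uller twist, and verify the pairing/determinant constraints by a bookkeeping calculation — and that is indeed what the paper does (Steps 1, 3, 4, 5 of its proof, which rest on Lemma 2.2 of \cite{hsbt}). However, there is a genuine gap in your treatment of the image assertions in part (2), and it is the most technical part of the lemma.

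Your explanation of why $(\rbar\otimes\Ind_{G_M}^{G_{F^+}}\overline\theta)(G_{F^+(\zeta_l)})$ is big (given that $\rbar(G_{F^+(\zeta_l)})$ is $m$-big) is a hand-wave: ``the simple root condition for $m$-bigness feeds into the ordinary bigness of the tensor product because of the exponent-$m$ twisting.'' This does not actually explain the mechanism, and as stated it would not work for an arbitrary $\theta$ of the prescribed infinity type. In the paper's construction, the character $\theta$ is deliberately made ramified at an \emph{auxiliary} prime $\fQ$ of $M$, lying over a rational prime $q\neq l$ chosen so that $q$ splits completely in $M$, no prime of $T$ lies over $q$, $q$ is unramified in $\Favoid$, and $q-1>2n$; the construction then arranges $q\mid\#\theta(I_{\fQ})$ while $\theta$ is unramified at all other primes above $\fq$. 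This auxiliary ramification is precisely what is fed into Lemma 4.1.2 of \cite{BLGG}, whose conclusion delivers both the bigness of the tensor product image and the $\zeta_l$-avoidance statement. Your proposal uses the letter $q$ only for the ramification locus of the extension $M/F^+$ (which the paper handles differently, via the product $\chi=\chi_1|_{G_{F^+}}\chi_0$) and never introduces any ramification of $\theta$ away from $l$, so there is no evident reason the image conditions hold. Without the auxiliary prime and the appeal to Lemma 4.1.2 of \cite{BLGG} (or an equivalent direct argument), part (2) is not established.

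Two smaller points. Your claim that ``the Hodge--Tate weights only affect the wildly ramified part at $l$, which is trivial mod $l$'' is incorrect — the mod-$l$ cyclotomic character $\omega$ is a nontrivial tame character — so the congruence $\theta\equiv\theta'\pmod{l}$ cannot be dismissed this way; the paper's device of replacing the first candidate $\theta_0'$ by $\theta'=\theta_0'\cdot(\tilde\theta/\tilde{\theta_0'})$ (a finite-order Teichm\"uller twist) is needed, which you gesture at but misdiagnose. Also your $\zeta_l$-avoidance argument (``adjoining $\Ind\overline\theta$ enlarges the field by a cyclic group of order dividing $m$'') is heuristically in the right direction but is not a proof — the field cut out by $\ad(\rbar\otimes\Ind\overline\theta)$ is not simply the compositum you describe, and in the paper this too is an output of Lemma 4.1.2 of \cite{BLGG}.
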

\begin{proof} Throughout this proof, we will use $\bar{F}$ as a shorthand for $\overline{F^+}$.

 {\sl Step 1: Finding a suitable field $M$.} We claim that there
 exists a surjective character $\chi: \Gal(\bar{F}/F^+)\to
 \mu_{m}$ (where $\mu_{m}$ is the group of $m$-th
 roots of unity in $\Qbar^{\times}$) such that
 \begin{itemize}
 \item $\chi$ is unramified with $\chi(\Frob_v)=1$ at all places $v$ of $F^+$ above $l$.
 \item $\chi(c_v)=-1$ for each infinite place $v$ (where $c_v$ denotes
   a complex conjugation at $v$).
 \item $\bar{F}^{\ker \chi}$ is linearly disjoint from $\Favoid$ over $F^+$.
 \item $\chi$ is unramified at all primes of $T$.
 \end{itemize}
 We construct the character $\chi$ as follows. First, we find using
 weak approximation a totally negative element $\alpha\in (F^+)^\times$
 which is a $v$-adic unit and a quadratic residue mod $v$ for each
 prime $v$ of $F^+$ above $l$, and which is a $w$-adic unit for each 
 prime $w$ in $T$. Let $\chi_0$ be the quadratic 
 character associated to the extension we get by adjoining the square
 root of this element. Then:
\begin{itemize}
 \item $\chi_0$ is unramified with $\chi_0(\Frob_v)=1$ at all places $v$ of $F^+$ above $l$.
 \item $\chi_0(c_v)=-1$ for each infinite place $v$ (where $c_v$ denotes
   a complex conjugation at $v$).
 \item $\chi_0$ is unramified at all places in $T$.
\end{itemize}
Now choose a cyclic totally real extension $M_1/\Q$ of degree $m$
such that:
\begin{itemize}
\item $M_1/\Q$ is unramified at all the rational primes where
  $\bar{F}^{\ker \chi_0}\Favoid/\Q$ is ramified, and at all rational
  primes which lie below a prime of $T$.
\item $l$ splits completely in $M_1$.
\end{itemize}
Since $\bar{F}^{\ker \chi_0}\Favoid/\Q$ and $M_1/\Q$ ramify at disjoint sets of
primes, they are linearly disjoint, and we can find a rational prime
$p$ which splits completely in $\Favoid\bar{F}^{\ker
  \chi_0}$ but such that $\Frob_p$ generates $\Gal(M_1/\Q)$. Since
$M_1/\Q$ is cyclic, we may pick an isomorphism between $\Gal(M_1/\Q)$
and $\mu_{m}$, and we can think of $M_1$ as determining a character
$\chi_1:G_\Q \to \mu_{m}$ such that:
\begin{itemize}
\item $\chi_1$ is trivial on $G_{\Ql}$.
\item $\chi_1$ is trivial on complex conjugation.
\item $\chi_1(\Frob_p) = \zeta_{m}$, a primitive $m$th root of
  unity.
\item $\chi_1$ is unramified at all rational primes lying below primes of $T$.
\end{itemize}
Then, set $\chi=(\chi_1|_{G_{F^+}})\chi_0$. Note that this maps onto
$\mu_{m}$, even when we restrict to $G_{\Favoid}$ (since
$p$ splits completely in $\Favoid$ and if $\wp$ is a place
of $\Favoid$ over $p$, we have $\chi_0(\Frob_\wp)=1$ while
$\chi_1(\Frob_\wp)=\zeta_{m}$).
The remaining properties are clear.

 Having shown $\chi$ exists, we set $M=\bar{F}^{\ker \chi}$; note
 that this is a CM field, and a cyclic extension of $F^+$ of degree $m$. Write $\sigma_{M/F^+}$
 for a generator of $\Gal(M/F^+)$. Write $M^+$ for the maximal totally
 real subfield of $M$. Note also our choices have ensured that almost 
 all the points in conclusion 6 of the Lemma currently being proved will hold; all that
 remains from there to be checked is that $\theta$ and $\theta'$ are unramified
 at all primes above prime of $T$.

 \medskip{\sl Step 2: An auxiliary prime $q$}. Choose a rational prime
 $q$ such that
\begin{itemize}
\item no prime of $T$ lies above $q$,
\item $q \ne l$,
\item $q$ splits completely in $M$, 
\item $q$ is unramified in $\Favoid$, 
\item $q-1>2n$, and
\item $\eta$ and $\eta'$ are both unramified at all primes above $q$.
\end{itemize}
Also choose a prime $\fq$ of $F^+$ above $q$, and a prime $\fQ$ of M
above $\fq$.

\medskip{\sl Step 3: Defining certain algebraic characters $\phi$,
  $\phi'$}.  For each prime $v$ of $F^+$, choose a prime $\tilde{v}$
of $M$ lying above it. Then for each embedding $\tau$ of $F^+$ into
$\Qbar_l$, select an embedding $\tilde{\tau}$ of $M$ into $\Qbar_l$
extending it, in such a way that $\tilde{\tau}$ corresponds to the
prime $\tilde{v}$ if $\tau$ corresponds to $v$.
    
Note we now have a convenient notation for all the embeddings
extending $\tau$; in particular, they can be written as
$\tilde{\tau}\circ\sigma_{M/F^+}^j$ for $j=0,\dots, m-1$.

We are now forced into a slight notational ugliness. Write $\tilde{M}_0$
for the Galois closure of $M$ over $\Q$,
and $\tilde{M}$ for $\tilde{M}_0$ with the $\#\eta(G_{F^+})$th roots
of unity adjoined, so $\tilde{M}=\tilde{M}_0(\mu_{\#\eta(G_{F^+})})$.  
(Thus $\Gal(\tilde{M}/\Q)$
is in bijection with embeddings $\tilde{M}\to \Qbar$.) Let us fix
$\iota^*$, an embedding of $\tilde{M}$ into $\Qbar_l$, and write $v^*$ for the prime of $M$ below this.\footnote{The choice
  of this $\iota^*$ will affect the choice of the algebraic characters
  $\phi,\phi'$ below, but will be cancelled out---at least concerning
  the properties we care about---when we pass to the $l$-adic
  characters $\theta,\theta'$ below.}  Using $\iota^*$, we can and will abuse notation by 
  thinking of $\eta$ as being valued in $\tilde{M}$.
  Given any embedding $\iota'$ of $M$
into $\Qbar_l$, we can choose an element $\sigma_{\iota^*\leadsto\iota'}$ in
$\Gal(\tilde{M}/\Q)$ such that $\iota'= (\iota^*\circ\sigma_{\iota^*\leadsto\iota'})|_M$.

 We claim that there exists an extension $M'$ of $\tilde{M}$, and a character
 $\phi:\A_{M}^\times\to (M')^\times$ with open kernel such that:
 \begin{itemize}
 \item For $\alpha\in M^\times$,
   \[
    \phi(\alpha) = \prod_{\tau\in \Hom(F^+,\Qbar_l)} 
    \prod^{(m/2)-1}_{j=0} 
         (\sigma_{\iota^*\leadsto\tilde{\tau}\circ\sigma^{-j}_{M/F^+}}(\alpha))^{h_{j+1,\tau}}
         (\sigma_{\iota^*\leadsto\tilde{\tau}\circ\sigma_{M/F^+}^{-j-(m/2)}}(\alpha))^{h_{m-j,\tau}}.
   \]
\item For $\alpha\in(\A_{M^+})^\times$, we have 
  $$\phi(\alpha)=(\prod_{v\nmid\infty}
  |\alpha_v|\prod_{v|\infty}\sgn_v(\alpha_v))^{-w}\delta_{M/M^+}(
  \Art_{M^+}(\alpha))^{-w+(\gamma-1)/2}\eta|_{G_{M^+}}(\Art_{M^+}(\alpha)),$$
where $\delta_{M/M^+}$ is the quadratic character of $G_{M^+}$
  associated to $M$.
 (Note that, in the right hand side, we
  really think of $\alpha$ as an element of $\A_{M^+}$, not
  just as an element of $\A_{M}$ which happens to lie in
  $\A_{M^+}$; so for instance $v$ runs over places of $M^+$,
  and the local norms are appropriately normalized to reflect us
  thinking of them as places of $M^+$.)
\item If $\eta$ is unramified at $l$, then $\phi$ is unramified at $l$.
\item $\phi$ is unramified at all primes above primes of $T$.
\item $q|\#\phi(\mathcal{O}^\times_{M,\fQ})$, but $\phi$ is
  unramified at primes above $\fq$ other than $\fQ$ and $\fQ^c$
\end{itemize}
This is an immediate consequence of Lemma 2.2 of \cite{hsbt}, as
follows.
We must
verify 
that the formula for $\phi(\alpha)$ for $\alpha\in\A_{M^+}^\times$ in the
second bullet point is trivial on $-1\in M^+_v$ for each infinite place $v$
and that the conditions in the various bullet points are compatible.
The former is immediate, as may be verified by a direct calculation.
The only difficult part in checking that the bullet points are compatible
is comparing the first and second, which we may 
verify as follows. Suppose that $\alpha\in (M^+)^\times$; then we must
check that the expression for $\phi(\alpha)$ from the first bullet point 
($\phi_0(\alpha)$, say), matches that from the second ($\phi_1(\alpha)$, say).
It will suffice to show that $\iota^*(\phi_0(\alpha))=\iota^*(\phi_1(\alpha))$.

We have:
\begin{align*}
\iota^*(\phi_0(\alpha)) &= \prod_{\tau\in \Hom(F^+,\Qbar_l)} 
    \prod^{(m/2)-1}_{j=0} (\tilde{\tau}(\sigma^{-j}_{M/F^+}(\alpha)))^{h_{j+1,\tau}}
         (\tilde{\tau}(\sigma_{M/F^+}^{-j-(m/2)}(\alpha)))^{h_{m-j,\tau}}\\
\intertext{and then using the facts that $\sigma^{m/2}_{F^+/M}$ fixes $M^+$ and $h_{i,\tau} + h_{m+1-i,\tau}=w$, this}
         &= \prod_{\tau\in \Hom(F^+,\Qbar_l)} 
    \prod^{(m/2)-1}_{j=0} (\tilde{\tau}(\sigma^{-j}_{M/F^+}(\alpha)))^{h_{j+1,\tau}}
         (\tilde{\tau}(\sigma^{-j}_{M/F^+}(\alpha)))^{h_{m-j,\tau}} \\
         &= \prod_{\tau\in \Hom(F^+,\Qbar_l)} 
    \prod^{(m/2)-1}_{j=0} (\tilde{\tau}(\sigma^{-j}_{M/F^+}(\alpha)))^{w} \\ &= \prod_{\tau_M\in \Hom(M^+,\Qbar_l)} \tau_M(\alpha)^w\\
 \displaybreak[0]
 &=i_{\Q\to\Qlbar} (N_{M^+/\Q}(\alpha)^w)\\
    &=i_{\Q\to\Qlbar}\left(\prod_{\substack{v\text{ a finite}\\\text{place of $\Q$}}} |N_{M^+/\Q}(\alpha)|_v^{w} \sgn(N_{M^+/\Q}(\alpha))^{w}\right)\\
    &=i_{\Q\to\Qlbar}((\prod_{v\nmid\infty}
  |\alpha_v|\prod_{v|\infty}\sgn_v(\alpha_v))^{-w})
       \quad\quad\text{($v$ ranges over places of $M^+$)}\\
    &=i_{\Q\to\Qlbar}(\prod_{v\nmid\infty}
  |\alpha_v|\prod_{v|\infty}\sgn_v(\alpha_v))^{-w}\delta_{M/M^+}(
  \Art_{M^+}(\alpha))^{-w+(\gamma-1)/2}\eta(\Art_{M^+}(\alpha)),
\end{align*}
(where $i_{\Q\to\Qlbar}$ is the inclusion $\Q\into\Qlbar$) since
$\Art(\alpha)$ is trivial; this is as required.

Similarly, we construct a character $\phi':(\A_{M})^\times\to
(M')^\times$ (enlarging $M'$ if necessary) with open kernel such that:
 \begin{itemize}
 \item For $\alpha\in M^\times$,
   \[
    \phi'(\alpha) = \prod_{\tau\in \Hom(F^+,\Qbar_l)} 
    \prod^{(m/2)-1}_{j=0} 
         (\sigma_{\iota^*\leadsto\tilde{\tau}\circ\sigma^{-j}_{M/F^+}}(\alpha))^{h'_{j+1,\tau}}
         (\sigma_{\iota^*\leadsto\tilde{\tau}\circ\sigma^{-j-(m/2)}}(\alpha))^{h'_{m-j,\tau}}.
   \]
\item For $\alpha\in(\A_{M^+})^\times$, we have 
  $$\phi'(\alpha)=(\prod_{v\nmid\infty}
  |\alpha_v|\prod_{v|\infty}\sgn_v(\alpha_v))^{-w'}\delta_{M/M^+}(
  \Art_{M^+}(\alpha))^{-w'+(\gamma-1)/2}\eta'|_{G_{M^+}}(\Art_{M^+}(\alpha)).$$
  (Again, we think of $\alpha$ in the right
  hand side as a bona fide member of $\A_{M^+}$.)
\item $\phi'$ is unramified at all primes above primes of $T$.
\end{itemize}
Again, this follows from Lemma 2.2 of \cite{hsbt}.

 \medskip{\sl Step 4: Defining the characters
   $\theta,\theta'$}. Write $\tilde{M}'$ for the Galois
 closure of $M'$ over $\Q$, and extend $\iota^*:\tilde{M}\to\Qbar_l$
 to an embedding $\iota^{**}:\tilde{M}'\to\Qbar_l$.  Define $l$-adic
 characters $\theta,\theta_0':\Gal(\overline{M}/M)\to
 \overline{\Z}_l^\times$ by the following expressions (here $\alpha\in \A_M$,
 and if $\tilde{\tau}$ is a map $M\into\Qbar_l$, corresponding to a place $v(\tilde{\tau})$ of $M$,
 then $\tilde{\tau}(\alpha)$ is a shorthand for $\alpha_{v(\tilde{\tau})}$, mapped into $\Qbar_l$ via 
 the unique continuous extension of $\tilde{\tau}$ to a map $M_{v(\tilde{\tau})}\into \Qbar_l$):
 \begin{align*}
   \theta(\Art \alpha) &= \iota^{**}(\phi(\alpha)) \prod_{\tau\in\Hom(F^+,\Qbar_l)}
   \prod^{(m/2)-1}_{j=0} 
   (\tilde{\tau}\sigma^{-j}_{M/F^+})(\alpha)^{-h_{j+1,\tau}} 
   (\tilde{\tau}\sigma^{-j-m/2}_{M/F^+})(\alpha)^{-h_{m-j,\tau}} 
	\\	
   \theta'_0(\Art \alpha) &= \iota^{**}(\phi'(\alpha)) \prod_{\tau\in\Hom(F^+,\Qbar_l)}
   \prod^{(m/2)-1}_{j=0} 
   (\tilde{\tau}\sigma^{-j}_{M/F^+})(\alpha)^{-h'_{j+1,\tau}} 
   (\tilde{\tau}\sigma^{-j-m/2}_{M/F^+})(\alpha)^{-h'_{m-j,\tau}} 
 \end{align*}
where $v$ runs over places of $F$ dividing $l$. (It is easy to check that the expressions on the right hand sides are
 unaffected when $\alpha$ is multiplied by an element of $M^\times$.)
 Observe then that they enjoy the following properties:
 \begin{itemize}
 \item $\theta \circ V_{M/M^+} = \eps^{-w}\delta_{M/M^+}^{-w+(\gamma-1)/2}\eta|_{G_M^+}$ where $V_{M/M^+}$
     is the transfer map $G_{M^+}^{\ab} \rightarrow G_{M}^{\ab}$. In
     particular, $\theta\theta^c=\eps^{-w}\eta|_{G_M}$.
 \item $\theta'_0 \circ V_{M/M^+} = \eps^{-w'}\delta_{M/M^+}^{-w'+(\gamma-1)/2}\eta'|_{G_M^+}$
   and hence, $\theta'_0\theta'_0{}^c=\eps^{-w'} \eta'|_{G_M}$.
\item For $\tau$ an embedding of $F^+$ into $\Qbar_l$ and $0\le j\le (m/2)-1$, the Hodge-Tate
 weight of $\theta$ at the embedding $\tilde{\tau}\sigma^{-j}_{M/F^+}$ is $h_{j+1,\tau}$
 and the Hodge-Tate
 weight of $\theta$ at the embedding $\tilde{\tau}\sigma^{-j-m/2}_{M/F^+}$ is $h_{m-j,\tau}$.
\item For $\tau$ an embedding of $F^+$ into $\Qbar_l$ and $0\le j\le (m/2)-1$, the Hodge-Tate
 weight of $\theta'_0$ at the embedding $\tilde{\tau}\sigma^{-j}_{M/F^+}$ is $h'_{j+1,\tau}$
 and the Hodge-Tate
 weight of $\theta'_0$ at the embedding $\tilde{\tau}\sigma^{-j-m/2}_{M/F^+}$ is $h'_{m-j,\tau}$.
 \item $q|\#\theta(I_\fQ)$, but $\theta$ is unramified at all primes
   above $\fq$ except $\fQ,\fQ^c$.
 \end{itemize}
We now define $\theta'=\theta'_0(\tilde{\theta}/
\tilde{\theta'_0})$---where $\tilde{\theta}$ (resp $\tilde{\theta_0'}$)
denotes the Teichm\"uller lift of the reduction mod $l$ of $\theta$
(resp $\theta_0'$)---and observe that:
 \begin{itemize}
 \item $\theta'$ (mod $l$) = $\theta$ (mod $l$).
 \item $\theta'(\theta')^c=\eps^{-w'}\wtilde^{w'-w}\eta'|_{G_M}$.
\end{itemize}

\medskip{\sl Step 5: Properties of $\Ind_{G_M}^{G_{F^+}}\theta$ and $\Ind_{G_M}^{G_{F^+}}\theta'$.} 
We begin by addressing point 3. We define a pairing on $\Ind_{G_M}^{G_{F^+}}\theta$ 
by the formula
$$\langle \lambda,\lambda'\rangle = \sum_{\sigma\in\Gal(\overline{M}/M)\backslash 
                      \Gal(\overline{M}/F)} 
               \eps_l(\sigma)^{w} \eta(\sigma)^{-1}
                \lambda(\sigma)\lambda'(c\sigma)$$
where $c$ is any complex conjugation. One easily checks that this is
well defined and perfect, and that the properties (a) and (b) hold.

We can address point 4 in a similar manner, defining a pairing on $\Ind_{G_M}^{G_{F^+}}\theta'$
via:
$$\langle \lambda,\lambda'\rangle = \sum_{\sigma\in\Gal(\overline{M}/M)\backslash 
                      \Gal(\overline{M}/F)} 
               \eps_l(\sigma)^{w'} \wtilde(\sigma)^{w-w'} \eta'(\sigma)^{-1}
                \lambda(\sigma)\lambda'(c\sigma)$$
and checking the required properties.

Next, we address point 5. We will give the argument for $\Ind^{G_{F^+}}_{G_M} \theta$;
the argument for $\Ind^{G_{F^+}}_{G_M} \theta'$ is similar.
Since the primes of $F^+$ above $l$ split in $M$,
we immediately have that for each such prime $v$, 
$(\Ind^{G_{F^+}}_{G_M} \theta)|_{G_{F^+_v}}$ will split as required as a direct 
sum of characters $\chi^{(v)}_i$, with each character $\chi^{(v)}_i$ corresponding
to a prime of $M$ above $v$. We recall that we chose above
a prime $\tilde{v}$ of $M$ above $v$ for each such prime $v$;
we without loss of generality assume that the $\chi^{(v)}_i$ are
numbered so that for $0\leq i \leq (m/2)-1$, we have that
 $\chi^{(v)}_{i+1}$ corresponds to the prime $\sigma_{M/F^+}^{i}\tilde{v}$
and $\chi^{(v)}_{m-i}$ corresponds to the prime $\sigma_{M/F^+}^{i+m/2}\tilde{v}$.

Then for $\tau$ an embedding of $F^+_v\to \Qbar_l$, we have that
\begin{align*}
\HT_\tau(\chi^{(v)}_{i+1})&=\HT_{\tilde{\tau}\sigma_{M/F^+}^{-i}}(\theta)=h_{i+1,\tau}\\
\HT_\tau(\chi^{(v)}_{m-i})&=\HT_{\tilde{\tau}\sigma_{M/F^+}^{-i-m/2}}(\theta)=h_{m-i,\tau}
\end{align*}

 \medskip{\sl Step 6: Establishing the big image/avoid $\zeta_l$ properties}. All that
 remains is to prove the big image and avoiding $\zeta_l$ properties; that is, point (2). 
 We will just show the stated properties concerning
$\Ind_{G_M}^{G_{F^+}}\theta$; the statement for $\Ind_{G_M}^{G_{F^+}}\theta'$
then follows since $\theta$ and $\theta'$ are congruent.

Let $\rbar : G_{F^+}\to \GL_n(\Flbar)$ be a
continuous Galois representation such that
the following properties hold:
\begin{itemize}
\item $\rbar$ is ramified only at primes of $T$ and above $l$, and 
\item we have 
$\bar{F}^{\ker \bar{r}}(\zeta_l)\subset \Favoid$.
\end{itemize} We 
may now apply Lemma 4.1.2 of \cite{BLGG} (with $F$ in that lemma equal
to our current $F^+$). (To be completely precise, Lemma 4.1.2 as written
only applies to a characteristic 0 representation $r$ rather than the
characteristic $l$ representation $\rbar$ as we have here. But the proof 
goes through exactly the same if one starts with a characteristic $l$ representation.) 
 
If we assume that $\rbar(G_{F^+(\zeta_l)})$ is $m$-big, then applying part 2 of that lemma will give that 
$(\rbar \otimes\Ind_{G_M}^{G_{F^+}}\thetabar)|_{G_{F^+(\zeta_l)}}$ has big image,
(the first part of point (2) to be proved). Similarly, applying part 1 will give the fact
that we avoid $\zeta_l$ (the second part of point (2)).
 All that remains is to check the hypotheses
of Lemma 4.1.2 of \cite{BLGG}.

The fact that $M$ is linearly disjoint from $\overline{F}^{\ker \rbar}(\zeta_l)$
(common to both parts) comes from the fact that 
$\overline{F}^{\ker \rbar}(\zeta_l)\subset \Favoid$ and $M$ was
chosen to be linearly disjoint from $\Favoid$. The fact that every
place of $F^+$ above $l$ is unramified in $M$ follows from the
construction of $M$ (in fact, they all split completely).

We turn now to the particular hypotheses of the second part. That $\rbar|_{G_{F(\zeta_l)}}$ has $m$-big
image is by assumption.
The properties we require of $\fq$ follow directly from the bullet
points established in Step 2, the properties of $\rbar$ just above,
and the first and last bullet points 
(concerning $\theta\theta^c$ and $\#\theta(I_\fQ)$ respectively)
in the list of properties of $\theta$
given immediately after $\theta$ is introduced in step 4.
The fact that $\theta\theta^c$ can be extended to $G_{F^+}$ comes from the first bullet
point in the list of properties of $\theta$ in step 4, the fact that $\eta$ is given as
a character of $G_{F^+}$, and the fact that the cyclotomic character obviously extends
in this way. 
 \end{proof}

\section{Lifting a Galois representation.}\label{sec:lifting}

\subsection{Notation} 

\subsubsection{The group $\mc{G}_n$}
\label{subsubsec: curly G_n}
Let $n$ be a positive integer and let $\mc{G}_n$ be the group scheme
over $\Spec \bb{Z}$ defined in section 2.1 of \cite{cht}, that is, the
semi-direct product of $\GL_n \times \GL_1$ by the group $\{1,j\}$
acting on $\GL_n \times \GL_1$ by $j(g,\mu)j^{-1} = (\mu
{}^tg^{-1},\mu)$. Let $\nu : \mc{G}_n \rightarrow \GL_1$ be the
homomorphism which sends $(g,\mu)$ to $\mu$ and $j$ to $-1$.

Let $F$ be an imaginary CM field with totally real subfield $F^+$ and
let $n$ be a positive integer. Fix a complex conjugation $c \in
G_{F^+}$ and let $\delta_{F/F^+}: G_{F^+}\to \{\pm 1\}$ be the
quadratic character associated to the extension $F/F^+$. If $R$ is a topological ring, then by Lemma 2.1.1 of
\cite{cht} there is a natural bijection between the set of continuous
homomorphisms $\rho : G_{F^+} \to \mc{G}_{n}(R)$ with
$\rho^{-1}(\GL_n(R)\times\GL_1(R))=G_F$ and the set of triples
$(r,\chi,\langle\ ,\ \rangle)$ where $r:G_F \to \GL_n(R)$ and $\chi :
G_{F^+} \to R^{\times}$ are continuous homomorphisms and $\langle\ ,\
\rangle : R^n \times R^n \to R$ is a perfect $R$-bilinear pairing such
that
\begin{itemize}
\item $\langle x,y\rangle = -\chi(c) \langle y,x\rangle$ for all $x,y
  \in R^n$, and
\item $\langle r(\sigma)x,r(c\sigma c)y\rangle = \chi(\sigma)\langle
  x,y\rangle$ for all $\sigma \in G_F$ and $x,y \in R^n$.
\end{itemize}
If $\rho$ corresponds to the triple $(r,\chi,\langle\ ,\ \rangle)$,
then $\chi = \nu \circ \rho$,  $\rho|_{G_F}=(r,\chi|_{G_F})$ and if we write
$\rho(c)=(A,-\chi(c))j$, then $\langle x,y\rangle = {}^{t}xA^{-1}y$ for
all $x,y \in R^n$. We say that $\rho$ extends $r$.

If $R$ is a ring and $\rho : G_{F^+} \to \mc{G}_n(R)$ a homomorphism
with $G_F = \rho^{-1}(\GL_n(R)\times \GL_1(R)$ and $(r,\chi,\langle\
,\ \rangle)$ is the corresponding triple, we will often abuse
notation and write $\rho|_{G_F}$ for $r : G_F \to \GL_n(R)$.

\subsubsection{Oddness (CM case)} 
\label{subsubsec: oddness CM}
Let $k$ be a topological field and
let $r : G_F \to \GL_n(k)$ be a continuous representation.  We say
that $r$ is \emph{essentially conjugate-self-dual} (ECSD), if there
exists a continuous character $\mu : G_{F} \to k^\times$ such that
$r^c \cong r^\vee \mu$ and such that $\mu$ can be extended to a
character $\chi : G_{F^+} \to k^\times$ with $\chi(c_v)$ independent
of $v|\infty$ (where $c_v$ denotes a complex conjugation at a place
$v|\infty$). Note that if such an extension $\chi$ exists then there
is one other, namely $\chi \delta_{F/F^+}$, and hence there is one totally odd extension and one totally even extension.

Assume that $r: G_F \to \GL_n(k)$ is ECSD and let $\mu: G_{F} \rightarrow k^\times$ be as above. Let
$\langle\ ,\ \rangle : k^n \times k^n \to k$ be a perfect bilinear
pairing giving rise to the isomorphism $r^c \cong r^\vee \mu$ in the
sense that $\langle r(\sigma)x,r(c\sigma c)y\rangle =
\mu(\sigma)\langle x,y\rangle$ for all $\sigma \in G_F$ and $x,y \in
k^n$. We say that the triple $(r, \mu, \langle\ ,\ \rangle)$ is
\emph{odd} if $\langle\ ,\ \rangle$ is symmetric. If $r$ is absolutely
irreducible, then $\langle\ ,\ \rangle$ is unique up to scaling by
elements of $k^\times$. In this case we say that $(r,\mu)$ is odd if
$(r,\mu,\langle\ ,\ \rangle)$ is odd for one (hence any) choice of
$\langle\ ,\ \rangle$. If $\mu$ is clear from the
context, we will sometimes just say that $r$ is odd if $(r,\mu)$ is
odd.

Let $r$ and $\mu$ be as in the previous paragraph and assume that $r$
is absolutely irreducible. Then there is a bilinear pairing $\langle\
,\ \rangle: k^n \times k^n \to k$, unique up to scaling, giving rise
to the isomorphism $r^c \cong r^\vee \mu$. This pairing satisfies
$\langle x,y \rangle = (-1)^a \langle y, x \rangle$ for some $a \in
\bb{Z}/2\bb{Z}$. Let $\chi : G_{F^+} \to k^\times$ be the unique
extension of $\mu$ to $G_{F^+}$ with $\chi(c_v)=(-1)^{a+1}$ for all
$v|\infty$. Then the triple $(r,\chi,\langle\ ,\ \rangle)$ corresponds
to a continuous homomorphism $ G_{F^+} \to \mc{G}_n(k)$
extending $r$. Moreover, any other extension of $r$ to a homomorphism
$G_{F^+} \to \mc{G}_n(k)$ corresponds to a triple
$(r,\chi,\alpha\langle\ ,\ \rangle)$ for some $\alpha \in
k^\times$. In particular, $r$ is odd if and only if for one (hence
any) extension $\rho : G_{F^+} \to \mc{G}_n(k)$ of $r$, the character
$\chi = \nu \circ \rho$ is odd.

\subsubsection{Oddness (totally real case)} 
\label{subsubsec: oddness totally real}
Let $k$ be a topological
field and let $r : G_{F^+} \to \GL_n(k)$ be a continuous
representation. We say that $r$ is \emph{essentially-self-dual} (ESD),
if there exists a continuous character $\mu : G_{F^+} \to k^\times$
such that $r \cong r^\vee \mu$ and such that $\mu(c_v)$ is independent
of $v|\infty$.

Assume that $r: G_{F^+} \to \GL_n(k)$ is ESD and let $\mu: G_{F^+}
\rightarrow k^\times$ be as above. Assume also that the isomorphism $r
\cong  r^\vee \mu$ is realised by a perfect bilinear pairing $(\ ,\
):k^n \times k^n \to k$ satisfying $(r(\sigma)x,r(\sigma)y) =
\mu(\sigma)(x,y)$ for all $\sigma \in G_{F^+}$ and $x,y \in k^n$. We
say that the triple $(r,\mu,(\ ,\ ))$ is \emph{odd} if either (i) $(\
,\ )$ is symmetric and $\mu$ is even, or (ii) $(\ ,\ )$ is alternating
and $\mu$ is odd.

We now explain the relation between oddness in the totally real case and oddness in the CM case. Let $(r, \mu, (\ ,\ ))$ be a triple as in the previous paragraph (not
assumed to be odd). Then $r|_{G_F}$ satisfies $(r|_{G_F})^c \cong
r|_{G_F} \cong (r|_{G_F})^\vee \mu|_{G_F}$. Moreover, let $J\in
M_n(k)$ be the matrix with $(x,y)= {}^{t}x J y$ and define a new
pairing $\langle\ ,\ \rangle : k^n \times k^n \to k$ by $\langle
x,y\rangle = {}^tx J r(c)y$. Then $\langle r(\sigma)x,r(c\sigma
c)y\rangle = \mu(\sigma)\langle x,y \rangle$ for all $\sigma \in
G_{F^+}$ and $x,y \in k^n$ and hence $\langle\ ,\ \rangle$ realises
the isomorphism $(r|_{G_F})^c \cong (r|_{G_F})^\vee \mu|_{G_F}$. Note
that $\langle\ ,\ \rangle$ is symmetric if and only if ${}^tJ=\mu(c)J$
which occurs if and only (i) or (ii) above hold. In particular, we see
that $(r,\mu,(\ ,\ ))$ is odd if and only if
$(r|_{G_F},\mu|_{G_F},\langle\ ,\ \rangle)$ is odd.

Suppose $r$ and $\mu$ are as in the previous two paragraphs. If $r$ is
absolutely irreducible, then up to scaling there is a unique pairing
$(\ ,\ )$ giving rise to the isomorphism $r\cong r^\vee \mu$. This
pairing is either symmetric or alternating. We say that $(r,\mu)$ is
odd if $(r,\mu,(\ ,\ ))$ is odd. If $\mu$ is clear from the context we
will say that $r$ is odd if this holds.

Note that parts (3) and (4) of Lemma \ref{lem:char building} give rise
to odd triples.

\subsubsection{Standard basis for tensor products}
\label{subsubsec: standard basis for tensor products}
\begin{defn} \label{defn:inherited basis for otimes}
Let $R$ be a ring and let $V$ and $W$ be two finite free $R$-modules
of rank $n$ and $m$ respectively. Let $e_1,\ldots,e_n$ be an $R$-basis of
$V$ and $f_1,\ldots,f_m$ an $R$-basis of $W$. Let \emph{the basis of $V\otimes_R W$ inherited from
the bases $\{e_i\}$ and $\{f_j\}$} be the ordered $R$-basis of
$V \otimes_R W$ given by the vectors $e_i \otimes f_j$, ordered
lexicographically. Let $G$ be a group let
and $r : G \to \GL_n(R)$ and $s : G\to \GL_m(R)$ represent $R$-linear
actions of $G$ on $V$ and $W$ with respect to the bases $\{e_i\}$ and
$\{f_j\}$. Then we let $r\otimes s: G \to \GL_{nm}(R)$ denote the
action of $G$ on $V\otimes_R W$ with respect to the inherited basis of
$V\otimes_R W$.
\end{defn}

\subsubsection{Galois deformations}
\label{subsubsec: galois deformations}
Let $l$ be a prime number and fix an algebraic closure $\Qlbar$ of $\Ql$. Let $K$ be a finite extension of $\Ql$ inside $\Qlbar$ with ring of integers $\mc{O}$ and residue field $k$. Let $\mf{m}_{\mc{O}}$ denote the maximal ideal of $\mc{O}$. Let
$\mathcal{C}_\bigO$ be the category of complete local Noetherian $\bigO$-algebras
with residue field isomorphic to $k$ via the structural homomorphism. As in section 3 of \cite{BLGHT}, we consider an object $R$ of $\mc{C}_{\mc{O}}$ to be geometrically integral if for all finite extensions $K'/K$, the algebra $R\otimes_{\mc{O}}\mc{O}_{K'}$ is an integral domain.

\subsubsection*{Local lifting rings}
\label{subsubsec: local lifting rings}

Let $M$ be a finite extension of $\bb{Q}_p$ for some prime $p$
possibly equal to $l$ and let $\rhobar : G_M \rightarrow \GL_n(k)$ be
a continuous homomorphism.
Then the functor from $\mathcal{C}_\bigO$ to
$Sets$ which takes $A\in\mathcal{C}_\bigO$ to the set of continuous
liftings $\rho:G_M\to\GL_n(A)$ of $\rhobar$  is represented by
a complete local Noetherian $\bigO$-algebra $R^{\square}_{\rhobar}$. We call this
ring the universal $\mc{O}$-lifting ring of $\rhobar$.
 We write
$\rho^{\square}:G_M\to\GL_n(R^{\square}_{\rhobar})$ for the universal
lifting. 

Assume now that $p=l$ so that $M$ is a finite extension of
$\bb{Q}_l$. Assume also that $K$ contains the image of every embedding $M \into \overline{K}$.

\begin{defn} 
  Let $(\bb{Z}^{n}_{+})^{\Hom(M,K )}$ denote the subset of
  $(\bb{Z}^{n})^{\Hom(M,K )}$ consisting of elements $\lambda$ which
  satisfy
  \[ \lambda_{\tau,1} \geq \lambda_{\tau,2} \geq \ldots \geq
  \lambda_{\tau,n} \] for every embedding $\tau$.
\end{defn}

Let $a$ be an element of $(\bb{Z}^{n}_{+})^{\Hom(M,K)}$.  We
associate to $a$ an $l$-adic Hodge type $\mathbf{v}_{a}$
in the sense of section 2.6 of \cite{kisinpst} as follows. Let $D_{K}$
denote the vector space $K^{n}$. Let $D_{K,M}=D_{K} \otimes_{\bb{Q}_l}
M$. For each embedding $\tau : M \hookrightarrow K$, we let
$D_{K,\tau}=D_{K,M} \otimes_{K\otimes M,1\otimes \tau}K $ so that
$D_{K,M} = \oplus_{\tau} D_{K,\tau}$. For each $\tau$ choose a
decreasing filtration $\Fil^{i}D_{K,\tau}$ of $D_{K,\tau}$ so that
$\dim_{K} \gr^{i}D_{K,\tau} = 0 $ unless $ i =
(j-1)+a_{\tau,n-j+1}$ for some $j=1,\ldots,n$ in which case
$\dim_{K} \gr^{i}D_{K,\tau}=1$. We define a decreasing filtration of
$D_{K,M}$ by $K \otimes_{\bb{Q}_l} M$-submodules by setting
\[ \Fil^{i} D_{K,M} = \oplus_{\tau} \Fil^{i}D_{K,\tau}.\] Let
$\mathbf{v}_{a}= \{ D_{K}, \Fil^{i}D_{K,M} \}$. 

We now recall some results of Kisin. Let $a$ be an element of $(\bb{Z}^{n}_{+})^{\Hom(M,K)}$ and let
$\mathbf{v}_{a}= \{ D_{K}, \Fil^{i}D_{K,M} \}$ be the associated $l$-adic Hodge type.
\begin{defn}\label{defn: l-adic hodge type}
  If $B$ is a finite $K$-algebra and $V_{B}$ is a free $B$-module of
  rank $n$ with a continuous action of $G_{M}$ that makes $V_B$ into a
  de Rham representation, then we say that \emph{$V_B$ is of $l$-adic Hodge
  type $\mathbf{v}_{a}$} if for each $i$ there is an isomorphism
  of $B \otimes_{\bb{Q}_l} M$-modules
\[ \gr^{i}(V_{B} \otimes_{\bb{Q}_l} B_{dR})^{G_{M}}
\tilde{\rightarrow} (\gr^{i}D_{K,M}) \otimes_{K} B. \] 
\end{defn}

Corollary 2.7.7 of \cite{kisinpst} implies that there is a unique
$l$-torsion free quotient $R^{\mathbf{v}_{a},cr}_{\rhobar}$ of
$R^{\square}_{\rhobar}$ with the property that for any finite
$K$-algebra $B$, a homomorphism of $\mc{O}$-algebras $\zeta :
R^{\square}_{\rhobar}\rightarrow B$ factors through
$R^{\mathbf{v}_{a},cr}_{\rhobar}$ if and only if $\zeta \circ
\rho^{\square}$ is crystalline of $l$-adic Hodge type
$\mathbf{v}_{a}$. Moreover, Theorem 3.3.8 of \cite{kisinpst}
implies that $\Spec R^{\mathbf{v}_{a},cr}_{\rhobar}[1/l]$ is
formally smooth over $K$ and equidimensional of dimension $n^2 +
\frac{1}{2}n(n-1)[M:\bb{Q}_l]$.

\begin{defn}
\label{defn: sim}
Suppose that $\rho_1,\rho_2 : G_M\to\GL_n(\bigO)$ are two continuous lifts
of $\rhobar$. Then we say that
$\rho_1\sim \rho_2$ if the following hold.
\begin{enumerate}
\item There is an $a \in(\bb{Z}^{n}_{+})^{\Hom(M,K)}$
such that $\rho_1$ and $\rho_2$ both correspond to points of
$R^{\mathbf{v}_{a},cr}_{\rhobar}$ (that is,
$\rho_1\otimes_\bigO K$ and $\rho_2\otimes_\bigO K$ are both crystalline of $l$-adic Hodge
  type $\mathbf{v}_{a}$).
\item For every minimal prime ideal $\wp$ of
  $R^{\mathbf{v}_{a},cr}_{\rhobar}$, the quotient
  $R^{\mathbf{v}_{a},cr}_{\rhobar}/\wp$ is geometrically
  integral.
\item $\rho_1$ and $\rho_2$ give rise to closed points on a common irreducible
  component of  $\Spec R^{\mathbf{v}_{a},cr}_{\rhobar}[1/l]$.
\end{enumerate}
\end{defn}

In (3) above, note that the irreducible component is uniquely
determined by either of $\rho_1$, $\rho_2$ because $\Spec
R^{\mathbf{v}_{a},cr}_{\rhobar}[1/l]$ is formally smooth. Note
also that we can always ensure that (2) holds by replacing $\mc{O}$ with the
ring of integers in a finite extension of $K$.

\subsubsection*{Global deformation rings}
\label{subsubsec: global deformation rings}
Let $F/F^{+}$ be a totally imaginary quadratic extension of a totally
real field $F^+$. Let $c$ denote the non-trivial element of $\Gal(F/F^+)$. Assume that $K$ contains the image of every
embedding $F \into \Qlbar$ and that the prime $l$ is odd. Assume that
every place in $F^+$ dividing $l$ splits in $F$. Let $S$ denote a finite set of finite places of $F^+$ which split in
$F$, and assume that $S$ contains every place dividing $l$. Let $S_l$ denote the set of places of $F^+$ lying over $l$. Let $F(S)$
denote the maximal extension of $F$ unramified away from $S$. Let
$G_{F^{+},S}=\Gal(F(S)/F^{+})$ and $G_{F,S}=\Gal(F(S)/F)$. For each $v
\in S$ choose a place $\wt{v}$ of $F$ lying over $v$ and let $\wt{S}$
denote the set of $\wt{v}$ for $v \in S$. For each place $v|\infty$ of $F^+$ we let
$c_v$ denote a choice of a complex conjugation at $v$ in
$G_{F^+,S}$.  For each place $w$ of $F$
we have a $G_{F,S}$-conjugacy class of homomorphisms $G_{F_w}
\rightarrow G_{F,S}$. For $v \in S$ we fix a choice of homomorphism
$G_{F_{\wt{v}}} \rightarrow G_{F,S}$.

Fix a continuous homomorphism
\[ \rhobar : G_{F^+,S} \rightarrow \mc{G}_n(k) \] such that $G_{F,S} =
\rhobar^{-1}(\GL_n(k)\times \GL_1(k))$ and fix a continuous character
$\chi : G_{F^+,S}\rightarrow \mc{O}^{\times} $ such that $\nu \circ
\rhobar = \overline{\chi}$. Let $(\rbar,\chibar,\langle\ ,\ \rangle)$ be the triple corresponding to $\rhobar$ (see section \ref{subsubsec: curly G_n}).
Assume that $\rbar$ is absolutely
irreducible.
As in Definition 1.2.1 of \cite{cht}, we define
\begin{itemize}
\item a \emph{lifting} of $\rhobar$ to an object $A$ of
  $\mc{C}_{\mc{O}}$ to be a continuous homomorphism $\rho : G_{F^+,S}
  \rightarrow \mc{G}_n(A)$ lifting $\rhobar$ and with $\nu \circ \rho =
  \chi$;
\item two liftings $\rho$, $\rho^{\prime}$ of $\rhobar$ to $A$ to be
  \emph{equivalent} if they are conjugate by an element of
  $\ker(\GL_n(A)\rightarrow \GL_n(k))$;
\item a \emph{deformation} of $\rhobar$ to an object $A$ of
  $\mc{C}_{\mc{O}}$ to be an equivalence class of liftings.
\end{itemize}

For each place $v \in S$, let $R^{\square}_{\rbarwtv}$ denote
the universal $\mc{O}$-lifting ring of $\rbar|_{G_{F_{\wt{v}}}}$ and
let $R_{\wt{v}}$ denote a quotient of $R^{\square}_{\rbarwtv}$ which satisfies the
following property: 
\begin{itemize}
\item[(*)] let $A$ be an object of $\mc{C}_{\mc{O}}$ and let
$\zeta,\zeta^{\prime}: R^{\square}_{\rbarwtv}\rightarrow A$ be homomorphisms corresponding to
two lifts $r$ and $r^{\prime}$ of $\rbarwtv$ which are
conjugate by an element of $\ker(\GL_n(A)\rightarrow
\GL_n(k))$. Then $\zeta$ factors through $R_{\wt{v}}$ if and only if
$\zeta^{\prime}$ does.
\end{itemize}
We
consider the \emph{deformation problem}
\[ \mc{S} = (F/F^{+},S,\wt{S},\mc{O},\rhobar,\chi,\{ R_{\wt{v}}\}_{v \in
  S}) \] 
(see sections 2.2 and 2.3 of \cite{cht} for this terminology).
We say that a lifting $\rho : G_{F^+,S}\rightarrow \mc{G}_n(A)$ is
\emph{of type $\mc{S}$} if for each place $v \in S$, the homomorphism
$R^{\square}_{\rbarwtv} \rightarrow A$ corresponding to $\rho|_{G_{F_{\wt{v}}}}$ factors
through $R_{\wt{v}}$. We also define deformations of type $\mc{S}$ in the same way.
Let $\Def_{\mc{S}}$ be the functor $ \mc{C}_{\mc{O}}\rightarrow Sets$
which sends an algebra $A$ to the set of deformations of $\rbar$ to
$A$ of type $\mc{S}$. 
By Proposition 2.2.9 of
\cite{cht} this functor is represented by an object
$R_{\mc{S}}^{\univ}$ of $\mc{C}_{\mc{O}}$. The next lemma follows from Lemma 3.2.3 of \cite{GG}.

\begin{lem}
  \label{lem: property (*)}
  Let $M$ be a finite extension of $\bb{Q}_p$ for some prime $p$. Let
  $\rbar : G_M \rightarrow \GL_n(k)$ be a continuous
  homomorphism. If $p\neq l$, let $R$ be a quotient of the maximal $l$-torsion free
  quotient of $R^{\square}_{\rbar}$ corresponding to a union of
  irreducible components. If $p=l$,
  assume that $K$ contains the image of each embedding $M \into
  \Qlbar$ and
  let $R$ be a quotient of $R^{\mathbf{v}_{a},cr}_{\rbar}$, for some $a \in
  (\bb{Z}^n_+)^{\Hom(M,K)}$, corresponding to a union of irreducible components. Then $R$ satisfies property $(*)$ above.
\end{lem}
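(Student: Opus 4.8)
The plan is to reduce Lemma \ref{lem: property (*)} to the cited result Lemma 3.2.3 of \cite{GG} by checking that the hypotheses of that lemma are met in both the $p \neq l$ and $p = l$ cases. The key point of property $(*)$ is that membership in the subfunctor cut out by $R$ is invariant under conjugation by elements of $\ker(\GL_n(A) \to \GL_n(k))$; this is a purely deformation-theoretic statement about how $R$ sits inside $R^\square_{\rbar}$, and Lemma 3.2.3 of \cite{GG} is presumably formulated precisely to say that quotients of $R^\square_{\rbar}$ (or of suitable Hodge-theoretic quotients) corresponding to unions of irreducible components enjoy this invariance.

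First I would recall that conjugating a lift $r$ by an element $g \in \ker(\GL_n(A) \to \GL_n(k))$ gives a lift $r' = grg^{-1}$ of the same $\rbar$, and that the induced $\mc{O}$-algebra automorphism of $R^\square_{\rbar}$ corresponding to this operation is an automorphism of $R^\square_{\rbar}$ that reduces to the identity modulo $\mf{m}_{R^\square_{\rbar}}$ — equivalently, the two maps $\zeta, \zeta'$ differ by precomposition with such an automorphism. Since an automorphism of a complete local Noetherian ring permutes the minimal primes and preserves the set of irreducible components, any quotient $R$ of (the relevant torsion-free or crystalline quotient of) $R^\square_{\rbar}$ that is cut out by a union of irreducible components is stable under this automorphism. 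Therefore $\zeta$ factors through $R$ if and only if $\zeta'$ does, which is exactly property $(*)$. In the $p = l$ case one must additionally note that the crystalline condition of $l$-adic Hodge type $\mathbf{v}_a$ is itself conjugation-invariant (it only depends on the isomorphism class of $\zeta \circ \rho^\square$ over $B$), so that $R^{\mathbf{v}_a, cr}_{\rbar}$ is stable under the automorphism, and then the same irreducible-component argument applies; the formal smoothness of $\Spec R^{\mathbf{v}_a,cr}_{\rbar}[1/l]$ recorded above guarantees the irreducible components are well-behaved.

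The main obstacle — really the only subtlety — is making precise the claim that the conjugation operation induces a \emph{ring automorphism} of $R^\square_{\rbar}$ rather than merely a self-map, and that it is the identity modulo the maximal ideal; this is where one genuinely uses that $g$ lies in the kernel of reduction. Once that is in hand the argument is essentially formal. In practice, since the paper explicitly says ``The next lemma follows from Lemma 3.2.3 of \cite{GG},'' I would simply invoke that lemma, having first checked that its hypotheses (a quotient by a union of irreducible components of $R^\square_{\rbar}$ when $p \neq l$, resp. of $R^{\mathbf{v}_a,cr}_{\rbar}$ when $p = l$) coincide with the hypotheses stated here, and note for the $p=l$ case that $R^{\mathbf{v}_a,cr}_{\rbar}$ is the appropriate analogue of the $l$-torsion-free quotient used there. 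No separate display computations are needed, so there is no risk of the routine calculations growing out of hand.
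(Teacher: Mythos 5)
Your proposal is correct and matches the paper's approach exactly: the paper's entire justification is the sentence ``The next lemma follows from Lemma 3.2.3 of \cite{GG},'' and you likewise reduce to that citation after sketching the underlying mechanism. One small imprecision in the sketch worth noting: a fixed $g\in\ker(\GL_n(A)\to\GL_n(k))$ does not by itself induce an automorphism of $R^\square_{\rbar}$ (since $\zeta$ need not be surjective, $g$ need not lift to $R^\square_{\rbar}$); the precise form of the argument works with the action of the connected formal group $\widehat{\GL}_n$ on $\Spec R^\square_{\rbar}$, which fixes each irreducible component because the group is smooth and connected.
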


\subsection{Relative finiteness for deformation rings: restriction and tensor products}
\label{subsec: relative finiteness for deformation rings of tensor products}
Let $m$ and $n$ be positive integers, and let $l>mn$ be a rational
prime. Let $F \subset F'$ be imaginary CM fields with maximal totally
real subfields $F^+$ and $(F')^+$ respectively. Let $S$ be a finite
set of finite places of $F^+$ which split in $F$ and let $\tilde{S}$
be a set of places of $F$ consisting of exactly one place lying over
each place of $S$. Assume that each place of $F^+$ which lies over $l$
is in $S$ (and hence splits in $F$). Let $S'$ and $\tilde{S}'$ be the sets of places of $(F')^+$
and $F'$ which lie over $S$ and $\tilde{S}$ respectively. If $v$ is a
place in $S$ (respectively $S'$) we write $\tilde{v}$ for the unique
place of $\tilde{S}$ (respectively $\tilde{S}'$) lying over $v$. Fix a
choice of complex conjugation $c \in G_{(F')^{+}}$.

Let $K \subset \Qlbar$ be a finite extension of $\Ql$ with ring of
integers $\bigO$ and residue field $k$.  Assume that $K$ contains the
image of every embedding $F'\into\Qlbar$.

  Suppose that $\rbar:G_{F,S}\to\GL_n(k)$ and $r' : G_{F',S'} \to
  \GL_m(\mc{O})$ are continuous representations. Let $\rbar'' =
  \rbar|_{G_{F'}}\otimes_k \rbar' : G_{F',S'} \to \GL_{mn}(k)$ (we are
  using the conventions of Definition \ref{defn:inherited basis for
    otimes} to regard $\rbar''$ as a $\GL_{mn}(k)$-valued
  homomorphism). Suppose that $\chi : G_{F^+,S} \to \mc{O}^\times$,
  $\chi',\chi'':G_{(F')^+,S'}\to \mc{O}^\times$ are continuous characters,
  and assume that
\begin{enumerate}
  \item $\rbar$, $\rbar'$ and $\rbar''$ are absolutely irreducible.
  \item $\rbar^c\cong\rbar^\vee\chibar|_{G_F}$.
  \item $(r'\otimes_{\mc{O}}K)^c\cong
    (r'\otimes_{\mc{O}}K)^\vee\chi'|_{G_{F'}}$. 
 \item $\chi''|_{G_{F'}}=\chi|_{G_{F'}}\chi'|_{G_{F'}}$.
 \item $\chi,\chi'$ and $\chi''$ are each totally odd or totally even.
\end{enumerate}

The first and second assumptions imply that we can find a perfect 
bilinear pairing $\langle\ ,\ \rangle : k^n \times k^n \rightarrow k$,
unique up to scaling by elements of $k^\times$, such that $\langle
\rbar(\sigma)x,\rbar(c\sigma c)y\rangle =
\chibar(\sigma)\langle x,y\rangle$ for all $\sigma
\in G_F$, $x,y \in k^n$. We must have $\langle x,y\rangle = (-1)^a
\langle y,x\rangle$ for some $a \in \bb{Z}/2\bb{Z}$. 
Note that $r'\otimes_\bigO K$ is absolutely irreducible by the first
assumption. This and the third assumption then imply that we can find
a perfect symmetric bilinear pairing $\langle\ ,\ \rangle' : K^m
\times K^m \to K$, unique up to scaling by elements of $K^\times$,
with $\langle r'(\sigma)x,r'(c\sigma c)y\rangle' =
\chi'(\sigma)\langle x,y\rangle'$ for all $\sigma \in G_{F'}$, $x,y
\in K^m$. The first assumption implies that the dual lattice of
$\mc{O}^m$ under this pairing is $\lambda^a \mc{O}^m$ for some $a \in
\bb{Z}$. Replacing $\langle\ ,\ \rangle'$ by $\alpha \langle\ ,\
\rangle'$ where $\alpha \in K$ satisfies $\lambda^a = (\alpha)$, we
may assume that $\mc{O}^m$ is self-dual under $\langle\ ,\
\rangle'$. We necessarily have $\langle x,y\rangle' = (-1)^b \langle
y,x\rangle'$ for some $b\in \bb{Z}/2\bb{Z}$. We assume
\begin{enumerate}
\item[(6)] $\chi(c)=(-1)^{a+1}$, $\chi'(c)=(-1)^{b+1}$ and $\chi''(c)=(-1)^{a+b+1}$. 
\end{enumerate}

The discussion of section \ref{subsubsec: curly G_n} shows that the
triple $(\rbar,\chibar,\langle\ ,\ \rangle)$ corresponds to a
homomorphism $\rhobar:G_{F^+,S}\to\G_n(k)$ with
$\rhobar|_{G_{F}}=(\rbar,\chibar|_{G_F})$ and
$\nu\circ\rhobar=\chibar$. Similarly, the triple $(r,\chi',\langle\ ,\
\rangle')$ corresponds to a homomorphism $\rho':G_{(F')^+,S'} \to
\mc{G}_m(\mc{O})$ with $\rho'|_{G_{F'}}=(r,\chi'|_{G_{F'}})$ and $\nu
\circ \rho' = \chi'$.

Note that $(\rbar'')^c \cong
(\rbar'')^{\vee}\chibar|_{G_{F'}}\chibar'|_{G_{F'}}= (\rbar'')^\vee \chibar''|_{G_{F'}}$. The pairings $\langle\
,\ \rangle$ and $\langle\ ,\ \rangle'$ induce a perfect
bilinear pairing $\langle\ ,\ \rangle'': k^{mn}\times k^{mn} \to k$
with $\langle x\otimes y, w\otimes z\rangle '' = \langle x ,w\rangle
\langle y ,z \rangle'$ for all $x,w \in k^n$ and $y,z \in k^m$ (we are
identifying $k^n\otimes_k k^m$ with $k^{mn}$ as above). This pairing
satisfies $\langle \rbar''(\sigma)u ,\rbar''(c \sigma c)v\rangle'' =
\chibar''(\sigma)\langle u,v\rangle''$ for all
$\sigma \in G_{F'}$, $u,v \in k^{mn}$. Since $\langle
x,y\rangle''=(-1)^{a+b}\langle x,y\rangle''$ and
$\chi''(c)=(-1)^{a+b+1}$, we see that the triple
$(\rbar'',\chibar'',\langle\ ,\
\rangle'')$ corresponds to a homomorphism $\rhobar'': G_{(F')^+,S'} \to
\mc{G}_{mn}(k)$ with $\rhobar''|_{G_{F'}}=(\rbar'',\chibar|_{G_{F'}})$
and $\nu \circ \rhobar'' = \chibar''$.

We now turn to deformation theory. For each place $v \in S$
(respectively $w\in S'$), let $R^{\square}_{\rbar|_{G_{F_{\tv}}}}$
(respectively $R^{\square}_{\rbar''|_{G_{F'_{\tw}}}}$)
be the universal $\mc{O}$-lifting ring of $\rbar|_{G_{F_{\tv}}}$
(respectively $\rbar''|_{G_{F'_{\tw}}}$).
Consider the deformation problem
\[ \mc{S}_0 =
(F/F^{+},S,\wt{S},\mc{O},\rhobar,\chi,\{
R^{\square}_{\rbar|_{G_{F_{\tv}}}} \}_{v \in S}).\] 
See section
\ref{subsubsec: global deformation rings} for this notation.  Let
$R_{\mc{S}_0}^{\univ}$ be the $\mc{O}$-algebra representing the
corresponding deformation functor, and let
$\rho^{\univ}_{\mc{S}_0}:G_{F^+,S}\to\G_n(R^{\univ}_{\mc{S}_0})$ be a
representative of the universal deformation. Similarly, consider the deformation problem
\[ \mc{S}''_0 =
(F'/(F')^{+},S',\wt{S}',\mc{O},\rhobar'',\chi'',\{
R^{\square}_{\rbar''|_{G_{F'_{\tw}}}} \}_{w \in S'}). \] Let $R_{\mc{S}''_0}^{\univ}$ be the
$\mc{O}$-algebra representing the corresponding deformation functor,
and let
$\rho^{\univ}_{\mc{S}''_0}:G_{(F')^+,S'}\to\G_{mn}(R^{\univ}_{\mc{S}''_0})$
be a representative for the universal deformation.

Suppose we are given an object $A$ of $\mc{C}_{\mc{O}}$ and a
continuous homomorphism $\rho_A : G_{F^+,S}\to \mc{G}_n(A)$ lifting
$\rhobar$ and with $\nu \circ \rho_A =
\chi$. Let
$(r_A,\chi,\langle\ ,\ \rangle_A)$ be the
corresponding triple (see section \ref{subsubsec: curly
  G_n}). Identifying $A^n\otimes_A A^m$ with $A^{mn}$ as in Definition
\ref{defn:inherited basis for otimes}, we obtain a homomorphism
$r''_A :=(r_A|_{G_{F'}}) \otimes_A (r'\otimes_{\mc{O}}A) : G_{F'} \to
\GL_{mn}(A)$ and a perfect bilinear pairing $\langle\ ,\
\rangle_A'': A^{mn}\times A^{mn} \to A$ with $\langle x\otimes y,
w\otimes z\rangle_A '' = \langle x ,w\rangle_A \langle y ,z \rangle'$ for
all $x,w \in A^n$ and $y,z \in A^m$. This pairing satisfies $\langle
r''_A(\sigma)u ,r''_A(c \sigma c)v\rangle_A'' =
\chi''(\sigma)\langle u,v\rangle_A''$ for all $\sigma \in
G_{F'}$, $u,v \in A^{mn}$. The triple $(r''_A,\chi'',\langle\
,\ \rangle_A'')$ therefore gives rise to a homomorphism $\rho''_A:
G_{(F')^+,S'} \to \mc{G}_{mn}(A)$ lifting $\rhobar''$. We also denote
$\rho''_A$ by $(\rho_A|_{G_{(F')^+}})\otimes
(\rho'\otimes_{\mc{O}}A)$. Taking $A=R^{\univ}_{\mc{S}_0}$ and $\rho_A
= \rho^{\univ}_{\mc{S}_0}$, we obtain an
$\bigO$-algebra homomorphism \[\theta:R_{\mc{S}''_0}^{\univ}\to
R_{\mc{S}_0}^{\univ}\] with the property that the homomorphisms
$\theta\circ\rho^{\univ}_{\mc{S}''_0}$ and
$(\rho^{\univ}_{\mc{S}_0}|_{G_{(F')^+}})\otimes
(\rho'\otimes_{\bigO}R_{\mc{S}_0}^{\univ})$ are
$\ker(\GL_{mn}(R_{\mc{S}_0}^{\univ})\to\GL_{mn}(k))$-conjugate. 

\begin{lem}
  \label{lem:deformation ring is finite over one obtained by tensor}
Let 
\begin{itemize}
\item $\rbar : G_{F,S} \to \GL_n(k)$, $r' : G_{F',S'} \to
  \GL_m(\mc{O})$, $\rbar''=\rbar|_{G_{F'}} \otimes \rbar' : G_{F',S} \to \GL_{mn}(k)$
\item  $\chi : G_{F^+,S} \to \mc{O}^\times$,
  $\chi':G_{(F')^+,S'}\to \mc{O}^\times$, and $\chi'':G_{(F')^+,S'}\to \mc{O}^\times$
\end{itemize}
be continuous homomorphisms satisfying assumptions (1)-(6) above. Then
the homomorphism $\theta:R_{\mc{S}''_0}^{\univ}\to
R_{\mc{S}_0}^{\univ}$ constructed above is finite.
\end{lem}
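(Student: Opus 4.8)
The plan is to show that $R^{\univ}_{\mc{S}_0}$ is a module-finite $R^{\univ}_{\mc{S}''_0}$-algebra by the usual ``traces generate'' argument, but carried out carefully for the $\mc{G}_n$-valued setting. The standard fact (cf. the discussion in the introduction, and Lemma 2.1.12 of \cite{cht}) is that $R^{\univ}_{\mc{S}_0}$ is topologically generated over $\mc{O}$ by the traces $\tr \rho^{\univ}_{\mc{S}_0}(\sigma)$ for $\sigma\in G_{F,S}$ (together with the values of $\chi$, which already lie in $\mc{O}$). So it suffices to prove that each such trace is integral over the image of $\theta$, and in fact that the (closure of the) subring generated by the image of $\theta$ already contains all these traces. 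Since $\theta$ is a local homomorphism of complete local Noetherian rings, module-finiteness will then follow from the topological Nakayama lemma once we check finiteness mod $\mf{m}$.

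First I would record what the image of $\theta$ contains: by construction $\theta\circ\rho^{\univ}_{\mc{S}''_0}$ is conjugate to $(\rho^{\univ}_{\mc{S}_0}|_{G_{(F')^+}})\otimes(\rho'\otimes_{\mc{O}}R^{\univ}_{\mc{S}_0})$, so for every $\sigma\in G_{F',S'}$ the element $\tr\big((r^{\univ}_{\mc{S}_0}|_{G_{F'}})(\sigma)\big)\cdot\tr\big(r'(\sigma)\big)=\tr\big(r''_{\mc{S}_0}(\sigma)\big)$ lies in $\operatorname{im}(\theta)$. Since $r'$ is a \emph{fixed} representation valued in $\mc{O}$, the scalars $\tr r'(\sigma)\in\mc{O}$ are already in the base ring; the content is therefore to recover $\tr\big(r^{\univ}_{\mc{S}_0}(\sigma)\big)$ for $\sigma\in G_{F',S'}$, and then to bootstrap from $G_{F',S'}$ up to $G_{F,S}$.

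The key step — and the place where the hypothesis $l>mn$ and absolute irreducibility of $\rbar''$ enter — is the following. Because $\rbar'$ is absolutely irreducible, there is some $\sigma_0\in G_{F',S'}$ with $\tr \rbar'(\sigma_0)\neq 0$ in $k$; since $l\nmid \#k$-units involved and $l>mn\ge m$ one checks $\tr r'(\sigma_0)\in\mc{O}^\times$, so dividing out recovers $\tr\big(r^{\univ}_{\mc{S}_0}|_{G_{F'}}(\sigma)\big)$ for all $\sigma\in G_{F',S'}$ from elements of $\operatorname{im}(\theta)$. Next, to pass from $G_{F',S'}$ to $G_{F,S}$: the extension $F'/F$ is finite, and one uses the fact that $\rbar$ (hence the universal deformation) is absolutely irreducible together with a Frobenius/Brauer–Nesbitt style argument — for a suitable finite set of $\sigma\in G_{F',S'}$ one can write $\tr r^{\univ}_{\mc{S}_0}(\tau)$ for arbitrary $\tau\in G_{F,S}$ as an $\mc{O}$-linear combination (or more precisely a polynomial with $\mc{O}$-coefficients) of the $\tr r^{\univ}_{\mc{S}_0}(\sigma)$, because the $k$-algebra generated by $\rbar(G_{F',S'})$ inside $M_n(k)$ is all of $M_n(k)$ (irreducibility of $\rbar|_{G_{F'}}$, which follows from absolute irreducibility of $\rbar''=\rbar|_{G_{F'}}\otimes\rbar'$). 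This is exactly the kind of argument used to prove $R^{\univ,\tau}_{F^+}$ is finite over $R^{\univ,BT}_{F^+_1}$ in \cite{gee061}, adapted here to the tensor-product/restriction functoriality rather than plain restriction.

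**The main obstacle** I expect is not conceptual but bookkeeping: one must be careful that $\theta$ really is a \emph{local} $\mc{O}$-algebra homomorphism (so that topological Nakayama applies), that the local deformation conditions $\{R^{\square}_{\rbar|_{G_{F_{\tv}}}}\}$ versus $\{R^{\square}_{\rbar''|_{G_{F'_{\tw}}}}\}$ impose no extra obstruction — here they are the \emph{full} lifting rings, which is why $\mc{S}_0,\mc{S}''_0$ carry the subscript $0$, so this is automatic — and that the $\mc{G}_n$-formalism (the characters $\chi,\chi',\chi''$ and the pairings) is compatible throughout, which the long preamble before the lemma has been arranged precisely to guarantee. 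Concretely, I would: (i) invoke that $R^{\univ}_{\mc{S}_0}$ is topologically generated over $\mc{O}$ by $\{\tr r^{\univ}_{\mc{S}_0}(\sigma):\sigma\in G_{F,S}\}$; (ii) show $\operatorname{im}(\theta)$ contains $\tr r^{\univ}_{\mc{S}_0}(\sigma)$ for $\sigma\in G_{F',S'}$ via the unit-trace trick above; (iii) upgrade to all $\sigma\in G_{F,S}$ using absolute irreducibility and that $M_n(k)$ is spanned by $\rbar|_{G_{F'}}(G_{F',S'})$; (iv) conclude $R^{\univ}_{\mc{S}_0}$ is generated over $\operatorname{im}(\theta)$ by finitely many elements each integral over $\operatorname{im}(\theta)$, hence module-finite, hence $\theta$ is finite.
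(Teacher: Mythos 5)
There is a genuine gap in step (ii), and it is not a bookkeeping issue. The only elements you can exhibit in $\operatorname{im}(\theta)$ coming from traces of the universal $\mc{S}''_0$-deformation are products of the form $\tr\big(r^{\univ}_{\mc{S}_0}|_{G_{F'}}(\sigma)\big)\cdot \tr\big(r'(\sigma)\big)$, where the \emph{same} group element $\sigma$ appears in both factors. Your unit-trace trick lets you cancel $\tr r'(\sigma)$ only at those $\sigma$ for which $\tr r'(\sigma)\in\mc{O}^\times$; you have no way to recover $\tr\big(r^{\univ}_{\mc{S}_0}|_{G_{F'}}(\sigma)\big)$ when $\tr r'(\sigma)$ is a non-unit, and in particular when it vanishes in $k$. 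Absolute irreducibility of $\rbar'$ guarantees the existence of \emph{one} $\sigma_0$ with unit trace, not that all traces are units, so step (ii) does not produce the claimed elements of $\operatorname{im}(\theta)$, and step (iii) then has nothing to bootstrap from. The phrase ``dividing out recovers $\tr\big(r^{\univ}_{\mc{S}_0}|_{G_{F'}}(\sigma)\big)$ for all $\sigma$'' is simply not justified.

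The paper's actual proof never tries to show that $\operatorname{im}(\theta)$ contains (or integrally generates) the individual traces. Instead it reduces, via the topological Nakayama lemma, to showing that the fibre ring $\Rbar := \big(R^{\univ}_{\mc{S}_0}/\theta(\mf{m}_{R^{\univ}_{\mc{S}''_0}})\big)/\wp$ is a finite $k$-algebra for every prime $\wp$. The point is that modulo $\theta(\mf{m}_{R^{\univ}_{\mc{S}''_0}})$ the composite $\theta\circ\rho^{\univ}_{\mc{S}''_0}$ collapses to $\rhobar''$ (up to $\ker(\GL_{mn}\to\GL_{mn}(k))$-conjugacy), so the representation $(\rho_{\Rbar}|_{G_{(F')^+}})\otimes(\rhobar'\otimes_k\Rbar)$ has \emph{finite image}; since $\rbar'$ itself has finite image, one deduces that $r_{\Rbar}|_{G_{F'}}$ (and then $r_{\Rbar}$, as $[G_F:G_{F'}]<\infty$) has finite image, whence by Lemma 2.1.12 of \cite{cht} the ring $\Rbar$ is topologically generated over $k$ by traces that are sums of roots of unity of bounded order, and is therefore finite over $k$. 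This sidesteps entirely the untenable claim that one can decouple the two factors in $\tr r^{\univ}(\sigma)\cdot\tr r'(\sigma)$. If you want to salvage your strategy you would need to replace step (ii) with this finite-image observation on the fibre; the trace-division trick as you have written it cannot be made to work.
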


\begin{proof}
  Let $\wp$ denote a prime ideal of
  $R_{\mc{S}_0}^{\univ}/\theta(\mf{m}_{R^{\univ}_{\mc{S}''_0}})$ and let
  $\Rbar =
  (R_{\mc{S}_0}^{\univ}/\theta(\mf{m}_{R^{\univ}_{\mc{S}''_0}}))/\wp$.  It
  suffices to show that $\Rbar$ is a finite $k$-algebra (for any
  choice of $\wp$), since if this holds then we see that
  $R_{\mc{S}_0}^{\univ}/\theta(\mf{m}_{R^{\univ}_{\mc{S}''_0}})$ is a
  Noetherian ring of dimension 0 and hence is Artinian. The result then
  follows from the topological version of Nakayama's lemma.

  Let $\rho_{\Rbar} =
  \rho^{\univ}_{\mc{S}_0}\otimes_{R_{\mc{S}_0}^{\univ}}\Rbar$ and
  $\rho''_{\Rbar}=(\rho_{\Rbar}|_{G_{(F')^+}}) \otimes (\rhobar'
  \otimes_{k} \Rbar)$. Note that $\rho_{\Rbar}''$ is
  $\ker(\GL_{mn}(\Rbar)\to \GL_{mn}(k))$ conjugate to $\rhobar''
  \otimes_k \Rbar$ and in particular has finite image. Let $r_{\Rbar}
  : G_F \to\GL_n(\Rbar)$ denote $\rho_{\Rbar}|_{G_{F}}$ composed with
  the projection $\GL_n(\Rbar)\times\GL_1(\Rbar) \to \GL_n(\Rbar)$.
  Then $r_{\Rbar}|_{G_{F'}} \otimes \rbar' : G_{F'} \to
  \GL_{mn}(\Rbar)$ has finite image. From this it follows easily that
  $r_{\Rbar}$ has finite image. However, Lemma 2.1.12 of \cite{cht}
  implies that $\Rbar$ is topologically generated as a $k$-algebra by
  $\tr(r_{\Rbar}(G_F))$. Since $r_{\Rbar}$ has finite image, we deduce
  that for each $\sigma \in G_F$, $\tr(r_{\Rbar}(\sigma))$ is a sum of
  roots of unity of bounded order. It follows that $\Rbar$ is a finite
  $k$-algebra, as required.
\end{proof}

We also consider refined deformation problems as follows. For each place $v \in S$
let $R_{\tv}$ be a quotient of $R^{\square}_{\rbar|_{G_{F_{\tv}}}}$
satisfying the condition (*) of section \ref{subsubsec: global
  deformation rings}. For each place $w \in S'$, let $R_{\tw}$ be a
quotient of $R^{\square}_{\rbar''|_{G_{F'_{\tw}}}}$ satisfying the
same property (*). Furthermore, if $v$ denotes the place of $S$ lying
under $w$, assume that given any object $A$ of $\mc{C}_{\mc{O}}$ and
any $\mc{O}$-algebra homomorphism $R_{\tv} \to A$ corresponding to a lift $r_A$
of $\rbar|_{G_{F_{\tv}}}$, the lift $r_A|_{G_{F'_{\tw}}} \otimes_A
(r'|_{G_{F'_{\tw}}}\otimes_{\mc{O}}A)$ (regarded as a homomorphism to
$\GL_{mn}(A)$ using the conventions of Definition \ref{subsubsec:
  global deformation rings}) of $\rbar''|_{G_{F'_{\tw}}}$ gives rise
to an $\mc{O}$-algebra homomorphism
$R^{\square}_{\rbar''|_{G_{F'_{\tw}}}}\to A$ which factors through
$R_{\tw}$. We let
\begin{eqnarray*}
 \mc{S} &=&
(F/F^{+},S,\wt{S},\mc{O},\rhobar,\chi,\{
R_{\tv} \}_{v \in S}) \\
  \mc{S}'' &=&
(F'/(F')^{+},S',\wt{S}',\mc{O},\rhobar'',\chi'',\{
R_{\tw} \}_{w \in S'})
\end{eqnarray*}
and let $R^{\univ}_{\mc{S}}$ and $R^{\univ}_{\mc{S}''}$ be 
objects representing the corresponding deformation problems.
The compatibility between the
rings $R_{\tv}$ and $R_{\tw}$ for $v \in S$, $w\in S'$ implies that
the map $\theta : R^{\univ}_{\mc{S}''_0} \to R^{\univ}_{\mc{S}_0}$
gives rise to a map \[ \overline{\theta}: R^{\univ}_{\mc{S}''}\to
R^{\univ}_{\mc{S}}.\] The following result follows immediately from \ref{lem:deformation ring is finite over
  one obtained by tensor}.

\begin{lem}
  \label{lem:refined deformation ring is finite over
  one obtained by tensor}
The map $ \overline{\theta}: R^{\univ}_{\mc{S}''}\to
R^{\univ}_{\mc{S}}$ is finite.
\end{lem}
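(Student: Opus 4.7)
The proof will be a short reduction to Lemma \ref{lem:deformation ring is finite over one obtained by tensor}. The plan is to fit the four rings $R^{\univ}_{\mc{S}_0}$, $R^{\univ}_{\mc{S}''_0}$, $R^{\univ}_{\mc{S}}$, $R^{\univ}_{\mc{S}''}$ into a commutative square and exploit the fact that $R^{\univ}_{\mc{S}}$ (resp.\ $R^{\univ}_{\mc{S}''}$) is a quotient of $R^{\univ}_{\mc{S}_0}$ (resp.\ $R^{\univ}_{\mc{S}''_0}$).

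First I would verify the commutativity of the diagram
\[
\begin{CD}
R^{\univ}_{\mc{S}''_0} @>\theta>> R^{\univ}_{\mc{S}_0} \\
@VVV @VVV \\
R^{\univ}_{\mc{S}''} @>\overline{\theta}>> R^{\univ}_{\mc{S}}
\end{CD}
\]
where the vertical arrows are the canonical surjections. The existence of $\overline{\theta}$ making this commute is exactly what the compatibility hypothesis between the $R_{\tv}$ and $R_{\tw}$ guarantees: if $\rho$ is a lifting of $\rhobar$ of type $\mc{S}$, then at each $w \in S'$ the tensor-product lifting $(\rho|_{G_{(F')^+}})\otimes (\rho'\otimes A)$ is, by hypothesis, of type $\mc{S}''$, so the universal property of $R^{\univ}_{\mc{S}''}$ gives the map $\overline{\theta}$ compatibly with $\theta$.

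Next, Lemma \ref{lem:deformation ring is finite over one obtained by tensor} asserts that $\theta$ is finite, i.e.\ $R^{\univ}_{\mc{S}_0}$ is a finite $R^{\univ}_{\mc{S}''_0}$-module. Since $R^{\univ}_{\mc{S}}$ is a quotient of $R^{\univ}_{\mc{S}_0}$, it is also finite as a module over $R^{\univ}_{\mc{S}''_0}$ via the composition $R^{\univ}_{\mc{S}''_0}\to R^{\univ}_{\mc{S}_0}\to R^{\univ}_{\mc{S}}$. By the commutativity above, this composition factors through the surjection $R^{\univ}_{\mc{S}''_0}\twoheadrightarrow R^{\univ}_{\mc{S}''}$, so the $R^{\univ}_{\mc{S}''_0}$-module structure on $R^{\univ}_{\mc{S}}$ is actually an $R^{\univ}_{\mc{S}''}$-module structure (given by $\overline{\theta}$). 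Therefore $R^{\univ}_{\mc{S}}$ is finite as an $R^{\univ}_{\mc{S}''}$-module, i.e.\ $\overline{\theta}$ is finite.

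There is no real obstacle here beyond bookkeeping; the substantive content (the trace argument using Lemma 2.1.12 of \cite{cht}) was already done in the proof of Lemma \ref{lem:deformation ring is finite over one obtained by tensor}, and this statement is a formal consequence.
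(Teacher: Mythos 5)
Your proof is correct and is exactly the deduction the paper has in mind: the paper simply states that the result "follows immediately" from Lemma \ref{lem:deformation ring is finite over one obtained by tensor}, and your commutative-square argument (finiteness of $\theta$ passes to the quotient $R^{\univ}_{\mc{S}}$, whose $R^{\univ}_{\mc{S}''_0}$-module structure factors through the surjection onto $R^{\univ}_{\mc{S}''}$) is the intended bookkeeping.
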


\subsection{Relative finiteness for deformation rings: induction}
\label{subsec: relative finiteness for deformation rings: induction}

The results of this section are not needed in this paper, but are used
in \cite{BLGGT}.

Let $n$ be a positive integer. Let $F_1$ and $F_2$ be imaginary
CM fields with $F_1^+ \subset F_2^+$. Let
$m:=[F_2^+:F_1^+]$. Assume also that every prime of $F_i^+$ above $l$
splits in $F_i$ for $i=1,2$. For $i=1,2$, let $S_i$ be a finite set of
finite places of $F^+_i$ which split in $F_i$ with every place above
$l$ being contained in $S_i$. Let $\wt{S}_i$ be a set of places of
$F_i$ consisting of exactly one place of $F_i$ lying over each place
of $S_i$. If $v\in S_i$, we denote by the $\wt{v}$ the unique place of
$\wt{S}_i$ lying over $v$. Assume that $S_1$ contains the restriction
to $F_1^+$ of every place in $S_2$. Assume also that $S_1$ contains
every prime of $F_1^+$ that ramifies in $F_2$. Fix a choice of
complex conjugation $c \in G_{F_2^+}$.

Let $K \subset \Qlbar$ be a finite extension of $\Ql$ with ring of
integers $\bigO$ and residue field $k$. Assume that $K$ contains the
image of every embedding $F_2 \into \Qlbar$.

Let
\begin{eqnarray*} 
\rbar_2 : G_{F_2,S_2} \to \GL_{n}(k) \\
\chi : G_{F_1^+,S_1} \to \mc{O}^\times
\end{eqnarray*}
be continuous representations and let
\[ \rbar_1 = \Ind_{G_{F_2}}^{G_{F_1^+}} \rbar_2 : G_{F_1^+,S_1} \to
\GL_{2mn}(k).\]
Assume that
\begin{enumerate}
\item $\rbar_1|_{G_{F_1}}$ and $\rbar_2$ are absolutely irreducible.
\item $\rbar_2^c \cong \rbar_2^\vee \chibar|_{G_{F_2}}$ 
\item $\chi$ is totally odd or totally even.
\end{enumerate}
Choose a perfect bilinear pairing $\psibar_2 : k^n \otimes
k^n \to k$ such that
\[ \psibar_2 ( \rbar_2(\sigma)x, \rbar_2(c\sigma c)y ) = \chibar(\sigma) \psibar_2(x,y) \]
for all $x,y \in k^n$ and $\sigma \in G_{F_2}$.
(Such a pairing exists and is unique up to scaling as $\rbar_2$ is absolutely
irreducible.) Assume further that
\begin{itemize}
\item[(4)] $\psibar_2(x,y) = -\chibar(c)\psibar_2(y,x)$ for all $x,y \in k^n$.
\end{itemize}
Then the triple $(\rbar_2,\chi|_{G_{F_2^+}},\psibar_2)$ gives rise to
a homomorphism $\rhobar_2 : G_{F_2^+,S_2} \to \mc{G}_n(k)$ with
$G_{F_2,S_2}=\rhobar_2^{-1}(\GL_n(k)\times \GL_1(k))$.

For each $v \in S_2$, let
$R^\square_{\rbar_2|_{G_{F_{2,\wt{v}}}}}$ denote the universal
$\mc{O}$-lifting ring of $\rbar_2|_{G_{F_{2,\wt{v}}}}$. Let
$\mc{S}_2$ denote the deformation problem
\[ \mc{S}_2 =
(F_2/F_2^+,S_2,\wt{S}_2,\mc{O},\rhobar_2,\chi|_{G_{F_2^+}},\{
R^\square_{\rbar_2|_{G_{F_{2,v}}}}\}). \]
Let $\rho_2^{\univ}: G_{F_2^+} \to \mc{G}_n(R^{\univ}_{\mc{S}_2})$
represent the universal deformation of type $\mc{S}_2$ and let
$(r_2^{\univ},\chi|_{G_{F_2^+}},\psi_2^{\univ})$ be the corresponding
triple. Define a pairing $\psi_1$ on $r_1:=\Ind_{G_{F_2}}^{G_{F_1^+}}
r^{\univ}_2$ by setting
\[ \psi_1(f,g)=\sum_{\sigma \in G_{F_2}\backslash G_{F_1^+}}
\chibar(\sigma)^{-1} \psibar_2(f(\sigma),g(c\sigma)) \] for all $f,g
\in r_1$.
This pairing  
is perfect and alternating and
satisfies
\[ \psi_1(r_1(\tau) f, r_1(\tau) g) =\chi(\tau)\psi_1(f,g)\]
for all $\tau \in G_{F_1^+}$ and $f,g \in
r_1$. By Lemma 2.1.2 of \cite{cht},
there is a continuous homomorphism $ \rho_1 : G_{F_1^+,S_1} \to \mc{G}_{2mn}(R^{\univ}_{\mc{S}_2})$
with
\begin{itemize}
\item $G_{F_1,S_1}=\rho_1^{-1}(\GL_n(R^{\univ}_{\mc{S}_2})\times \GL_1(R^{\univ}_{\mc{S}_2}))$;
\item $\rho_1|_{G_{F_1}}=(r_1|_{G_{F_1}},\chi|_{G_{F_1}})$;
\item $\rho_1(c)=(r_1(c)J^{-1},-\chi(c))j$, where $J \in
  M_{2mn}(R^{\univ}_{\mc{S}_2})$ is the matrix with $\psi_1(x,y)={}^txJy$;
\item $\nu \circ \rho_1 = \chi$.
\end{itemize}
Let $\rhobar_1 = \rho_1 \mod \mf{m}_{R^{\univ}_{\mc{S}_2}}  :G_{F_1^+}
\to \mc{G}_{2mn}(k)$. (Note that $\rhobar_1$ corresponds to the triple
$(\rbar_1|_{G_{F_1}},\chibar,\psibar_1')$ where
$\psibar_1'$ is the pairing associated to the matrix
$\overline{J}\rbar_1(c)$.) 

For each place $v \in S_1$, let 
$R^\square_{\rbar_1|_{G_{F_{1,\wt{v}}}}}$ denote the universal
$\mc{O}$-lifting ring of $\rbar_2|_{G_{F_{2,\wt{v}}}}$. Let
$\mc{S}_1$ denote the deformation problem
\[ \mc{S}_1 =
(F_1/F_1^+,S_1,\wt{S}_1,\mc{O},\rhobar_1,\{R^\square_{\rbar_1|_{G_{F_{1,v}}}}\}). \]
The lift $\rho_1$ of $\rhobar_1$ is of type $\mc{S}_1$, and hence
gives rise to a map $\iota: R^{\univ}_{\mc{S}_1} \to R^{\univ}_{\mc{S}_2}$
(note that $R^{\univ}_{\mc{S}_1}$ exists as $\rbar_1|_{G_{F_1}}$ is
absolutely irreducible).

\begin{lem}
  \label{lem: induction induces finite map on deformation rings}
Let 
\begin{itemize}
\item $\rbar_2 : G_{F_2,S_2} \to \GL_n(k)$,
\item $\rbar_1 := \Ind_{G_{F_2}}^{G_{F_1^+}}\rbar_2 : G_{F_1^+,S_1}
  \to \GL_{2mn}(k)$, and
\item $\chibar : G_{F_1^+} \to \mc{O}^\times$
\end{itemize}
be continuous homomorphisms satisfying assumptions (1)-(4) above. 
Then the homomorphism $\iota: R^{\univ}_{\mc{S}_1} \to R^{\univ}_{\mc{S}_2}$
constructed above is finite.
\end{lem}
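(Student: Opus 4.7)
The plan is to mimic the proof of Lemma~\ref{lem:deformation ring is finite over one obtained by tensor} very closely. By the topological Nakayama lemma, it will suffice to show that $R^{\univ}_{\mc{S}_2}/\iota(\mf{m}_{R^{\univ}_{\mc{S}_1}})$ is Artinian. To do this I will pick an arbitrary prime $\wp$ of this quotient and prove that the domain $\Rbar := (R^{\univ}_{\mc{S}_2}/\iota(\mf{m}_{R^{\univ}_{\mc{S}_1}}))/\wp$ is finite over $k$; if so, the quotient is a Noetherian ring of Krull dimension $0$ and hence Artinian.

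I write $r_{\Rbar}: G_{F_2,S_2} \to \GL_n(\Rbar)$ for the pushforward of $r_2^{\univ}$ along the map $R^{\univ}_{\mc{S}_2} \to \Rbar$. By the universal property used to build $\iota$, the pushforward of $\rho_1$ to $\Rbar$ is $\ker(\GL_{2mn}(\Rbar) \to \GL_{2mn}(k))$-conjugate to $\rhobar_1 \otimes_k \Rbar$, so in particular has finite image. Projecting to the $\GL_{2mn}$-factor and restricting from $G_{F_1^+}$ down to the open subgroup $G_{F_2}$, I obtain that $\bigl(\Ind_{G_{F_2}}^{G_{F_1^+}} r_{\Rbar}\bigr)\big|_{G_{F_2}}$ has finite image in $\GL_{2mn}(\Rbar)$.

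The next step is to apply the Mackey decomposition for the restriction of an induction back to the subgroup from which one induced: the trivial double coset in $G_{F_2} \backslash G_{F_1^+}/G_{F_2}$ contributes $r_{\Rbar}$ itself as a direct summand, so $r_{\Rbar}$ must also have finite image. Since assumption~(1) gives absolute irreducibility of $\rbar_2$, Lemma 2.1.12 of \cite{cht} then implies that $\Rbar$ is topologically generated as a $k$-algebra by the traces $\tr(r_{\Rbar}(\sigma))$ for $\sigma \in G_{F_2}$. Because $r_{\Rbar}$ has finite image, each such trace is a sum of a bounded number of roots of unity of bounded order, forcing $\Rbar$ to be a finite $k$-algebra.

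The main obstacle I anticipate is the passage from ``$\rho_1$ pushed forward to $\Rbar$ has finite image'' to ``$r_{\Rbar}$ has finite image''; this is exactly where the Mackey step enters, and care is needed to match the basis conventions between the induced representation $\Ind_{G_{F_2}}^{G_{F_1^+}} r_2^{\univ}$ and the fixed basis in which $\rhobar_1$ takes values in $\mc{G}_{2mn}(k)$. Once one verifies that these genuinely agree up to the allowed conjugation in the kernel of reduction modulo $\mf{m}_{\Rbar}$, the remainder of the argument is essentially identical to that of Lemma~\ref{lem:deformation ring is finite over one obtained by tensor}.
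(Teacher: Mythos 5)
Your proposal is correct and follows essentially the same route as the paper's proof: reduce to showing each quotient $\Rbar$ is a finite $k$-algebra, observe that the pushforward of $\rho_1$ is conjugate to $\rhobar_1\otimes_k\Rbar$ and hence $\Ind_{G_{F_2}}^{G_{F_1^+}}r_{2,\Rbar}$ has finite image, deduce that $r_{2,\Rbar}$ has finite image, and conclude via the trace-generation lemma. The only difference is cosmetic: where the paper restricts to $G_{F_1}$ and says the finiteness of the image of $r_{2,\Rbar}$ ``follows easily,'' you restrict to $G_{F_2}$ and make the deduction explicit via the trivial double coset in the Mackey decomposition, which is a perfectly valid way to fill in that step.
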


\begin{proof}
  Let $\wp$ denote a prime ideal of
  $R^{\univ}_{\mc{S}_2}/\iota(\mf{m}_{R^{\univ}_{\mc{S}_1}})$. As in
  the proof of Lemma \ref{lem:deformation ring is finite over one obtained by tensor}, it suffices to
  show that $\Rbar :=
  (R^{\univ}_{\mc{S}_2}/\iota(\mf{m}_{R^{\univ}_{\mc{S}_1}}))/\wp$ is a
  finite $k$-algebra. Let $r_{2,\Rbar}=r_2^{\univ}
  \otimes_{R^{\univ}_{\mc{S}_2}} \Rbar : G_{F_2} \to
  \GL_n(\Rbar)$. As in the proof of Lemma \ref{lem:deformation ring
    is finite over one obtained by tensor} again, it suffices to show that
  $r_{2,\Rbar}$ has finite image. However,
  $(\Ind_{G_{F_2}}^{G_{F_1^+}}r_{2,\Rbar})|_{G_{F_1}}: G_{F_1} \to
  \GL_{2mn}(\Rbar)$ is equivalent to $\rbar_1|_{G_{F_1}}\otimes_k
  \Rbar$ and hence has finite image. It follows easily that
  $r_{2,\Rbar}$ has finite image.
\end{proof}

\subsection{Finiteness of a deformation ring}
Let $F$ be a CM field with maximal totally real subfield $F^+$. Let
$n$ be a positive integer and $l>n$ an odd prime such that $\zeta_l \not
\in F$. Let $S$ be a set of places of $F^+$ which split in $F$, and
assume that $S$ contains all places of $F^+$ lying over $l$. Let
$\tilde{S}$ be a set of places of $F$ consisting of exactly one place
lying over each place of $S$. If $v$ is a place in $S$ we write
$\tilde{v}$ for the unique place of $\tilde{S}$ lying over $v$. Fix a
choice of complex conjugation $c \in G_{F^+}$.

Let $K\subset \Qlbar$ be a finite extension of $\Ql$ with ring of
integers $\bigO$ and residue field $k$.  Assume that $K$ contains the
image of every embedding $F\into\Qlbar$.

Suppose that $\rbar:G_F\to\GL_n(k)$ is a continuous absolutely irreducible
representation which is unramified at all places not lying over a
place in $S$. Suppose that $\chi:G_{F^+}\to \bigO^\times$ is a continuous totally odd crystalline character, unramified away from $S$, such that
 \[\rbar^c\cong\rbar^\vee\chibar|_{G_F}.\]
Assume also that $\rbar$ is odd in the sense of section
\ref{subsubsec: oddness CM}. Thus, there exists a non-degenerate
symmetric bilinear pairing $\langle\ ,\ \rangle : k^n \times k^n \to
k$ such that $\langle \rbar(\sigma)x,\rbar(c\sigma c)y\rangle =
\chibar(\sigma)\langle x,y\rangle$ for all $\sigma \in G_F$, $x,y \in
k^n$. The triple $(\rbar,\chibar,\langle\ ,\
\rangle)$ then corresponds to a continuous homomorphism $\rhobar :
G_{F^+} \to \mc{G}_n(k)$ (see section \ref{subsubsec: curly G_n}).

For each place $v\in S$ not dividing $l$, assume that for every minimal prime ideal $\wp$ of $R^\square_{\rbar|_{G_{F_\tv}}}$, the quotient $R^\square_{\rbar|_{G_{F_\tv}}}/\wp$ is geometrically integral. Note that this can be achieved by replacing $K$ by a finite extension. Let $R_{\tilde{v}}$ be one such irreducible component of $R^\square_{\rbar|_{G_{F_\tv}}}$ which is of characteristic 0. For each place $v\in \tilde{S}$ dividing $l$, fix an element
$a_{\tv}\in(\Z^n_+)^{\Hom(F_{\tv},\Qlbar)}$ with corresponding
$l$-adic Hodge type $\mathbf{v}_{a_\tv}$ and assume that the ring $R^{\mathbf{v}_{a_\tv},cr}_{\rbar|_{G_{F_\tv}}}$ is non-zero and moreover that for every minimal prime $\wp$, the irreducible component $R^{\mathbf{v}_{a_\tv},cr}_{\rbar|_{G_{F_\tv}}}/\wp$ is geometrically integral. Let $R_{\tilde{v}}$ be one of these irreducible components.
Consider the deformation problem
\[ \mc{S}=(F/F^+,S,\tilde{S},\mc{O},\rhobar,\chi,\{R_{\tilde{v}}\}_{v\in
  S})\]
(see section \ref{subsubsec: global deformation rings} for this
terminology) and let $R_{\mc{S}}^{\univ}$ represent the corresponding
deformation functor. Note that the rings $R_{\tilde{v}}$ satisfy
property (*) of section \ref{subsubsec: global deformation rings} by
Lemma \ref{lem: property (*)}.

\begin{lem}
\label{lem:deformation ring is finite over Zl because Hecke}
Suppose that
\begin{enumerate}[(i)]
\item $\rbar|_{G_{F(\zeta_l)}}$ has big image.
\item $\bar{F}^{\ker\ad\rbar}$ does not contain $\zeta_l$
\end{enumerate}
Suppose that there is a continuous representation
$r':G_F\to\GL_n(\bigO)$ lifting $\rbar$ such that:
\begin{enumerate}[(1)]
\item There is an isomorphism $\iota: \Qlbar \to \bb C$, a RAECSDC
  automorphic representation $\pi'$ of $\GL_n(\bb{A}_F)$ and a continuous
  character $\mu' : \bb{A}_{F^+}^\times/(F^+)^\times$ such that
  \begin{itemize}
  \item $r_{l,\iota}(\pi')\cong r'\otimes_\mc{O} \Qlbar$
  \item $(\pi')^c \cong (\pi')^\vee \otimes (\mu' \circ
    \mathbf{N}_{F/F^+}\circ \det)$
  \item $\chi':=\epsilon^{1-n}r_{l,\iota}(\mu')$ is a lift of
    $\chibar$. (Note that $(r')^c \cong (r')^{\vee}\chi'|_{G_F}$.)
  \item $\pi'$ is unramified above $l$ and outside the set of places
    of $F$ lying above $S$.
  \end{itemize}
\item For each place $v\in S$, $r'|_{G_{F_\tv}}$ corresponds to a
  closed point of $R_\tv[1/l]$. Moreover, if $v\nmid l$ then
  $r'|_{G_{F_{\tilde{v}}}}$ does not give rise to a closed point on any
  other irreducible component of $R^{\square}_{\rbar|_{G_{F_{\tilde{v}}}}}[1/l]$.
\end{enumerate}
Then the universal deformation ring $R_{\mc{S}}^{\univ}$ defined above
is finite over $\bigO$. Moreover $R^{\univ}_{\mc{S}}[1/l]$ is non-zero
and any $\Qlbar$-point of this ring
gives rise to a representation of $G_F$ which is automorphic of level
prime to $l$.   
\end{lem}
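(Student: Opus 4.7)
The strategy is a direct application of an automorphy lifting theorem for $\GL_n$ over CM fields (of the type proved in \cite{BLGG}), using the automorphic lift $r'$ from hypothesis (1) as a base point. Roughly, the goal is to identify $(R^{\univ}_{\mc{S}})^{\red}$ with a Hecke algebra $\bb{T}_{\m}$, whence finiteness, non-emptiness, and automorphy of $\Qlbar$-points all follow.

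More concretely, I would first transfer $\pi'$ to a (definite) unitary group via Labesse's base change and descent, and consider a Hecke algebra $\bb{T}$ acting on a space of algebraic automorphic forms whose level at each place $v \in S$ is tailored to the chosen component $R_{\tv}$ (via an appropriate type at $v \nmid l$ and crystalline conditions of Hodge type $\mathbf{v}_{a_{\tv}}$ at $v \mid l$). Let $\m \subset \bb{T}$ be the maximal ideal cut out by $\rbar$ and $\pi'$. The Taylor--Wiles--Kisin patching argument, applied using the big image and avoid-$\zeta_l$ hypotheses together with the oddness of $\chi$, and combined with the identification of the local components demanded by hypothesis (2), yields a surjection $R^{\univ}_{\mc{S}} \twoheadrightarrow \bb{T}_{\m}$ with nilpotent kernel. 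Since $\bb{T}_{\m}$ is a finite $\bigO$-algebra (as it acts faithfully on a finite-dimensional space of forms), $(R^{\univ}_{\mc{S}})^{\red}$, and hence $R^{\univ}_{\mc{S}}$ itself, is finite over $\bigO$. The $\bigO$-point corresponding to $r'$ shows that $R^{\univ}_{\mc{S}}[1/l]$ is non-zero, and any $\Qlbar$-point factors through $(R^{\univ}_{\mc{S}})^{\red} \cong \bb{T}_{\m}$, giving rise to an automorphic representation of level prime to $l$ (the crystalline conditions at places above $l$, together with the unramifiedness of $\pi'$ above $l$, ensuring this level condition).

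The principal obstacle will be matching the local deformation conditions with the level structure used to construct $\bb{T}$: hypothesis (2) of the lemma is exactly what guarantees that $r'$ gives a smooth point on precisely the chosen components (a single component at $v \nmid l$, a chosen component at $v \mid l$), and this is the input the patching method requires in order that the output ring be $R^{\univ}_{\mc{S}}$ itself rather than a proper quotient. A secondary concern is that one may need to enlarge $K$ (already anticipated in the statement via the geometric integrality assumption) and possibly make a preliminary solvable base change to put the data into the form required by the cited lifting theorem; if such a base change is made, one must verify that finiteness and automorphy descend back to $F^+$, which is standard given the absolute irreducibility of $\rbar$ and the big image assumption (which ensures $\rbar|_{G_{F'}}$ remains irreducible for any solvable totally real extension $F'/F^+$ disjoint from the appropriate splitting field).
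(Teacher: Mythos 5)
Your overall strategy (identify the reduced deformation ring with a Hecke algebra via Taylor--Wiles--Kisin patching, seeded by $\pi'$) is the same mechanism that underlies the paper's proof, but the paper does not re-run the patching argument: it quotes Proposition 3.6.3 of \cite{BLGG}, which is stated only for the ``RACSDC-normalized'' situation where the multiplier character is $\epsilon^{1-n}$ (i.e.\ $\mu'$ trivial), and the entire content of the proof is the reduction to that case. That reduction has two ingredients you have not supplied: a twist by carefully chosen algebraic characters $\psi,\psi'$ (produced by Lemmas 4.1.5 and 4.1.6 of \cite{cht}, with $\psi'|_{I_{F_{\tilde v}}}=\psi|_{I_{F_{\tilde v}}}$ and $\overline{\psi'}=\overline{\psi}$ so that the twist identifies the chosen local components), followed by a solvable totally real base change $L^+/F^+$ making $\psi,\psi'$ unramified outside $S$; one then uses the relative finiteness lemma for restriction (Lemma \ref{lem:deformation ring is finite over one obtained by tensor}) and solvable descent of automorphy to come back down.

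The concrete gap this papers over is the mismatch between $\chi$ and $\chi'$. The deformation problem $\mc{S}$ fixes the multiplier $\nu\circ\rho=\chi$, whereas $r'$ satisfies $(r')^c\cong (r')^\vee\chi'|_{G_F}$ with $\chi'=\epsilon^{1-n}r_{l,\iota}(\mu')$; these are both lifts of $\chibar$ but need not be equal. Consequently $r'$ need not be an $\mc{O}$-point of $R^{\univ}_{\mc{S}}$ at all, so your argument for $R^{\univ}_{\mc{S}}[1/l]\neq 0$ fails; the paper instead gets non-emptiness from the Galois-cohomological lower bound $\dim R^{\univ}_{\mc{S}}\geq 1$ (Lemma 3.2.4 of \cite{GG}) combined with finiteness over $\mc{O}$. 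The same mismatch infects the patching step itself: the Hecke module you patch must carry Galois representations extending to $\mc{G}_n$ with multiplier exactly $\chi$, and your only witness that this module is non-zero and lands on the prescribed local components is $\pi'$, whose associated extension has multiplier $\chi'$. Until you twist $r'$ (and the deformation problem) so that the two multipliers agree --- and check, as the paper does via the condition $(\psi'/\psi)|_{G_{F_{\tilde v}}}$ unramified with trivial reduction, that the twist carries $R_{\tilde v}$ to the component containing the twisted $r'|_{G_{F_{\tilde v}}}$ and no other --- the surjection $R^{\univ}_{\mc{S}}\twoheadrightarrow\bb{T}_{\mf{m}}$ you assert is not available.
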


\begin{proof}
When $\mu'$ is trivial, the result follows from
Proposition 3.6.3 of \cite{BLGG}. We essentially reduce to this case by a 
twisting argument:

{\sl Step 1: Twisting.} Choose a non-degenerate symmetric bilinear pairing $\langle\ ,\
\rangle : \mc{O}^n \times \mc{O}^n \to \mc{O}$ lifting the pairing
also denoted $\langle\ ,\ \rangle$ on $k^n$ and such that $\langle
r'(\sigma)x,r'(c\sigma c)y\rangle = \chi'(\sigma)\langle x ,y\rangle$
for all $\sigma \in G_F$ and $x,y \in \mc{O}^n$.

By Lemma 4.1.5 of \cite{cht}, extending $K$ if necessary, we can and
do choose a continuous algebraic character $\psi : G_F \to
\mc{O}^\times$ such that (i) $\psi \psi^c  =
(\chi^{-1}\epsilon^{1-n})|_{G_F}$, and (ii) $\psi$ is crystalline above
$l$. By Lemma 4.1.6 of
\cite{cht}, again extending $K$ if necessary, we can and do choose a
continuous algebraic character $\psi': G_F \to \mc{O}^\times$ such
that (i) $\psi'(\psi')^c = ((\chi')^{-1}\epsilon^{1-n})|_{G_F}$, (ii)
$\psi'$ is crystalline above $l$, (iii)
$\psi'|_{I_{F_{\tilde{v}}}}=\psi|_{I_{F_{\tv}}}$ for each 
$\tv\in\tilde{S}$, and (iv) $\psibar'=\psibar$.

Let $r'_{\psi'}= r'\otimes\psi'$.  Note that $\langle
r'_{\psi'}(\sigma)x,r'_{\psi'}(c\sigma c)y\rangle =
(\psi'(\psi')^c\chi')(\sigma)\langle x,y \rangle =
\epsilon^{1-n}(\sigma)\langle x,y\rangle$ for all $\sigma\in G_F$ and
$x,y \in \mc{O}^n$. Let $\rbar_{\psibar}=\rbar \otimes \psibar = \rbar'_{\psibar'}$. The triple
$(\rbar_{\psibar},\epsbar^{1-n}\delta_{F/F^+}^n,\langle\ ,\ \rangle)$
corresponds to a continuous homomorphism $\rhobar_{\psibar}: G_{F^+}
\to \mc{G}_{n}(k)$. The triple
$(r'_{\psi'},\eps^{1-n}\delta_{F/F^+}^n,\langle\ ,\ \rangle)$ gives
rise to a homomorphism $\rho'_{\psi'} : G_{F^+} \to  \mc{G}_n(\mc{O})$
lifting $\rhobar_{\psibar}$.

For each $v\in S$, twisting by $\psi|_{G_{F_{\tilde{v}}}}$ defines an isomorphism between
the lifting problems for $\rbar|_{G_{F_{\tilde{v}}}}$ and
$\rbar_{\psibar}|_{G_{F_{\tilde{v}}}}$. This gives
rise to isomorphisms
$R^{\square}_{\rbar_{\psibar}|_{G_{F_{\tilde{v}}}}} \isoto
R^{\square}_{\rbar|_{G_{F_{\tilde{v}}}}}$. For each $v\in S$, let $R_{\tilde{v},\psi}$ be
the irreducible quotient of
$R^{\square}_{\rbar_{\psibar}|_{G_{F_{\tilde{v}}}}}$ corresponding to
$R_{\tilde{v}}$ under this isomorphism. Note that
$r'_{\psi'}|_{G_{F_{\tv}}}$ gives rise to a closed point of
$\Spec R^{\square}_{\rbar_{\psibar}|_{G_{F_\tv}}}[1/l]$ contained in
$\Spec R_{\tv,\psi}[1/l]$ and no other irreducible component. (This
follows easily from the fact that $(\psi'/\psi)|_{G_{F_\tv}}$ is
unramified and has trivial reduction.)

Choose a finite solvable totally real extension $L^+/F^+$ such that,
if we set $L=FL^+$, then the following hold:
\begin{itemize}
\item $\psi|_{G_L}$ and $\psi'|_{G_L}$ are unramified at all places not lying above $S$.
\item Each place $v \in S$ splits completely in $L^+$.
\item $L$ is linearly disjoint from $\overline{F}^{\ker \rbar}(\zeta_l)$ over $F$.
\end{itemize}
Let $S_L$ and $\tilde{S}_L$ denote the set of places of $L^+$ and $L$
above $S$ and $\tilde{S}$ respectively. For each $w \in S_L$, let
$\tilde{w}$ denote the unique place of $\tilde{S}_L$ lying above
$w$. For $w \in S_L$ lying over $v \in S$, let $R_{\tilde{w},\psi}$
denote $R_{\tilde{v},\psi}$ considered as a quotient of
$R^{\square}_{\rbar_{\psibar}|_{G_{L_{\tilde{w}}}}}$.  Let
\[
\mc{S}_{L,\psibar}=(L/L^+,S_L,\tilde{S}_L,\mc{O},\rhobar_{\psibar}|_{G_{L^+}},\epsilon^{1-n}\delta_{L/L^+}^n,\{R_{\tilde{w},\psi}\}_{w\in
  S_L})\] and let $R^{\univ}_{\mc{S}_{L,\psibar}}$ represent the
corresponding deformation functor. For $w \in S_L$, let
$R_{\tilde{w}}$ denote $R_{\tilde{v}}$ regarded as a quotient of
$R^{\square}_{\rbar|_{G_{L_{\tilde{w}}}}}$. Let $\mc{S}_L$ denote the
restriction of the deformation problem $\mc{S}$ to $L/L^+$:
\[ \mc{S}_L = (L/L^+,S_L,\tilde{S}_L,\mc{O},\rhobar|_{G_{L^+}},\chi|_{G_{L^+}},\{R_{\tilde{w}}\}_{w\in S_L}).\]
Twisting by $\psi|_{G_L}$ defines an isomorphism between
the deformation problems associated to $\mc{S}_{L}$ and $\mc{S}_{L,\psibar}$
and hence we have an isomorphism $R^{\univ}_{\mc{S}_{L,\psibar}} \isoto
R^{\univ}_{\mc{S}_L}$.

 {\sl Step 2: Finishing off.} Proposition 3.6.3 of \cite{BLGG} implies that
$R^{\univ}_{\mc{S}_{L,\psibar}}$ is a finite $\mc{O}$-algebra and that
any $\Qlbar$-point of $R^{\univ}_{\mc{S}_{L,\psibar}}$ gives rise to a
representation which is automorphic of level prime to $l$. By Lemma \ref{lem:deformation ring is finite over
  one obtained by tensor}, the natural map
$R^{\univ}_{\mc{S}_{L}} \to R^{\univ}_{\mc{S}}$ (coming from
restriction to $G_{L^+}$) is finite.
It follows that
$R^{\univ}_{\mc{S}}$ is finite over $\mc{O}$ and that any
$\Qlbar$-point of $R^{\univ}_{\mc{S}}$ gives rise to a representation
$r: G_{F} \to \GL_n(\Zlbar)$ whose restriction to $G_L$ is automorphic
of level prime to $l$. Since $L/F$ is solvable, Lemma 1.4 of
\cite{BLGHT} implies that any such $r$ is automorphic. Moreover, since
each place of $F$ lying above $l$ splits completely in $L$, we see
that such an $r$ must be automorphic of level prime to $l$. Finally,
Lemma 3.2.4 of \cite{GG} shows that the Krull dimension of
$R^{\univ}_{\mc{S}}$ is at least 1. Since $R^{\univ}_{\mc{S}}$ is
finite over $\mc{O}$, we deduce that $R^{\univ}_{\mc{S}}[1/l] \neq 0$.
\end{proof}

\subsection{A lifting result}
\label{subsec: a lifting result}
Let $m$ and $n$ be positive integers and let $l>mn$ be an odd prime. Let $F$ be a CM field with
maximal totally real subfield $F^+$ 
Let $S_l$ be the set of places of $F^+$ which lie over $l$,
and assume that they all split in $F$. Let $\tilde{S}_l$ be a set of
places of $F$ consisting of exactly one place lying over each place of
$S_l$. Let $(F')^+/F^+$ be a finite  
extension of totally real
fields, let $F'=(F')^+F$, and let $S'_l$ (respectively $\tilde{S}'_l$)
be the set of places of $(F')^+$ (respectively $F'$) lying over places
in $S_l$ (respectively $\tilde{S}_l$). If $v$ is a place in $S_l$
(respectively $S'_l$) we write $\tilde{v}$ for the unique place of
$\tilde{S}_l$ (respectively $\tilde{S}'_l$) lying over $v$. Finally, assume
 that $\zeta_l \not \in F'$.

Let $K\subset \Qlbar$ be a
finite extension of $\Ql$ with ring of integers $\bigO$ and residue
field $k$. Assume
that $K$ contains the image of every embedding $F'\into\Qlbar$. 

Fix a continuous absolutely irreducible representation
$\rbar:G_F\to\GL_n(k)$.  For each place $v\in \tilde{S}_l$, fix an
element $a_{\tv}\in(\Z^n_+)^{\Hom(F_{\tv},\Qlbar)}$ with corresponding
$l$-adic Hodge type $\mathbf{v}_{a_\tv}$. Assume that the ring
$R^{\mathbf{v}_{a_\tv},cr}_{\rbar|_{G_{F_\tv}}}$ is non-zero and moreover that for every minimal prime $\wp$, the irreducible component $R^{\mathbf{v}_{a_\tv},cr}_{\rbar|_{G_{F_\tv}}}/\wp$ is geometrically integral (this can always be arranged by extending $K$).
Fix an irreducible component $R_{\tv}$ of
$R^{\mathbf{v}_{a_\tv},cr}_{\rbar|_{G_{F_\tv}}}$.

\begin{thm}
  \label{thm:existence of a lift with specified properties}
  Use the assumptions and notation established above. Suppose that
  there exist continuous crystalline totally odd characters $\chi :
  G_{F^+} \to \bigO^\times$, $\chi',\chi'' : G_{(F')^+} \to
  \bigO^\times$ and continuous representations $r':G_{F'} \to
  \GL_m(\bigO)$ and $r'' : G_{F'} \to \GL_{nm}(\bigO)$ such that:
  \begin{enumerate}
  \item $\rbar^c\cong\rbar^\vee\chibar|_{G_F}$ and $\rbar$ is odd.
  \item $\rbar,r',r'',\chi,\chi'$ and $\chi''$ are all unramified away
    from $l$.
  \item $(r')^c\cong (r')^\vee\chi'|_{G_{F'}}$.
  \item $r'$ is crystalline at all places of $F'$ lying over $l$.
  \item $r''$ is automorphic of level prime to $l$ and
    $(r'')^c\cong(r'')^\vee\chi''|_{G_{F'}}$. Note that $r''$ is
    necessarily crystalline at all places of $F'$ dividing
    $l$. 
  \item $\rbar''=\rbar|_{G_{F'}}\otimes_k \rbar'$ and
    $\chi''|_{G_{F'}}=\chi|_{G_{F'}}\chi'|_{G_{F'}}$.
  \item $\rbar''|_{G_{F'(\zeta_l)}}$ has big image.
  \item $\bar{F}^{\ker\ad\rbar''}$ does not contain $\zeta_l$.
  \item For all places $w\in S'_l$, lying over a place $v$ of $F^+$,
    if $\rho_v:G_{F_\tv}\to\GL_n(\bigO)$ is a lifting of
    $\rbar|_{G_{F_\tv}}$ corresponding to a closed point on $\Spec
    R_\tv[1/l]$, then $(\rho_v|_{G_{F'_{\tw}}})\otimes_{\mc{O}}
    r'|_{G_{F'_{\tw}}}\sim r''|_{G_{F'_\tw}}$ (see Definition \ref{defn: sim}, noting that we are using Definition \ref{defn:inherited basis for otimes} to regard the tensor product as a lift of $\rbar''$). Note that if this is
    true for one such $\rho_v$ then it is true for all choices of
    $\rho_v$.
  \end{enumerate}
  Then, after possibly extending $\mc{O}$, there is a continuous lifting $r:G_F\to\GL_n(\bigO)$ of $\rbar$
  such that:
  \begin{enumerate}[(a)]
  \item $r^c\cong r^\vee\chi|_{G_F}$.
  \item $r$ is unramified at all places of $F$ not lying over $l$.
  \item For each place $v\in\tilde{S}_l$, $r|_{G_{F_\tv}}$ corresponds
    to a closed point on $\Spec R_\tv[1/l]$.
  \item $r|_{G_{F'}}\otimes_{\mc{O}} r'$ is automorphic of level prime to $l$.
  \end{enumerate}
\end{thm}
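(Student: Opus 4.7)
The plan is to realise $r$ as a $\Qlbar$-point of a global deformation ring for $\rbar$, and to prove finiteness of this ring over $\mc{O}$ by relating it (via the tensor-product construction of section \ref{subsec: relative finiteness for deformation rings of tensor products}) to a deformation ring for $\rbar''$, whose finiteness in turn comes from the Hecke-algebra identification of Lemma \ref{lem:deformation ring is finite over Zl because Hecke} and the automorphy of $r''$.

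First I would set up the two deformation problems. Using oddness of $\rbar$ (assumption (1)) and a symmetric pairing realising $\rbar^c\cong \rbar^\vee\chibar|_{G_F}$, promote $\rbar$ to $\rhobar:G_{F^+}\to\mc{G}_n(k)$ with $\nu\circ\rhobar=\chibar$, and consider
\[ \mc{S}=(F/F^+,S_l,\tilde{S}_l,\mc{O},\rhobar,\chi,\{R_\tv\}_{v\in S_l}),\]
possibly enlarging $\mc{O}$ to arrange geometric integrality of each $R_\tv$. Assumption (7) gives that $\rbar''$ is absolutely irreducible, and tensoring pairings (together with the parity matching forced by $\chi,\chi',\chi''$ all being totally odd) promotes $\rbar''$ to $\rhobar'':G_{(F')^+}\to\mc{G}_{mn}(k)$. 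At each $w\in S'_l$ lying over $v\in S_l$, define $R_{\tilde w}$ to be the irreducible component of $R^{\mathbf{v}_{a''_{\tilde w}},\cris}_{\rbar''|_{G_{F'_{\tilde w}}}}$ carrying $r''|_{G_{F'_{\tilde w}}}$ (where $a''_{\tilde w}$ is the sum of the Hodge-Tate data for $a_\tv$ and $r'|_{G_{F'_{\tilde w}}}$), and set
\[ \mc{S}''=(F'/(F')^+,S'_l,\tilde{S}'_l,\mc{O},\rhobar'',\chi'',\{R_{\tilde w}\}_{w\in S'_l}).\]
Assumption (9) is exactly the compatibility required by section \ref{subsec: relative finiteness for deformation rings of tensor products}: any type-$\mc{S}$ lift $\rho_A$ of $\rhobar$ yields a type-$\mc{S}''$ lift $(\rho_A|_{G_{(F')^+}})\otimes (\rho'\otimes_{\mc{O}}A)$ of $\rhobar''$ (where $\rho'$ is the extension of $r'$ via its self-duality). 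Hence Lemma \ref{lem:refined deformation ring is finite over one obtained by tensor} yields a finite $\mc{O}$-algebra homomorphism $\overline\theta:R^{\univ}_{\mc{S}''}\to R^{\univ}_{\mc{S}}$.

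Next I would apply Lemma \ref{lem:deformation ring is finite over Zl because Hecke} to $\mc{S}''$, using $r''$ as the required automorphic lift: assumptions (7) and (8) supply big image and $\zeta_l$-avoidance, assumption (5) supplies the RAECSDC datum together with the character $\chi''$, and the component condition at places of $S'_l$ holds by the very definition of $R_{\tilde w}$. The conclusion is that $R^{\univ}_{\mc{S}''}$ is finite over $\mc{O}$ and every $\Qlbar$-point corresponds to a Galois representation that is automorphic of level prime to $l$. Combined with finiteness of $\overline\theta$, this forces $R^{\univ}_{\mc{S}}$ to be finite over $\mc{O}$; Lemma 3.2.4 of \cite{GG} then gives $\dim R^{\univ}_{\mc{S}}\ge 1$, so $R^{\univ}_{\mc{S}}[1/l]\neq 0$ and (after enlarging $\mc{O}$ if necessary) admits $\Qlbar$-points. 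Any such point yields a triple $(r,\chi|_{G_F},\langle\ ,\ \rangle)$, and the corresponding $r$ satisfies (a), (b) and (c) by construction; pushing the point through $\overline\theta$ gives a $\Qlbar$-point of $R^{\univ}_{\mc{S}''}$ whose associated Galois representation is conjugate to $r|_{G_{F'}}\otimes r'$, and the Hecke-algebra conclusion above then delivers (d). The main technical obstacle I foresee is the bookkeeping of oddness signs and pairings needed to ensure assumptions (1)–(6) of Lemma \ref{lem:deformation ring is finite over one obtained by tensor} are met simultaneously; once that is in place, the ring-theoretic finiteness is a formal consequence of the cited lemmas.
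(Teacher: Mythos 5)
Your proposal matches the paper's own proof essentially step for step: the same two deformation problems $\mc{S}$ and $\mc{S}''$, the finite map $R^{\univ}_{\mc{S}''}\to R^{\univ}_{\mc{S}}$ from Lemma \ref{lem:refined deformation ring is finite over one obtained by tensor}, finiteness of $R^{\univ}_{\mc{S}''}$ via Lemma \ref{lem:deformation ring is finite over Zl because Hecke} applied with $r''$, and the existence of a $\Qlbar$-point from Lemma 3.2.4 of \cite{GG}. The only cosmetic difference is that the paper reads off the Hodge type at $w\in S'_l$ from the weight of the automorphic representation underlying $r''$ rather than as a sum of local data, which amounts to the same thing by assumption (9).
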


\begin{proof}
  Choose a complex conjugation $c \in G_{(F')^+}$. We can choose a symmetric bilinear pairing $\langle\ ,\ \rangle :
  k^n \times k^n \to k$, unique up to scaling by elements of
  $k^\times$ such that $\langle \rbar(\sigma)x,\rbar(c\sigma c)\rangle
  = \chibar(\sigma)\langle x,y\rangle$ for all $\sigma \in G_F$, $x,y
  \in k^n$. The triple $(\rbar,\chibar,\langle\ ,\ \rangle)$ gives rise
  to a homomorphism $\rhobar : G_{F^+} \to \mc{G}_n(k)$. Let $\mc{S}$
  be the deformation problem
  \[  \mc{S}=(F/F^+,S_l,\tilde{S}_l,\mc{O},\rhobar,\chi,\{R_{\tv}\}_{v\in S_l})\]
  and let $R^{\univ}_{\mc{S}}$ be an object of $\mc{C}_{\mc{O}}$ representing the
  corresponding deformation functor.

  By a similar argument we can extend $r'$ to a continuous
  homomorphism $\rho':G_{(F')^+} \to \mc{G}_m(\mc{O})$ corresponding
  to a triple $(r',\chi',\langle\ ,\ \rangle')$. The pairings
  $\langle\ ,\ \rangle$ and $\langle\ ,\ \rangle'$ give rise to a
  non-degenerate symmetric pairing $\langle\ ,\ \rangle''$ on $k^{mn}$
  and the triple $(\rbar'',\chibar'',\langle\ ,\ \rangle'')$
  corresponds to a homomorphism $\rhobar'': G_{(F')^+} \to \mc{G}_{mn}(k)$.

  Let $a \in (\bb{Z}^n_+)^{\Hom(F',\bb{C})}_c$ be the weight of the
  automorphic representation $\pi$. For $w \in S'_l$, let $a_{\tw}$ be
  the element of $(\bb{Z}^n_+)^{\Hom(F'_{\tw},\Qlbar)}$ with
  $(a_{\tw})_{\tau}=a_{\iota\circ \tau|_{F'}}$ for each $\tau : F'_w
  \into \Qlbar$. Then $r''|_{G_{F'_{\tw}}}$ is crystalline of $l$-adic
  Hodge type $\mathbf{v}_{a_{\tw}}$.  Let $R_{\tw}$ be the irreducible
  component of $R^{\mathbf{v}_{a_{\tw}},cr}_{\rbar''|_{G_{F'_{\tw}}}}$
  determined by $r''|_{G_{F'_{\tw}}}$. Let
  \[ \mc{S}'' =(F'/(F')^+,S'_l,\tilde{S}'_l,\mc{O},\rhobar'',\chi'',\{R_{\tw}\}_{w\in S_l})\]
 and let $R^{\univ}_{\mc{S}''}$ be an object representing the
 deformation functor associated to $\mc{S}''$. 

 As in section \ref{subsec: relative finiteness for deformation rings
   of tensor products}, `restricting to $G_{(F')^+}$ and tensoring with $\rho'$' gives rise to a
 homomorphism $R^{\univ}_{\mc{S}''}\to R^{\univ}_{\mc{S}}$ which is
 finite by Lemma \ref{lem:refined deformation ring is finite over
  one obtained by tensor}. Lemma \ref{lem:deformation ring is finite over Zl because Hecke}
(applied to $\rbar''$ and
  $R^{\univ}_{\mc{S}''}$) implies that $R^{\univ}_{\mc{S}''}$ is
  finite over $\mc{O}$ and any $\Qlbar$-point of this ring gives rise
  to a lift $G_{F'} \to \GL_{mn}(\Zlbar)$ of $\rbar''$ which
  automorphic of level prime to $l$. It follows that
  $R^{\univ}_{\mc{S}}$ is finite over $\mc{O}$ and moreover that any
  $\Qlbar$-point of this ring gives rise to a lift $r : G_{F} \to
  \GL_n(\Zlbar)$ satisfying (a)-(d) of the statement of the
  theorem being proved. The existence of such a $\Qlbar$-point follows from
  Lemma 3.2.4 of \cite{GG} (which shows that $R^{\univ}_{\mc{S}}$ has
  Krull dimension at least 1) and the fact that $R^{\univ}_{\mc{S}}$
  is finite over $\mc{O}$.
\end{proof}

\section{Big image for $\GL_2$.}\label{sec:big image GL2}\subsection{}

In this section we elucidate the meaning of the condition ``2-big'' in
the special case of subgroups of $\GL_2(\Flbar)$ (see Definition
\ref{defn: m-big}).
We will give concrete
criteria under which the 2-big condition may be realised. We will prove
results both for subgroups containing $\SL_2(k')$ for some field, and
results for subgroups whose projective image is tetrahedral,
octahedral or icosahedral.

Before we do so, let us first state a few results which will be useful to us in the
sequel. These results are due to Snowden and Wiles for bigness (see 
\cite[\S2]{snowden2009bigness}), and the generalization to $m$-bigness are due to White
(see \cite[\S2, Propositions 2.1 and 2.2]{pjw}). 
\begin{prop}[Snowden-Wiles,White] \label{normal subgroups and bigness}
Let $k/\F_l$ be algebraic, and $m$ be an integer. 
If $G\subset\GL_2(k)$ has no $l$-power order quotients, and has a normal
subgroup $H$ which is $m$-big, then $G$ is $m$-big.
\end{prop}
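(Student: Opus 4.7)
The plan is to verify the four conditions of Definition \ref{defn: m-big} for $G$ one at a time, using the corresponding condition for $H$ together with the assumption that $G$ has no $l$-power order quotient. The first condition (no $l$-power order quotient) is given by hypothesis. The second condition is almost immediate: since $H \subseteq G$, we have $\mathfrak{sl}_2(k)^G \subseteq \mathfrak{sl}_2(k)^H = 0$.

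For the vanishing of $H^1(G,\mathfrak{sl}_2(k))$, I would invoke the inflation-restriction sequence associated to the normal subgroup $H \trianglelefteq G$:
$$0 \To H^1(G/H,\mathfrak{sl}_2(k)^H) \To H^1(G,\mathfrak{sl}_2(k)) \To H^1(H,\mathfrak{sl}_2(k))^{G/H}.$$
The leftmost group vanishes because $\mathfrak{sl}_2(k)^H=0$ by the second condition of the $m$-bigness of $H$, and the rightmost group vanishes because $H^1(H,\mathfrak{sl}_2(k))=0$ by the third condition of the $m$-bigness of $H$. Hence $H^1(G,\mathfrak{sl}_2(k))=0$.

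The final, more interesting condition is the fourth. Given an irreducible $k[G]$-submodule $W$ of $\mathfrak{gl}_2(k)$, I would pass to any nonzero irreducible $k[H]$-submodule $W' \subseteq W$ (which exists by finite-dimensionality: take a nonzero $k[H]$-submodule of minimal dimension). Then $m$-bigness of $H$, applied to $W'$, produces some $h \in H$ and $\alpha \in k$ such that $\alpha$ is a simple root of the characteristic polynomial of $h$, $\alpha^m \neq \beta^m$ for every other root $\beta$, and $\pi_{h,\alpha} \circ W' \circ i_{h,\alpha} \neq 0$. Since $W' \subseteq W$, we obtain $\pi_{h,\alpha} \circ W \circ i_{h,\alpha} \neq 0$, and of course $h \in H \subseteq G$, so the same pair $(h,\alpha)$ witnesses the fourth condition for $G$ and $W$.

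There is really no substantive obstacle here: the whole content of the proposition is the observation that three of the four defining conditions of $m$-bigness pass upward along a normal inclusion essentially formally (with inflation-restriction the only nontrivial input, and only in condition three), while the fourth condition passes upward because any irreducible $k[G]$-submodule of $\mathfrak{gl}_2(k)$ contains an irreducible $k[H]$-submodule. The one place one must be slightly careful is that the extra hypothesis on $G$ (no $l$-power order quotient) is exactly the one condition which cannot be deduced from the corresponding condition on $H$, which is why it must be assumed separately.
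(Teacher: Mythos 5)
Your proof is correct and is essentially the standard argument from the cited sources (the paper itself does not prove this proposition but refers to Snowden--Wiles and White, whose proof proceeds exactly as you do: inflation--restriction for the $H^1$ condition, and passing to an irreducible $k[H]$-submodule of $W$ for the last condition). No issues.
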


\begin{prop}[Snowden-Wiles,White] \label{bigness and scalars}
Let $k/\F_l$ be algebraic, $m$ be an integer, and $G\subset\GL_2(k)$. Then $G$ is
$m$-big if and only if $k^\times G$ is.
\end{prop}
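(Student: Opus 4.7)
The plan is to verify each of the four conditions of Definition \ref{defn: m-big} separately, exploiting the fact that $k$ has characteristic $l$ and is algebraic over $\F_l$, so that $k^\times$ is a torsion abelian group of order prime to $l$. Let $Z := k^\times \cdot I \subset \GL_2(k)$ denote the scalar (central) subgroup; then both $G$ and $k^\times G$ have the same image $Q := Gk^\times/Z \cong G/(G \cap Z)$ in $\PGL_2(k)$, and this common quotient will drive each of the four equivalences.

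For condition (1), any surjection from $G$ or from $k^\times G$ onto a finite $l$-group must kill $G \cap Z$ respectively $Z$, since these are torsion groups of order prime to $l$; hence such a surjection factors through $Q$ in either case, and condition (1) for $G$, for $k^\times G$, and for $Q$ are all equivalent. For conditions (2) and (3), I will observe that the conjugation action of either $G$ or $k^\times G$ on $\mathfrak{sl}_2(k)$ factors through $Q$ because $Z$ is central. Applying the Hochschild--Serre spectral sequences for the extensions $1 \to G \cap Z \to G \to Q \to 1$ and $1 \to Z \to k^\times G \to Q \to 1$, together with the vanishing of the higher cohomology of a prime-to-$l$ torsion group acting trivially on a $k$-vector space, one obtains $H^i(G,\mathfrak{sl}_2(k)) = H^i(Q,\mathfrak{sl}_2(k)) = H^i(k^\times G,\mathfrak{sl}_2(k))$ for every $i \geq 0$, and in particular conditions (2) and (3) each transfer between $G$ and $k^\times G$.

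For condition (4), the irreducible $k[G]$-submodules and $k[k^\times G]$-submodules of $\mathfrak{gl}_2(k)$ coincide, again because the action factors through $Q$. Given such a $W$, any witness $(g,\beta) \in G \times k$ for $G$ is automatically a witness for $k^\times G$. Conversely, given a witness $(h,\alpha) \in k^\times G \times k$ for $k^\times G$, I will write $h = cg$ with $c \in k^\times$ and $g \in G$, and set $\beta := c^{-1}\alpha$: the eigenvalues of $g$ are precisely $c^{-1}$ times those of $h$, so $\beta$ is a simple root of the characteristic polynomial of $g$, the separation condition $\alpha^m \neq \gamma^m$ for the other root $\gamma$ of $h$ becomes $\beta^m \neq \delta^m$ for the other root $\delta$ of $g$, and the $\alpha$-eigenspace of $h$ coincides setwise with the $\beta$-eigenspace of $g$ inside $k^2$. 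Consequently $\pi_{h,\alpha} = \pi_{g,\beta}$ and $i_{h,\alpha} = i_{g,\beta}$, so the non-vanishing of $\pi_{h,\alpha}\circ W\circ i_{h,\alpha}$ is equivalent to that of $\pi_{g,\beta}\circ W\circ i_{g,\beta}$.

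The only real subtlety is the cohomological transfer in conditions (2) and (3), which relies on the vanishing of higher cohomology of prime-to-$l$ torsion groups on $k$-vector spaces; this is automatic, but if $G$ is allowed to be infinite one should be mindful of the group-cohomology conventions in force. No other step presents a significant obstacle.
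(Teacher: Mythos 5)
Your proof is correct. Note that the paper itself does not prove this proposition at all: it is quoted from \cite{snowden2009bigness} and \cite{pjw}, so there is no internal argument to compare against, and your direct verification is a self-contained substitute for that citation. All four transfers are handled properly: condition (1) via the observation that $k^\times$ (and hence $G\cap Z$ and $Z$) is torsion of order prime to $l$, so every $l$-power quotient factors through the common image $Q$ in $\PGL_2(k)$; conditions (2) and (3) via inflation--restriction for the central extensions by $G\cap Z$ and $Z$, using that these act trivially on $\mathfrak{sl}_2(k)$ and that $H^1$ of a prime-to-$l$ torsion group with trivial coefficients in an $l$-torsion module is $\Hom(N,M)=0$; and condition (4) via the rescaling $h=cg$, $\beta=c^{-1}\alpha$, under which eigenvalues scale by $c^{-1}$, the $m$-th power separation condition is preserved, and the eigenspace decompositions (hence $\pi_{h,\alpha}=\pi_{g,\beta}$ and $i_{h,\alpha}=i_{g,\beta}$) are literally identical. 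The one overstatement is the claim that $H^i(G,\mathfrak{sl}_2(k))\cong H^i(k^\times G,\mathfrak{sl}_2(k))$ for \emph{every} $i$: when $k=\Flbar$ the group $k^\times G$ is infinite, and vanishing of the higher cohomology of the infinite torsion group $Z$ would need an extra word. But only $i=0,1$ are relevant to the definition of $m$-big, and for those the five-term exact sequence together with $\Hom(Z,\mathfrak{sl}_2(k))=0$ suffices, exactly as you use it, so the argument stands.
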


\subsection{Subgroups containing $\SL_2(k')$.}
\begin{lem}
  \label{lem:SL2 and contained in SL2 implies 2-big}Let $l>3$ be prime, and let $k$ be an
  algebraic extension of $\Fl$. Let $k'$ be a finite subfield of $k$,
  and suppose that $H$ is a
  subgroup of $\GL_2(k)$ with
  \[\SL_2(k')\subset H\subset k^\times\GL_2(k').\]If the cardinality
  of $k'$ is greater than 5 then $H$ is 2-big.
\end{lem}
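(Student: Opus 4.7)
The plan is to reduce the statement to the special case $H = \SL_2(k')$ via Proposition~\ref{normal subgroups and bigness}. Since $\SL_2(k')$ is normal in $\GL_2(k')$ and is centralised by the scalars $k^\times$, it is normal in $k^\times \GL_2(k')$, and hence in $H$. Thus it suffices to verify (i) that $H$ has no nontrivial $l$-power order quotient, and (ii) that $\SL_2(k')$ is $2$-big.

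For (i), since $\SL_2(k')$ is perfect (as $|k'|>3$), the commutator subgroup $[H,H]$ contains $\SL_2(k')$ and so $H^{\ab}$ is a quotient of $H/\SL_2(k')$. The latter embeds into $k^\times \GL_2(k')/\SL_2(k')$, itself a quotient (via the determinant on the second factor) of $k^\times \times (k')^\times$. Now $k^\times$ has no $l$-torsion (there are no nontrivial $l$-power roots of unity in characteristic $l$), and $|(k')^\times| = q'-1$ is prime to $l$; hence every element of $H^{\ab}$ has order prime to $l$. This rules out any nontrivial $l$-power quotient of $H$, since every such quotient would contain a nontrivial abelian quotient.

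For (ii) I check the four clauses of Definition~\ref{defn: m-big} with $m=2$. The first holds because $\PSL_2(k')$ is non-abelian simple (as $|k'|>3$) and the centre $\{\pm I\}$ of $\SL_2(k')$ has order at most $2$, coprime to $l$. For the second, $\mathfrak{sl}_2(k) = \Sym^2(\mathrm{std}) \otimes_{\F_l} k$ is absolutely irreducible as a representation of the algebraic group $\SL_2$ (using $l>2$), hence irreducible as a $k[\SL_2(k')]$-module, so $H^0(\SL_2(k'), \mathfrak{sl}_2(k)) = 0$. The third, $H^1(\SL_2(k'), \mathfrak{sl}_2(k)) = 0$, is a standard modular cohomology vanishing in the regime $l>3$, $|k'|>5$; I would cite Cline--Parshall--Scott (or a similar reference) rather than reprove it. For the fourth, $l>2$ gives $\mathfrak{gl}_2(k) = \mathfrak{sl}_2(k) \oplus k\cdot I$ with both summands irreducible, so these are the only irreducible $k[\SL_2(k')]$-submodules to consider. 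The hypothesis $|k'|>5$ ensures $q'-1 \geq 6 > 4$, so there exists $\alpha \in (k')^\times$ of order $>4$. Take $h = \diag(\alpha, \alpha^{-1}) \in \SL_2(k')$: then $\alpha$ is a simple root of the characteristic polynomial of $h$ (as $\alpha^2 \neq 1$), satisfying $\alpha^2 \neq (\alpha^{-1})^2$ (as $\alpha^4 \neq 1$). For $W = k\cdot I$ the projection condition is obvious, and for $W = \mathfrak{sl}_2(k)$ the element $\diag(1,-1)$ fixes the $\alpha$-eigenspace $ke_1$ and acts there as the identity, showing $\pi_{h,\alpha} \circ \mathfrak{sl}_2(k) \circ i_{h,\alpha} \neq 0$.

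The main obstacle is the third clause, the $H^1$ vanishing; everything else is an elementary verification. The sharpness of the bound $|k'|>5$ is visible in the fourth clause: for $k' = \F_5$ one has $|(k')^\times| = 4$, so every element of $(k')^\times$ is a fourth root of unity and no diagonal $h \in \SL_2(\F_5)$ can have eigenvalue $\alpha$ with $\alpha^4 \neq 1$.
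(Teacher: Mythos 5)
Your proof is correct, and its crux --- the verification of the fourth (eigenvalue) condition using $h=\diag(\alpha,\alpha^{-1})$ with $\alpha^4\neq 1$, which is exactly where the hypothesis $\#k'>5$ and the passage from big to $2$-big enter --- is identical to the paper's. The only difference is organizational: the paper quotes Lemma 2.5.6 of \cite{cht} wholesale for the first three conditions of Definition \ref{defn: m-big}, whereas you reduce to $\SL_2(k')$ via Proposition \ref{normal subgroups and bigness} and verify everything directly except the $H^1$-vanishing, which you (like \cite{cht}) ultimately outsource to Cline--Parshall--Scott.
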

\begin{proof}All but the last condition in the definition follow at
  once from Lemma 2.5.6 of \cite{cht}. To check the last condition,
  take $h=\diag(\alpha,\alpha^{-1})$, where $\alpha\in k'$ satisfies
  $\alpha^4\ne 1$ (such an $\alpha$ exists because the cardinality of
  $k'$ is greater than 5). Then the roots of the characteristic
  polynomial of $h$ are $\alpha$ and $\alpha^{-1}$, and $\alpha^2\ne
  \alpha^{-2}$. Furthermore $\pi_{h,\alpha}\circ W\circ
  i_{h,\alpha}\ne 0$ for any irreducible $k[H]$-submodule $W$ of
  $\mathfrak{gl}_2(k)$, just as in the proof of Lemma 2.5.6 of
  \cite{cht}.\end{proof}

\begin{lem}\label{lem:Containing SL2 implies 2-big}Let $l>3$ be prime, and let $k$ be an
  algebraic extension of $\Fl$. Let $k'$ be a finite subfield of $k$,
  and suppose that $H$ is a finite
  subgroup of $\GL_2(k)$ with
  \[\SL_2(k')\subset H.\]If the cardinality
  of $k'$ is greater than 5 then $H$ is 2-big.
  \end{lem}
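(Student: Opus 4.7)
The plan is to reduce to Lemma \ref{lem:SL2 and contained in SL2 implies 2-big} by exhibiting a $2$-big normal subgroup of $H$ and invoking Proposition \ref{normal subgroups and bigness}.

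First, I will consider $H_0 := H \cap \SL_2(k)$, the kernel of $\det : H \to k^\times$; this is a normal subgroup of $H$ containing $\SL_2(k')$. I plan to invoke Dickson's classification of finite subgroups of $\SL_2(\overline{\F}_l)$: the possibilities are cyclic, dicyclic, one of the binary tetrahedral/octahedral/icosahedral groups, $\SL_2(k'')$ for some finite subfield $k'' \subseteq k$, or the preimage in $\SL_2(\overline{k})$ of $\PGL_2(k'')$ for such a $k''$. Since $H_0 \supseteq \SL_2(k')$ is a perfect nonabelian group of order at least $|\SL_2(\F_7)| = 336$, the cyclic, binary polyhedral, and dicyclic possibilities are ruled out (the last of these because every subgroup of a dicyclic group is cyclic or dicyclic, while $\SL_2(k')$ is neither). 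Hence $H_0$ is either $\SL_2(k'')$ or the preimage in $\SL_2(\overline{k})$ of $\PGL_2(k'')$, for some finite subfield $k'' \supseteq k'$ of $k$.

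Next, I will observe that $H_0 \subseteq k^\times \GL_2(k'')$. Any element $g$ of the preimage of $\PGL_2(k'')$ in $\SL_2(\overline{k})$ may be written $g = \lambda h$ with $h \in \GL_2(k'')$ and $\lambda \in \overline{k}^\times$; if further $g \in \GL_2(k)$, then comparing a nonzero entry of $h$ with the corresponding entry of $g$ forces $\lambda \in k$. Combined with $\SL_2(k'') \subseteq H_0$ and $|k''| \geq |k'| > 5$, Lemma \ref{lem:SL2 and contained in SL2 implies 2-big} applied with $k''$ in place of $k'$ yields that $H_0$ is $2$-big.

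Finally, I will check that $H$ has no nontrivial $l$-power quotient. Since $\operatorname{char}(k) = l$, the image of $\det : H \to k^\times$ is a finite subgroup of order prime to $l$, so any such quotient must factor through $H_0$; but $H_0$ is either perfect (when $H_0 = \SL_2(k'')$, using $|k''|>3$) or has abelianization of order dividing $2$ (in the other case), and both preclude nontrivial $l$-power quotients since $l > 3$. Proposition \ref{normal subgroups and bigness} now finishes the proof. The main non-routine ingredient is the appeal to Dickson's classification together with the check that $H_0 \subseteq k^\times \GL_2(k'')$; once these are in hand, the remaining verifications are straightforward.
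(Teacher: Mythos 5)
Your proof is correct and follows essentially the same route as the paper's: invoke Dickson's classification, use the perfectness/simplicity and size of $\SL_2(k')$ to eliminate everything except the $\SL_2(k'')$ and $\PGL_2(k'')$-type cases, and reduce to Lemma \ref{lem:SL2 and contained in SL2 implies 2-big}. The paper is marginally more direct --- it classifies the image of $H$ in $\PGL_2(k)$ and concludes $\SL_2(k'')\subset H\subset k^\times\GL_2(k'')$ for $H$ itself, so Lemma \ref{lem:SL2 and contained in SL2 implies 2-big} applies without your detour through $H\cap\SL_2(k)$ and Proposition \ref{normal subgroups and bigness}; note also that your quoted list of finite subgroups of $\SL_2(\Flbar)$ omits the subgroups contained in a Borel, though these are solvable and hence excluded by the same perfectness argument you already use.
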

\begin{proof}
  Let $\bar{H}$ be the image of $H$ in $\PGL_2(k)$. By Theorem 2.47(b)
  of \cite{MR1605752}, $\bar{H}$ is either conjugate to a subgroup of
  the upper triangular matrices, or is conjugate to $\PSL_2(k'')$ or
  $\PGL_2(k'')$ for some finite extension $k''$ of $k$, or is
  isomorphic to $A_4$, $S_4$, $A_5$, or a dihedral group. Since
  $\PSL_2(k')$ is a simple group, and $k'$ has cardinality greater
  than 5, we see that $\bar{H}$ must be conjugate to $\PSL_2(k'')$ or
  $\PGL_2(k'')$ for some finite extension $k''$ of
  $k'$. Then \[\SL_2(k'')\subset H\subset k^\times\GL_2(k''),\] and
  the result follows from Lemma \ref{lem:SL2 and contained in SL2 implies 2-big}.
\end{proof}
We will be able to give a strengthening of this result once we have considered
the icosahedral, octahedral and tetrahedral cases in the next subsection.

\subsection{Icosahedral, octahedral and tetrahedral cases.}

  In this section, let
us fix at the outset a subgroup $H\subset \GL_2(k)$ whose image $Hk^\times/k^\times$
in $\PGL_2(k)$
is isomorphic to $A_5$, $S_4$ or $A_4$. Let us also suppose
$l>5$. Under the assumption that $k$ contains a primitive cube root of
unity, we will show that $H$ is 2-big. Since these groups clearly have no $l$-power
order quotients, to show $H$ is 2-big, it will suffice (by proposition
\ref{normal subgroups and bigness}) to show that the commutator subgroup $[H,H]$
is 2-big, since this is a normal subgroup of $H$. If $Hk^\times/k^\times=S_4$, the image of this commutator
subgroup in $\PGL_2(k)$ will be isomorphic to $[S_4,S_4]=A_4$. Thus, by replacing 
$H$ with $[H,H]$ in this case, we may suppress the 
possibility that $Hk^\times/k^\times\cong S_4$. Thus $H/Z(H)=A_n$, for $n=4$ or $5$.

Now, since both $Hk^\times/k^\times$ and $k^\times\cap H$ have order prime to $l$
(in the latter case, as $k^\times$ does), $H$ itself has order prime to $l$. Thus, if we
think of the inclusion $H\into \GL_2(k)$ as a representation of $H$, this representation
will have a lift to characteristic zero, say $r_l:H\into \GL_2(\Zlbar)$. Given an element 
$g\in A_n=H/Z(H)$, we can lift to an element $\tilde{g}\in H$, and map this to
an element $r_l(\tilde{g})$. Making a series of such choices (and choosing $1\in H$
as the lift of $1\in H/Z(H)$), we get a map $P:A_n\to\GL_2(\Zlbar)$ sending $g\mapsto
r_l(\tilde{g})$. This will be a projective representation in the sense of the (unnumbered)
definition given at the beginning of chapter 1 of \cite{hoffman1992projective}.

Recall that $n=4$ or $5$.
By the last sentence of chapter 2 of \cite{hoffman1992projective} (on p23, just after
the unnumbered remark after Theorem 2.12) we see the construction of a group,
called there $\tilde{A}_n$, which is a representation group for
$A_n$. 
(A \emph{representation group} of $G$, in the terminology
of \cite{hoffman1992projective}, is a stem extension $G^*$ of $G$ such that the 
newly adjoined central elements are isomorphic to the Schur multiplier $M(G)$
of $G$.) This is defined as a certain subgroup of a certain group $\tilde{S}_n$,
which is given a presentation just before Theorem 2.8 of \emph{loc.~cit.}, on p18.
Comparing this presentation to the discussion in \S2.7.2 of \cite{wilsonfinitesimple},
we see that $\tilde{A}_n$ is the same group as the group called $2.A_n$ in \cite{wilsonfinitesimple}
(since this seems to be the more standard name for this group, we shall call this
group $2.A_n$ from now on). Examining the discussion in \S5.6.8 and \S5.6.2
of \cite{wilsonfinitesimple}, we see that $2.A_5$ and $2.A_4$ are respectively the 
binary icosahedral and tetrahedral groups.
(Thus if we consider $A_5$ as the group of symmetries of a 
icosahedron, a subgroup of $\SO(3)$, then $2.A_5$ is the inverse image of $A_5$ under
the natural 2 to 1 map $\SU(2)\to\SO(3)$; and similarly for $A_4$ and the group of
symmetries of the tetrahedron.) 

Following the discussion at the top of p7 of \cite{hoffman1992projective}, we choose a 
map $r:A_n\to 2.A_n$ (\emph{not} a homomorphism) sending each element of $A_n$
to a lift in the representation group, and sending $1$ to $1$. 

Then by Theorem 1.3 of \cite{hoffman1992projective}, applied to the projective representation
$P$ above, we can find a representation
$$R: 2.A_n \to \GL_2(\Zlbar)$$
such that for each $a\in A_n$, we have $R(r(a))=P(a)\beta(a)$, where 
$\beta(a)\in \Zlbar$ is a scalar. 
(The statement of Theorem 1.3 is for representations over $\C$, but the
proof generalizes to our case.)
We will write $\bar{R}$ for the reduction of $R$ mod $l$.
We then claim that $k^\times H=k^\times \bar{R}(2.A_n)$. To see this, note
that if $h\in H$, then 
$h$ mod $Z(H)$ will be some element $x$ of $A_n$; then we have 
$h=\alpha \bar{P}(x) = \alpha \beta \bar{R}(r(x))$,
where $\alpha$ and $\beta$ are scalars and $\bar{P}(a)$ is the reduction of $P(a)$ mod $l$.
Thus the image of $\bar{R}(2.A_n)$ in $\PGL_2(k^\times)$ is $A_n$.

Now, note
also that by Proposition \ref{bigness and scalars}, we have that 
$$H \text{ 2-big}\Leftrightarrow k^\times H \text{ 2-big}\Leftrightarrow k^\times \bar{R}(2.A_n)
\text{ 2-big}\Leftrightarrow \bar{R}(2.A_n)\text{ 2-big}$$
and we may therefore replace $H$ by $\bar{R}(2.A_n)$, and therefore 
assume that $H$ is of the form $\bar{R}(2.A_n)$ 
where $\bar{R}$ is some
characteristic $l$ representation of $2.A_n$, whose kernel is contained
in the center of $2.A_n$ (that is, the representation is `faithful on the
$A_n$ quotient'; recall that $2.A_n/Z(2.A_n)=A_n$).
Indeed, $\bar{R}$ must either be faithful or factor through $A_n$, and we can think
of $\bar{R}$ as a faithful mod $l$ representation \emph{either} of
$A_n$ \emph{or} $2.A_n$.
Recall that $n=4$ or $5$.

Since $\bar{R}$ comes from characteristic zero, it
will be a direct sum of irreducible representations. Since 
$A_4$, $A_5$, $2.A_4$
and $2.A_5$ are non-abelian, no direct sum of one dimensional
representations can ever be faithful,
so we can in fact see that $\bar{R}$ is a a faithful irreducible 2 dimensional
representation of $A_4$, $A_5$, $2.A_4$
or $2.A_5$.
It will thus suffice to show that  the images of all such
representations are $2$-big. 

Thus, in order to establish the following:
\begin{prop} \label{prop: big image, hedral cases}
Suppose $l>5$ is a prime, and $k$ is an algebraic extension
of $\F_l$ which contains a primitive cube root of 1.
Suppose $H$ is a subgroup of $\GL_2(k)$, and the image of $H$ in 
$\PGL_2(k)$ is isomorphic to $A_5$, $S_4$ or $A_4$. Then $H$ is 2-big.
\end{prop}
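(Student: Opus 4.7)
The plan is to invoke the reduction already carried out in the preceding discussion and then directly verify the four clauses of Definition~\ref{defn: m-big} with $m=2$. We may assume $H = \bar{R}(2.A_n)$, where $n \in \{4,5\}$ and $\bar{R}\colon 2.A_n \to \GL_2(k)$ is a faithful (and automatically absolutely irreducible) $2$-dimensional representation obtained as the mod $l$ reduction of a characteristic zero representation. The underlying observation that powers everything is that $|2.A_n| \in \{24, 120\}$ is coprime to $l$, since $l>5$.

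Coprimality of $|H|$ with $l$ dispatches three of the four conditions at once: $H$ has no $l$-power-order quotient; Maschke's theorem gives $H^1(H, \mathfrak{sl}_2(k)) = 0$; and since $\bar R$ is absolutely irreducible (the characteristic zero lift is, and absolute irreducibility is preserved on reduction when $l\nmid|H|$), Schur's lemma yields $\End_{k[H]}(k^2) = k$, so $H^0(H, \mathfrak{sl}_2(k))=0$.

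For the last (eigenvalue) condition I first identify the irreducible $k[H]$-submodules of $\mathfrak{gl}_2(k) = k\cdot I \oplus \mathfrak{sl}_2(k)$, the decomposition being valid as $l$ is odd. The adjoint action on $\mathfrak{sl}_2(k)$ factors through $H/Z(H) \cong A_n$; the resulting $3$-dimensional representation is the unique $3$-dimensional irreducible of $A_4$, respectively one of the two $3$-dimensional irreducibles of $A_5$ (this is forced by the character tables once one notes that the image of $A_n$ in $\Aut(\mathfrak{sl}_2(k))$ is non-abelian, ruling out any decomposition into one-dimensional pieces). Since $l \nmid |A_n|$, irreducibility is preserved mod $l$, and the only irreducible $k[H]$-submodules of $\mathfrak{gl}_2(k)$ are $k\cdot I$ and $\mathfrak{sl}_2(k)$.

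Finally, by Cauchy's theorem $H$ contains an element $h$ of order $3$; such an $h$ is semisimple (order coprime to $l$) and non-scalar (since $Z(2.A_n)$ has order dividing $2$), and therefore has two distinct eigenvalues $\alpha,\beta$ satisfying $\alpha^3=\beta^3=1$, forcing $\alpha,\beta \in \{1,\zeta_3,\zeta_3^2\} \subset k$ — this is the single place the hypothesis $\zeta_3\in k$ is used and is the main obstacle to naively producing $h$ with eigenvalues in $k$ (an order $4$ element would instead require $\sqrt{-1}\in k$). Since $\alpha/\beta$ is a nontrivial cube root of unity, $\alpha^2 \neq \beta^2$. For $W = k\cdot I$ the identity matrix shows $\pi_{h,\alpha}\circ W\circ i_{h,\alpha} \neq 0$; for $W = \mathfrak{sl}_2(k)$, in an eigenbasis of $h$ the matrix $\diag(1,-1)$ lies in $W$ (we are using $l\ne 2$) and $\pi_{h,\alpha}\circ \diag(1,-1) \circ i_{h,\alpha} = \id$ on the $\alpha$-eigenline, completing the verification.
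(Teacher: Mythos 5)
Your proof is correct, and its overall architecture coincides with the paper's: accept the reduction to $H=\bar R(2.A_n)$, dispose of the first three conditions of Definition \ref{defn: m-big} using $l\nmid|H|$, identify the irreducible $k[H]$-submodules of $\gl_2(k)$ as $k\cdot I$ and $\mathfrak{sl}_2(k)$, and then satisfy the eigenvalue condition with a non-central element of order $3$, whose distinct eigenvalues are cube roots of unity lying in $k$ precisely because $\zeta_3\in k$ and which satisfy $\alpha^2\neq\beta^2$ because $3$ is odd. The paper packages the last two steps into Lemma \ref{big image convenient lemma} and then spends the bulk of the proof of Proposition \ref{prop: big image, hedral cases, simplified} verifying that $\ad^0 r$ is irreducible by explicit character computations in $2.A_5$ (via the ATLAS) and $2.A_4$ (via a character table it reproduces). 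Here your route genuinely diverges and is slicker: you observe that $\ad^0\bar R$ factors through a \emph{faithful} $3$-dimensional semisimple representation of $A_n$ itself ($n=4,5$), and since $A_4$ and $A_5$ have no $2$-dimensional irreducibles and all their $1$-dimensional representations have abelian image, a faithful semisimple $3$-dimensional representation must be irreducible; this needs only the character tables of $A_4$ and $A_5$ (or just their irreducible dimensions in characteristic prime to the order), not those of the binary groups. The only places where you are terse rather than wrong: the parenthetical "ruling out any decomposition into one-dimensional pieces" should also explicitly invoke the absence of $2$-dimensional irreducibles to exclude a $1\oplus 2$ splitting, and the faithfulness of the $A_n$-action on $\mathfrak{sl}_2(k)$ (i.e., that only scalars centralize $\mathfrak{sl}_2$) deserves a sentence. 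Neither is a gap.
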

It will suffice to establish the following:
\begin{prop} \label{prop: big image, hedral cases, simplified}
Suppose $l>5$ is a prime, and $k$ is an algebraic extension
of $\F_l$ which contains a primitive cube root of 1.
Suppose $G$ is one of $A_5$, $2.A_5$,
$A_4$ or $2.A_4$. Suppose $r:G\to\GL_2(k)$ is a faithful 
irreducible representation. Then $r(G)$ is 2-big.
\end{prop}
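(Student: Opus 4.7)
First I would observe that the cases $G = A_4$ and $G = A_5$ are vacuous. Since $l > 5$ is prime to $|G|$, the mod-$l$ representation theory coincides with the characteristic-zero one, and the irreducible $\bar k$-representations of $A_4$ have dimensions $1,1,1,3$ while those of $A_5$ have dimensions $1,3,3,4,5$. Since $k$ contains $\zeta_3$, the non-trivial one-dimensional characters of $A_4$ are individually $k$-rational, so no $k$-irreducible representation arises as a sum of Galois conjugates of dimension two. Hence neither $A_4$ nor $A_5$ admits a faithful irreducible two-dimensional representation over $k$, and we may assume $G \in \{2.A_4,\, 2.A_5\}$.

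Next I would dispatch the cohomological conditions in Definition~\ref{defn: m-big}. Since $|r(G)|$ divides $|G| \in \{24,120\}$ it is prime to $l$, so $r(G)$ has no non-trivial $l$-power quotient and $H^1(r(G), \mathfrak{sl}_2(k)) = 0$ by Maschke. To kill $H^0$ I would first show that $r$ is absolutely irreducible: if $r \otimes_k \bar k$ were reducible it would split as a sum of two characters of $G$, both of which factor through $G^{\mathrm{ab}}$, so the intersection of their kernels contains $[G,G]$. But $[G,G]$ is $Q_8$ for $G = 2.A_4$ and all of $2.A_5$ for $G = 2.A_5$; both are non-trivial, contradicting faithfulness. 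Absolute irreducibility and Schur's lemma together give $\mathfrak{gl}_2(k)^{r(G)} = k \cdot I$, which forces $H^0(r(G), \mathfrak{sl}_2(k)) = 0$ since $l \ne 2$.

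Finally, for the eigenvalue condition I would decompose $\mathfrak{gl}_2 = k \cdot I \oplus \mathfrak{sl}_2$ and argue that $\mathfrak{sl}_2$ is $k[r(G)]$-irreducible. The central element $-1 \in G$ acts trivially on $\mathfrak{sl}_2$, so the action factors through the projective image $G/\{\pm 1\} = A_n$ ($n = 4, 5$); this realises $A_n$ as the rotation group of a regular tetrahedron or icosahedron, and the adjoint action is the absolutely irreducible three-dimensional ``rotation'' representation of $A_n$. Thus the only irreducible submodules to check are $k \cdot I$ and $\mathfrak{sl}_2$. Both $2.A_4$ and $2.A_5$ contain an element $g$ of order~$3$ (a Sylow $3$-subgroup of $A_n$ lifts split into $G$), and $h := r(g)$ satisfies $h \ne I$, $h^3 = I$, so $h$ is diagonalisable with eigenvalues $\alpha = \zeta_3,\, \beta = \zeta_3^{-1}$, both in $k$. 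These are distinct simple roots of the characteristic polynomial of $h$ and satisfy $\alpha^2 = \zeta_3^{-1} \ne \zeta_3 = \beta^2$ (using $l \ne 3$). In an eigenbasis of $h$ the composition $\pi_{h,\alpha} \circ M \circ i_{h,\alpha}$ simply reads off the $(1,1)$-entry of $M$, which is $1$ for $M = I$ and $1$ for $M = \diag(1,-1) \in \mathfrak{sl}_2$; this completes both verifications. The only real subtlety is the irreducibility of $\mathfrak{sl}_2$ just invoked, which rests on identifying the adjoint representation with the standard tetrahedral/icosahedral rotation representation of $A_n$.
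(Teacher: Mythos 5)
Your proposal is correct and follows the same skeleton as the paper's argument: kill the group-theoretic and cohomological conditions using $\gcd(|G|,l)=1$ and (absolute) irreducibility, reduce the last condition of Definition \ref{defn: m-big} to the irreducibility of $\ad^0 r$ plus the existence of a non-central element of order $3$ whose eigenvalues $\zeta_3,\zeta_3^{-1}$ lie in $k$ and have distinct squares. (The paper packages exactly this reduction as Lemma \ref{big image convenient lemma}.) Where you diverge is in how the irreducibility of $\mathfrak{sl}_2(k)$ as a $k[r(G)]$-module is established: the paper lifts $r$ to characteristic zero and computes $\ad^0 r$ explicitly from the character tables of $2.A_5$ (via the ATLAS) and $2.A_4$ (reproduced in the paper), identifying it with $\chi_2$, $\chi_3$, or $\rho_{3,2.A_4}$ in each case; you instead assert that $\ad^0 r$ "is" the rotation representation of $A_n$. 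As stated that identification is slightly too quick, since $r$ is an arbitrary faithful irreducible two-dimensional representation, not a priori the natural one coming from $2.A_n\subset\SU(2)$ (for $2.A_5$ there are two inequivalent choices, with non-isomorphic adjoints). But the claim you actually need — that $\ad^0 r$ is irreducible — follows cleanly without character tables: since $r$ is faithful and absolutely irreducible, the kernel of the action on $\mathfrak{gl}_2$ is exactly $Z(G)$, so $A_n$ acts faithfully on $\mathfrak{sl}_2$; as $l\nmid |G|$ the ordinary representation theory applies, every irreducible representation of $A_4$ or $A_5$ has dimension $1$ or $\ge 3$, so a three-dimensional representation is either irreducible or a sum of characters, and the latter is impossible because $A_n$ is non-abelian. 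With that one sentence supplied, your argument is complete and arguably more conceptual than the paper's table computations; your disposal of the vacuous cases $G=A_4,A_5$ by dimension count also matches what the paper does by inspecting which rows of the $2.A_n$ tables factor through $A_n$.
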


Before we do so, the following lemma will be useful.
\begin{lem} \label{big image convenient lemma}
Suppose $G$ is a group, and $l$ is a prime such that that $\hcf(l,|G|)=1$.
Suppose that $r:G\to\GL_2(k)$ is a faithful irreducible representation. 
Suppose further that the following hold.
\begin{itemize}
\item $\ad^0 r$ is irreducible.
\item There exists an odd prime $p$ such that:
\begin{itemize} \item $G$ has some non-central element of order $p$, and
\item $k$ has a primitive $p$-th root of unity.
\end{itemize} 
\end{itemize}
Then $r(G)$ is 2-big.
\end{lem}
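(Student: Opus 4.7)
The plan is to verify each of the four clauses of Definition~\ref{defn: m-big} for $H := r(G)$, with $m = n = 2$. The first three are formalities: $|H|$ divides $|G|$ and so is prime to $l$, whence $H$ admits no nontrivial $l$-power quotient; $\mathfrak{sl}_2(k) = \ad^0 r$ is an irreducible $k[H]$-module of dimension $> 1$, so its invariants $H^0(H,\mathfrak{sl}_2(k))$ form a proper submodule and hence vanish; and $H^1(H,\mathfrak{sl}_2(k)) = 0$ by Maschke's theorem, since $|H|$ is invertible in $k$.

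For the fourth clause I would take $h := r(g) \in H$. Because $\hcf(p,l) = 1$ and $k$ contains a primitive $p$-th root of unity, $h$ is diagonalisable over $k$ with eigenvalues $\alpha,\beta$ that are $p$-th roots of unity. If $\alpha = \beta$, then $h$ is a scalar, forcing $g \in Z(G)$ by faithfulness of $r$ and contradicting the hypothesis on $g$; so $\alpha \neq \beta$, and both are simple roots of the characteristic polynomial of $h$. If $\alpha = -\beta$ then $\alpha/\beta = -1$ would be a $p$-th root of unity, which is impossible for $p$ an odd prime; hence $\alpha^2 \neq \beta^2$.

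It remains to verify the projection/injection condition for every irreducible $k[H]$-submodule $W$ of $\mathfrak{gl}_2(k)$. Fix an eigenbasis $\{e_1,e_2\}$ of $h$ with $h e_1 = \alpha e_1$; then for $M \in \mathfrak{gl}_2(k)$ the composite $\pi_{h,\alpha} \circ M \circ i_{h,\alpha}$ is multiplication by the $(1,1)$-entry of $M$ in this basis. Assuming $l$ is odd (which holds in all intended applications, e.g.\ of Proposition~\ref{prop: big image, hedral cases, simplified}), we have the $k[H]$-module decomposition $\mathfrak{gl}_2(k) = \mathfrak{sl}_2(k) \oplus k\cdot I$, and by irreducibility of $\ad^0 r$ the only irreducible $k[H]$-submodules of $\mathfrak{gl}_2(k)$ are $k \cdot I$ and $\mathfrak{sl}_2(k)$. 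For $W = k \cdot I$ the identity has $(1,1)$-entry $1$; for $W = \mathfrak{sl}_2(k)$ the traceless matrix $\diag(1,-1)$ in this eigenbasis has $(1,1)$-entry $1$. The only genuinely nontrivial step is the construction of $h$, and the hypotheses on $p$ and on $k$ are tailored precisely to make this go through.
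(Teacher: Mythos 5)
Your proposal is correct and follows essentially the same route as the paper: the first three conditions of Definition \ref{defn: m-big} from $\hcf(l,|G|)=1$ and the irreducibility of $\ad^0 r$, and the fourth by taking $h=r(g)$ for a non-central $g$ of order $p$, whose eigenvalues are distinct $p$-th roots of unity in $k$ with $\alpha^2\neq\beta^2$ since $p$ is odd, followed by the trivial check on the two irreducible submodules $k\cdot I$ and $\mathfrak{sl}_2(k)$. (Your parenthetical worry about $l=2$ is moot: irreducibility of $\ad^0 r$ already forces $l$ odd, since otherwise $k\cdot I$ would be a proper submodule of $\mathfrak{sl}_2(k)$.)
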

\begin{proof}
  The fact that $\hcf(l,|G|)=1$ implies that $r(G)$ has no quotients
  of $l$-power order and also that $H^1(r(G),\mathfrak{sl}_2(k))=(0)$ (the latter
  claim follows immediately from Corollary 1 of section VIII.2 of
  \cite{MR554237}).
Since $\ad^0 r$ is irreducible and has dimension 3, it cannot contain any copy
of the trivial representation and hence $H^0(G,\ad^0 r)=(0)$; that is, 
$H^0(r(G),\mathfrak{sl}_2(k))=(0)$.

Finally, let $g$ be a non-central element of $G$ of order $p$. Note that $r(g)$ has order $p$
and must be non-central (since $r$ is faithful). If the two roots of the characteristic polynomial
were equal, then $r(g)$ would be a scalar matrix and hence central. ($r(g)$ must be
diagonalizable, as we assume that $\hcf(l,|G|)=1$ and 
non-semisimple elements of $\GL_2(k)$ have order
divisible by $l$.)
Thus the roots of the characteristic polynomial of $r(g)$ are distinct $p$-th roots of unity,
$\alpha$, and $\alpha'$, with at least one of them 
($\alpha$, say) primitive. 
Since $p$ is odd, we have that $\alpha^2\ne(\alpha')^{2}$.
Since $k$ contains 
the $p$-th roots of unity, $\alpha\in k$.

Now using the fact that $\ad^0 r$ is irreducible again, we see that the only irreducible 
$k[r(G)]$ submodules of $\gl_2(k)$ are $\mathfrak{sl}_2(k)$ and $\langle I\rangle$. Checking that
$\pi_{r(g),\alpha}\circ W\circ i_{r(g),\alpha}\neq 0$ for each of these is trivial.
\end{proof}

\begin{proof}[Proof of Proposition \ref{prop: big image, hedral cases, simplified}]
Since $l>5$, $\hcf(|r(G)|,l)=1$. Then given Lemma \ref{big image convenient lemma}, using 
our assumption that $k$ has a 3rd root of unity, and noting that $A_4$ and $A_5$
have elements of order 3, we see that it suffices to check that $\ad^0 r$ is irreducible.

Since $\hcf(|r(G)|,l)=1$, $r$ must be the reduction of a characteristic zero representation,
and  we can replace $r$
with a characteristic zero lift, and prove that $\ad^0 r$ is irreducible for this new $r$.

Let us first deal with the case $G=2.A_5$.
The character table of $2.A_5$ can be found on page 5 of
\cite{conway-atlas},
and we can immediately see that there are precisely two irreducible representations
of dimension 2, corresponding to the characters $\chi_6$ and $\chi_7$ there.
(The first of these corresponds to $\rho_{\text{nat},2.A_5}$, the natural representation we get
by thinking of $2.A_5$ as the binary icosahedral group, and hence a subgroup of $\SU(2)$;
the second is $\rho^{(12)}_{\text{nat},2.A_5}$)
Since the character $\chi_6$ is real, the dual representation of 
$\rho_{\text{nat}}$ has the same character
and $\ad^0\rho_{\text{nat},2.A_5}$ has character $\chi_6^2-1$.
We recognize this character as $\chi_2$ from the table. Thus $\ad^0\rho_{\text{nat}}$
is irreducible. Similarly $\ad^0\rho_{\text{nat},2.A_5}^{(12)}$ has character $\chi_3$, which
is irreducible. 

We can also deal with the case $G=A_5$; since $A_5$ is a quotient of
$2.A_5$,
every representation of 
$A_5$ will occur in the character table for $2.A_5$; but in fact the only two 
dimensional representations we saw do not factor through the center of $2.A_5$.

The character table
of  $2.A_4$ is standard, but
since we have been unable to locate a convenient published reference 
for them, we reproduce it here.
(See Figure \ref{bin a4 char table}.  In 
calculating this table, James Montaldi's web page, which provides
a real character table for $2.A_4$, was very helpful.)
Here are some notes on the construction of the table, which should provide enough
detail that the reader can straightforwardly check its accuracy:
\begin{itemize}
\item We have labelled conjugacy classes by thinking of $2.A_4$ as
the binary tetrahedral group, and hence as a
subgroup of $\SU(2)$; we have written elements of $\SU(2)$ as unit quaternions.
\item $1$ stands for the trivial representation.
\item $\omega$ stands for the character $2.A_4\to A_4 \to A_4/K_4\isoto \Z/3\Z\to \langle e^{2\pi/3}\rangle$.
\item $\rho_{2,2.A_4}$ stands for the natural 2D representation we get by thinking of 
$2.A_4$ as a subgroup of $\SU(2)$.
\item $\rho_{3,2.A_4}$ stands for the natural 3D representation we get by thinking of 
$2.A_4$ as a subgroup of $\SU(2)$ then mapping $\SU(2)/\{\pm1\}\to\SO(3)$.
\end{itemize}

\begin{figure}
\begin{tabular}{l|ccccccc}
\emph{class} &$e$ & [-1] &$[i]$ & $[\frac{1+i+j+k}{2}]$ &$[\frac{1+i+j-k}{2}]$  &$[\frac{-1+i+j+k}{2}]$ &$[\frac{-1+i+j-k}{2}]$  \\  
\emph{size} & 1 & 1 & 6 &4&4&4&4 \\
\hline
1 					& 1 & 1 & 1 & 1 & 1&1&1		\\
$\omega$				& 1 & 1 & 1 & $e^{2\pi/3}$& $e^{-2\pi/3}$& $e^{-2\pi/3}$& $e^{2\pi/3}$\\
$\omega^{\otimes2}$	& 1 & 1 & 1 & $e^{-2\pi/3}$& $e^{2\pi/3}$& $e^{2\pi/3}$& $e^{-2\pi/3}$\\
$\rho_{2,2.A_4}$				& 2 & -2 & 0 & 1& 1& -1& -1\\
$\rho_{2,2.A_4}\otimes\omega$	& 2 & -2 & 0 &$e^{2\pi/3}$& $e^{-2\pi/3}$& $-e^{-2\pi/3}$& $-e^{2\pi/3}$\\
$\rho_{2,2.A_4}\otimes\omega^{\otimes2}$	& 2 & -2 & 0 &$e^{-2\pi/3}$& $e^{2\pi/3}$& $-e^{2\pi/3}$& $-e^{-2\pi/3}$\\
$\rho_{3,2.A_4}$				& 3 & 3 & -1 &0&0&0&0\\
\end{tabular}
\begin{caption}{Character table for $2.A_4$. \label{bin a4 char table}}
\end{caption}
\end{figure}

We note that $A_4$ has no irreducible 2D representations (all 2D irreducible representations
of $2.A_4$ fail to factor through $A_4$) and the only irreducible 2D representations
of $2.A_4$ are $\rho_{2,2.A_4}$, $\rho_{2,2.A_4}\otimes \omega$ and $\rho_{2,2.A_4}\otimes \omega^{\otimes2}$;
tensoring any of these with its dual gives $\rho_{2,2.A_4}^{\otimes 2}$, which is $1\oplus\rho_{3,2.A_4}$.
Thus, for each of these representations, the $\ad^0$ is $\rho_{3,2.A_4}$ and hence irreducible.
\end{proof}

\subsection{A synoptic result} 

We now combine the results of the previous two sections to give a somewhat
explicit characterization of big subgroups of $\GL_2(k)$. Our first result 
is an extension of Lemma \ref{lem:Containing SL2 implies 2-big}.

\begin{lem}\label{lem:a bigness alternative}
  Let $l>5$ be prime, and let $k$ be an
  algebraic extension of $\Fl$. Suppose that $k$ contains a primitive
  cube root of 1.
  If $H$ is a finite subgroup of $\GL_2(k)$, acting irreducibly on $k^2$,
  then we have the following alternative.
  Either:
  \begin{enumerate}
  \item the image of $H$ in $\PGL_2(k)$ is a dihedral group, or
  \item $H$ is 2-big.
  \end{enumerate}
  \end{lem}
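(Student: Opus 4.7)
My plan is to follow the same strategy as in the proof of Lemma~\ref{lem:Containing SL2 implies 2-big}: apply the classification of finite subgroups of $\PGL_2(k)$ (Theorem 2.47(b) of \cite{MR1605752}) to the projective image $\bar H$ of $H$. Since $H$ acts irreducibly, $\bar H$ is not conjugate into the upper triangular matrices, so $\bar H$ is (a) dihedral---giving conclusion~(1); (b) isomorphic to $A_4$, $S_4$ or $A_5$---in which case $H$ is $2$-big by Proposition~\ref{prop: big image, hedral cases} (using the hypothesis that $k$ contains a primitive cube root of unity); or (c) conjugate in $\PGL_2(k)$ to $\PSL_2(k'')$ or $\PGL_2(k'')$ for some finite subfield $k'' \subseteq k$. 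The substantive work is case~(c).

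In case~(c), after replacing $H$ by a $\GL_2(k)$-conjugate I may assume $\bar H$ is the standard $\PSL_2(k'')$ or $\PGL_2(k'')$ inside $\PGL_2(k)$; since $|k''|$ is a power of $l\geq 7$, in particular $|k''|\geq 7 >5$. The plan is then to produce a copy of $\SL_2(k'')$ inside $H$: since $\SL_2(k'')\triangleleft k^\times\cdot\SL_2(k'')\supseteq H$ this copy will be normal in $H$, and Lemma~\ref{lem:Containing SL2 implies 2-big} (applicable because $|k''|>5$) will give that $\SL_2(k'')$ is $2$-big; I would then conclude by Proposition~\ref{normal subgroups and bigness} that $H$ is $2$-big. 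That application requires also that $H$ have no $l$-power quotient, which I would verify by noting that $H^{\mathrm{ab}}$ surjects onto $\bar H^{\mathrm{ab}}$ (of order dividing~$2$) with kernel a quotient of $Z:=H\cap k^\times$; since $k^\times$ contains no element of order $l$, $Z$ has prime-to-$l$ order, and so $H^{\mathrm{ab}}$ does too (as $l>2$).

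The main technical step, and principal obstacle, is producing $\SL_2(k'')\hookrightarrow H$. Let $H_0\subseteq H$ be the preimage of $\PSL_2(k'')$, which is a central extension of $\PSL_2(k'')$ by $Z$. Because $|k''|$ is a prime power with $|k''|>5$ and $|k''|\neq 9$ (the latter would force $l=3$, contrary to $l>5$), the Schur multiplier of $\PSL_2(k'')$ is $\Z/2\Z$ and $\SL_2(k'')$ is its universal central extension; the universal property thus furnishes a unique homomorphism $\iota:\SL_2(k'')\to H_0$ covering the identity on $\PSL_2(k'')$, necessarily with $\ker\iota\subseteq\{\pm I\}$. If $\ker\iota=\{\pm I\}$, then $\iota$ would descend to an embedding $\PSL_2(k'')\hookrightarrow H_0\subseteq\GL_2(k)$, i.e.\ a faithful $2$-dimensional representation of $\PSL_2(k'')$ over $k$. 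I plan to rule this out via Steinberg's tensor product theorem: every irreducible $2$-dimensional $\overline{\F}_l$-representation of $\SL_2(k'')=\SL_2(\F_q)$ is a Frobenius twist of the natural representation, and in each such $-I$ acts by the scalar $-1\neq 1$ (since $l$ is odd); moreover $\SL_2(k'')$ is perfect, so has no nontrivial $1$-dimensional representation, ruling out reducible $2$-dimensional possibilities as well. Thus no faithful $2$-dimensional representation of $\PSL_2(k'')$ exists over $\overline{\F}_l$, hence none over $k$ either, a contradiction. Therefore $\iota$ is injective and $\SL_2(k'')\hookrightarrow H_0\subseteq H$, completing the proof.
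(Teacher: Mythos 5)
Your argument is correct in substance and follows the paper's own route: Dickson's classification of the projective image $\bar H$, irreducibility to rule out the Borel case, Proposition \ref{prop: big image, hedral cases} when $\bar H$ is $A_4$, $S_4$ or $A_5$, and reduction to a contained copy of $\SL_2(k'')$ when $\bar H$ is $\PSL_2(k'')$ or $\PGL_2(k'')$. In that last case the paper simply points back to the proof of Lemma \ref{lem:Containing SL2 implies 2-big}, where the key containment $\SL_2(k'')\subset H$ is asserted rather than derived; your universal-central-extension argument (valid since $q=|k''|$ is a power of $l\geq 7$, so $q>5$ and $q\neq 9$, whence $\SL_2(\F_q)$ is the universal central extension of $\PSL_2(\F_q)$, and $\PSL_2(\F_q)$ admits no faithful two-dimensional representation in characteristic $l$) supplies that justification correctly. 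The one slip is the assertion $H\subseteq k^\times\cdot\SL_2(k'')$, which you use to get normality: the projective image of $k^\times\cdot\SL_2(k'')$ is only $\PSL_2(k'')$, so this containment fails whenever $\bar H=\PGL_2(k'')$. The slip is harmless: the correct containment $H\subseteq k^\times\cdot\GL_2(k'')$ does hold, and $\SL_2(k'')$ is normal in $k^\times\cdot\GL_2(k'')$, so normality of your copy in $H$ survives; better still, once $H$ is known to contain a conjugate of the standard $\SL_2(k'')$ (which your Steinberg step gives, since any faithful irreducible two-dimensional representation of $\SL_2(k'')$ in characteristic $l$ has image conjugate to the standard copy), Lemma \ref{lem:Containing SL2 implies 2-big} applies to $H$ directly and the normal-subgroup detour via Proposition \ref{normal subgroups and bigness} is unnecessary.
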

\begin{proof}
  Let $\bar{H}$ be the image of $H$ in $\PGL_2(k)$. By Theorem 2.47(b)
  of \cite{MR1605752}, $\bar{H}$ is either conjugate to a subgroup of
  the upper triangular matrices, or is conjugate to $\PSL_2(k'')$ or
  $\PGL_2(k'')$ for some finite extension $k''$ of $k$, or is
  isomorphic to $A_4$, $S_4$, $A_5$, or a dihedral group. 
  We saw in the proof of Lemma \ref{lem:Containing SL2 implies 2-big} that we are
  done in the case $\bar{H}$ is conjugate to 
  $\PSL_2(k'')$ or
  $\PGL_2(k'')$ for some finite extension $k''$ of $k$, since then
  $H$ is 2-big.
  The possibility that  $\bar{H}$ is conjugate to a subgroup of
  the upper triangular matrices is excluded by our assumption that
  $H$ acts irreducibly on $k^2$. If $\bar{H}$ is dihedral then we 
  are certainly done; thus we may assume $\bar{H}$  is
  isomorphic to $A_4$, $S_4$, $A_5$, whence we are done
  by Proposition \ref{prop: big image, hedral cases}.
\end{proof}

We also prove a modified version of this result tailored for working
with Galois representations.

\begin{lem}\label{lem:a bigness alternative II}Let $l>5$ be prime, and let $k$ be an
  algebraic extension of $\Fl$. Suppose that $k$ contains a primitive
  cube root of 1.
  Suppose $K$ is a number field, and $r:G_K\to\GL_2(k)$ a continuous
  absolutely irreducible representation. Then we have the following alternative.
  Either:
  \begin{enumerate}
  \item $r(G_{K(\zeta_l)})$ is 2-big, or
  \item there are field extensions $K_2/K_1/K$, with
  \begin{itemize}
  \item $K_1/K$ either trivial or cubic (and hence cyclic) Galois, and
  \item $K_2/K_1$ quadratic
  \end{itemize}
   and, after possibly replacing $k$ by its quadratic extension, a continuous character $\bar{\theta}: K_2\to k^\times$ such 
   that $r|_{G_{K_1}} \cong \Ind^{G_{K_1}}_{G_{K_2}}\bar{\theta}$.
  \end{enumerate}
  \end{lem}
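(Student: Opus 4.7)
The plan is to apply Lemma \ref{lem:a bigness alternative} to the group $H := r(G_{K(\zeta_l)}) \subset \GL_2(k)$, which is finite because $r$ is continuous and $\GL_2(\overline{\F}_l)$ carries the discrete topology, and then to descend from the dihedral case to the desired induced form.

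First I would handle the case where $r|_{G_{K(\zeta_l)}}$ fails to be absolutely irreducible. After at most a quadratic extension of $k$ we may write $r|_{G_{K(\zeta_l)}} = \bar\chi_1 \oplus \bar\chi_2$; absolute irreducibility of $r$ forces $\bar\chi_1 \ne \bar\chi_2$ and $\Gal(K(\zeta_l)/K)$ to swap them transitively. Letting $K_2 \subset K(\zeta_l)$ be the fixed field of the stabilizer of $\bar\chi_1$, we obtain $[K_2 : K] = 2$ and $r \cong \Ind_{G_{K_2}}^{G_K} \bar\theta$ for any extension $\bar\theta$ of $\bar\chi_1$ to $G_{K_2}$, giving conclusion (2) with $K_1 = K$.

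Otherwise $H$ acts absolutely irreducibly on $k^2$, and Lemma \ref{lem:a bigness alternative} gives either conclusion (1) of the present lemma or that the projective image $\bar H$ of $H$ in $\PGL_2(k)$ is dihedral of some order $2n$; we may assume the latter. When $n \geq 3$, the rotation subgroup $C \subset \bar H$ is characteristic, so its preimage $H_0 \subset H$ is characteristic in $H$ and hence, since $H \triangleleft r(G_K)$, normal in $r(G_K)$. The non-scalar elements of $H_0$ share a canonical unordered pair of eigenlines $\{L_1, L_2\}$ (defined over at most a quadratic extension of $k$), and this pair is preserved by the normalizer $r(G_K)$; absolute irreducibility of $r$ forces $r(G_K)$ to swap the two lines, so the stabilizer $G' \subset r(G_K)$ of $L_1$ has index $2$ and gives a quadratic extension $K_2/K$ with $r \cong \Ind_{G_{K_2}}^{G_K} \bar\theta$, where $\bar\theta$ is the character by which $G'$ acts on $L_1$. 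Again take $K_1 = K$.

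The main delicate case is $n = 2$, i.e.\ $\bar H = V_4$; this is where the cubic extension in the statement arises. Here the three subgroups of $V_4$ of order $2$ are no longer individually distinguished, but $r(G_K)$ acts on this three-element set by conjugation, yielding $\phi : r(G_K) \to S_3$. Since $V_4$ is abelian, $\phi$ kills $H$ and factors through the cyclic group $r(G_K)/H$ (a quotient of $\Gal(K(\zeta_l)/K) \hookrightarrow (\Z/l\Z)^\times$), so its image is cyclic of order $1$, $2$, or $3$. If the image has order $1$ or $2$, some order-$2$ subgroup of $V_4$ is stabilized by $r(G_K)$, yielding an $r(G_K)$-invariant pair of eigenlines, and the argument of the $n \geq 3$ case applies with $K_1 = K$. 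If the image has order $3$, take $K_1$ to be the fixed field of $r^{-1}(\ker \phi)$, a cyclic cubic extension of $K$; then $r(G_{K_1})$ stabilizes each of the three subgroups of $V_4$ and hence each pair of eigenlines, and picking one gives a quadratic extension $K_2/K_1$ and a character $\bar\theta$ with $r|_{G_{K_1}} \cong \Ind_{G_{K_2}}^{G_{K_1}} \bar\theta$, establishing conclusion (2).
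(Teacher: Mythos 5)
Your argument is correct, and it rests on the same two ingredients as the paper's proof --- Dickson's classification of finite subgroups of $\PGL_2(\Flbar)$ (via Lemma \ref{lem:a bigness alternative}) and the extraction of a cubic extension when the obstruction is a projective image $V_4$ --- but it is organized differently. You classify the projective image of the restriction $H=r(G_{K(\zeta_l)})$ directly (after splitting off the case where $r|_{G_{K(\zeta_l)}}$ is not absolutely irreducible), whereas the paper classifies the projective image of the full group $r(G_K)$ and then determines which normal subgroups with cyclic quotient can occur as the image of the restriction; your route avoids a separate discussion of the $S_4$ and $A_5$ possibilities for $r(G_K)$. In the dihedral cases you work with the canonical pair of eigenlines of the preimage of the rotation subgroup, while the paper instead uses the quadratic character $D\onto D/R$ together with the observation that a subgroup of $\GL_2$ with cyclic projective image is abelian, hence reducible after a quadratic extension of $k$; and your cubic extension comes from the conjugation action of $r(G_K)$ on the three involutions of $V_4$ (using that $r(G_K)/H$ is cyclic), which is literally the same homomorphism as the paper's $A_4\onto A_4/K_4\cong\Z/3\Z$, reached without first identifying the projective image of $r(G_K)$ as $A_4$. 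Two small points should be made explicit in a final write-up: the non-scalar elements of the abelian subgroups you use are semisimple (their $n$-th powers are scalar and $l$ is prime to $2n$ for the dihedral groups occurring in Dickson's list; a non-semisimple element of finite order has order divisible by $l$), so the ``canonical pair of eigenlines'' exists; and in the order-$3$ case one must check that $r(G_{K_1})$ genuinely interchanges the two chosen lines rather than fixing both --- this holds because $r(G_{K_1})\supseteq H$ and $V_4$ is not cyclic, so $r(G_{K_1})$ cannot lie in a torus. Also, in your first case the character $\bar\theta$ must be the character by which the stabilizer $G_{K_2}$ acts on the $\bar\chi_1$-eigenline, not an arbitrary extension of $\bar\chi_1$ to $G_{K_2}$; distinct extensions give non-isomorphic inductions.
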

\begin{proof}
  We begin by claiming that in any case where we can construct an
  extension $K_1/K$, either trivial or cubic, such that the image $I$ of
  $r(G_{K_1})$ in $\PGL_2(k)$ is isomorphic to a dihedral group $D$,
  then we are done. For then, writing $R\triangleleft D$ for the rotations
  in $D$, we have:
    $$G_{K_1}\overset{r}{\to} r(G_{K_1}) \onto I \isoto D \onto D/R
    \isoto \langle-1\rangle$$ is a nontrivial quadratic character of
    $K_1$, defining a field extension $K_2/K_1$.  Then $r(G_{K_2})$
    has cyclic image in $\PGL_2(k)$ (as $R$ is cyclic), and hence
    $r(G_{K_2})\subset\GL_2(k)$ is abelian (as any central extension
    of a cyclic group is abelian). It follows, after possibly
    replacing $k$ by its quadratic extension, that $r|_{G_{K_2}}$ is
    reducible. If $r|_{G_{K_2}}$ is indecomposable, we see easily that
    $r$ is reducible, contradicting our assumptions. Similarly, if
    $r|_{G_{K_2}}$ decomposes as a direct sum of characters and this
    decomposition is preserved under conjugation by a non-trivial
    element of $\Gal(K_2/K_1)$, then using the fact that the extension
    $K_1/K$ is either trivial or cubic, we see that $r$ is reducible.
    Thus $r|_{G_{K_2}}$ is a direct sum of characters and conjugation
    by a nontrivial element of $\Gal(K_2/K_1)$ swaps these characters.
    Thus, in this case,
     $$r|_{G_{K_1}} \cong \Ind^{G_{K_1}}_{G_{K_2}}\bar{\theta}$$
     and we have the second alternative in the statement of the
     theorem.

  Now, let $\bar{H}$ be the image of $r(G_K)$ in $\PGL_2(k)$. 
  Let $\bar{H}'$ be the image of $r(G_{K(\zeta_l)})$ in $\PGL_2(k)$. 
  Since $r(G_{K(\zeta_l)})\triangleleft G_K$, with cyclic quotient, we have that 
  $\bar{H} \triangleleft \bar{H}'$ with cyclic quotient.
  Again, by Theorem 2.47(b)
  of \cite{MR1605752}, we have the following possibilities for $\bar{H}$:
  \begin{itemize}
  \item {\sl $\bar{H}$ is conjugate to a subgroup of the upper triangular matrices.}
     This is ruled out by our assumption that $r$ is irreducible.
  \item {\sl $\bar{H}$ is isomorphic to a dihedral group.} In this case, we may take 
     $K_1=K$ and we are done by the first paragraph of the proof.
  \item {\sl $\bar{H}$ is conjugate to $\PSL_2(k'')$ or $\PGL_2(k'')$ for some
     finite extension $k''$ of $k$.} In this case, since $\bar{H}'$ is a normal
     subgroup of $\bar{H}$, then the simplicity of $\PSL_2(k'')$ tells us that
     $\bar{H}'$ is conjugate to $\PSL_2(k'')$ or $\PGL_2(k'')$ too. We see, as
     in the proof of Lemma \ref{lem:Containing SL2 implies 2-big} (with our
     $r(G_{K(\zeta_l)})$ and $\bar{H}'$ taking the roles of $H$ and $\bar{H}$
     in the proof of Lemma \ref{lem:Containing SL2 implies 2-big})
     that this means that $r(G_{K(\zeta_l)})$ is 2-big, and
     we are done.
  \item {\sl $\bar{H}$ is conjugate to $A_4$, $S_4$, or $A_5$.} In this case 
     the image of $\bar{H}'$ under the isomorphism must be a normal subgroup
     with cyclic quotient; that is, one of $K_4$, $A_4$, $S_4$, $A_5$. In any
     of the cases except for $\bar{H}'\cong K_4$, we then conclude via 
     Proposition \ref{prop: big image, hedral cases} that 
     $r(G_{K(\zeta_l)})$ is 2-big. In the remaining case, we have $i:\bar{H}\isoto A_4$
     and $i(\bar{H}')=K_4\subset A_4$. Then
     $$G_K \overset{r}{\to} r(G_K) \onto \bar{H} \isoto A_4 \onto A_4/K_4 = \Z/3\Z\isoto\langle e^{2\pi i/3}\rangle$$
     is a cubic character of $G_K$, corresponding to a cubic Galois extension $K_1$. Moreover
     the image of $r|_{G_{K_1}}$ in $\PGL_2(k)$ is isomorphic to $K_4$, which is isomorphic
     in turn to the dihedral group with 4 elements. Thus we are done by the first 
     paragraph of the proof.
  \end{itemize}
  Since we are done in every case, the lemma is proved.
\end{proof}

\section{Untwisting: the CM case.}\label{sec:untwisting CM}
\subsection{}We now prove a slight variant of Proposition 5.2.1 of
\cite{BLGG}, working over a CM base field, rather than a totally real
base field. The proof is extremely similar.

Let $F$ be a CM field with maximal totally real subfield
$F^+$. Let $M$ be a cyclic CM extension of $F^+$ of degree $m$, linearly disjoint
from $F$ over $F^+$. Suppose that $\theta :
M^\times\backslash\A_M^\times \to \bb{C}^\times$ is an algebraic
character, and that $\Pi$ is a
RAECSDC automorphic representation of $\GL_{mn}(\A_F)$ for some
$n$. Let $\iota:\Qlbar\isoto\C$ be an isomorphism.
\begin{prop}\label{prop:untwisting CM}
  Assume that there is a continuous representation
  $r:G_{F}\to\GL_n(\Qlbar)$ such that $r|_{G_{FM}}$ is irreducible
  and \[r_{l,\iota}(\Pi)\cong r\otimes
  (\Ind_{G_M}^{G_{F^+}}r_{l,\iota}(\theta))|_{G_F}.\] Then $r$ is
  automorphic. If we assume furthermore that $\Pi$ has level prime to
  $l$, then $r$ is automorphic of level prime to $l$.
\end{prop}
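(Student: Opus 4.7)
The plan is to adapt the proof of Proposition 5.2.1 of \cite{BLGG} to the CM setting; the central idea is to base change up to $FM$, where the tensor product on the right-hand side breaks up as an isobaric sum that can be matched term-by-term with cuspidal representations of $\GL_n$, then to untwist by $\theta$, and finally to descend back down to $F$ by cyclic descent.

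First I would cyclically base change $\Pi$ from $F$ to $FM$ via Arthur--Clozel---noting that $FM/F$ is cyclic of degree $m$ because $F$ and $M$ are linearly disjoint over $F^+$---to obtain an isobaric automorphic representation $\Pi_{FM}$ of $\GL_{mn}(\A_{FM})$ with $r_{l,\iota}(\Pi_{FM})\cong r_{l,\iota}(\Pi)|_{G_{FM}}$. The projection formula, together with linear disjointness, gives
\[
  r_{l,\iota}(\Pi)|_{G_{FM}}\;\cong\; \bigoplus_{\sigma\in\Gal(FM/F)}\bigl( r|_{G_{FM}}\otimes r_{l,\iota}(\theta^{\sigma})|_{G_{FM}}\bigr),
\]
and each summand is irreducible because $r|_{G_{FM}}$ is. Strong multiplicity one for isobaric representations then forces
\[
  \Pi_{FM}\;\cong\;\boxplus_{\sigma\in\Gal(FM/F)}\pi_{\sigma},
\]
with each $\pi_{\sigma}$ a cuspidal automorphic representation of $\GL_n(\A_{FM})$ whose associated Galois representation is the matching summand.

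Next I would pick $\pi$ to be the factor corresponding to $\sigma=\mathrm{id}$ and untwist: letting $\theta_{FM}$ denote the base change of $\theta$ to a Hecke character of $FM$, the cuspidal representation $\pi':=\pi\otimes(\iota\theta_{FM})^{-1}$ satisfies $r_{l,\iota}(\pi')\cong r|_{G_{FM}}$. Since $r|_{G_{FM}}$ is the restriction of $r$ from $G_F$, it is $\Gal(FM/F)$-invariant, and strong multiplicity one gives $(\pi')^{\sigma}\cong\pi'$ for all $\sigma$. Cyclic descent (Arthur--Clozel) then produces a cuspidal automorphic representation $\pi''$ of $\GL_n(\A_F)$ with $\BC_{FM/F}(\pi'')\cong\pi'$. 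Because $\pi''$ is determined only up to twist by a character of $\Gal(FM/F)$, and since any such twist alters $r_{l,\iota}(\pi'')$ by a finite-order character of $G_F$ trivial on $G_{FM}$, I can choose the twist so that $r_{l,\iota}(\pi'')\cong r$.

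The main obstacle is to verify that $\pi''$ carries the RAECSDC (or RACSDC) structure required in order that ``$r$ is automorphic'' in the sense of this paper: one must track the polarization on $\Pi$ through base change, through the untwisting by $\theta_{FM}$, and through the cyclic descent, using the explicit conjugate-self-duality of $\Ind_{G_M}^{G_{F^+}}\theta$ provided by Lemma \ref{lem:char building}(3) to produce a compatible polarization on $r$ with multiplier $\chi$. The ``level prime to $l$'' refinement then follows because cyclic base change, cyclic descent, and the twist by $\theta_{FM}$ (which is unramified at places above $l$ in the intended applications, since $\theta$ was constructed so) all preserve the condition of being unramified at places above $l$.
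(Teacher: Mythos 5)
Your overall strategy (base change to $FM$, split off a cuspidal piece of $\GL_n$, untwist by $\theta$, descend) is the same as the paper's, but your first step contains a genuine gap. You deduce the decomposition $\Pi_{FM}\cong\boxplus_\sigma\pi_\sigma$ into cuspidal representations of $\GL_n(\A_{FM})$ from the decomposition of the Galois representation $r_{l,\iota}(\Pi)|_{G_{FM}}$ ``by strong multiplicity one''. This is not a valid deduction: strong multiplicity one compares two isobaric automorphic representations that you already have in hand, whereas here the putative constituents $\pi_\sigma$ do not yet exist as automorphic objects --- you only have candidate Galois representations for them. A priori $BC_{FM/F}(\Pi)$ could remain cuspidal on $\GL_{mn}$ or break into fewer, larger pieces; reducibility of the attached Galois representation does not force the corresponding reducibility on the automorphic side. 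The paper obtains the decomposition by first proving the self-twist $\Pi\otimes(\kappa\circ\Art_F\circ\det)\cong\Pi$ for a generator $\kappa$ of $\Gal(FM/F)^\vee$ (a legitimate use of strong multiplicity one, since both sides are automorphic), and then running an induction through the intermediate fields of $FM/F$ using the Arthur--Clozel classification of $\kappa$-twist-invariant cuspidal representations as automorphic inductions, with the regularity of $\Pi$ invoked at each stage to pin down the twist-invariance of the descended constituent.

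The second issue is that you defer the verification that $\pi=\Pi_{FM}$ (suitably normalized) is RAECSDC as ``the main obstacle'' without carrying it out, yet this is needed even to define $r_{l,\iota}(\pi)$ and hence to make sense of your untwisting step. In the paper this is the most delicate part of the argument: one shows $\pi^{c,\vee}\cong\pi^{\sigma^i}\otimes(\chi\circ N_{FM/F^+}\circ\det)$ for some $i$ with $2i\equiv 0\ (\mathrm{mod}\ m)$, and the case $i=m/2$ is excluded by an archimedean computation (Lemma 7.1 of \cite{BLGHT}) contradicting the regularity of $\Pi$. Your final step (descending $\pi'$ by cyclic descent and adjusting by a character of $\Gal(FM/F)$) is a workable variant of the paper's cleaner route, which simply observes that $r|_{G_{FM}}$ is automorphic and invokes the solvable descent result Lemma 1.4 of \cite{BLGHT}; but neither version can be reached without repairing the two points above.
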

\begin{proof}Note that \[
  (\Ind_{G_M}^{G_{F^+}}r_{l,\iota}(\theta))|_{G_F}\cong  \Ind_{G_{FM}}^{G_{F}}(r_{l,\iota}(\theta))|_{G_{FM}}).\]
  Let $\sigma$ denote a generator of $\Gal(FM/F)$, and $\kappa$ a
  generator of $\Gal(FM/F)^\vee$. Then we have
  \begin{align*}
    r_{l,\iota}(\Pi\otimes(\kappa\circ\Art_F\circ\det))&=
    r_{l,\iota}(\Pi)\otimes r_{l,\iota}(\kappa\circ\Art_F)\\ &\cong
    r\otimes(
    r_{l,\iota}(\kappa\circ\Art_F)\otimes\Ind_{G_{FM}}^{G_F}(r_{l,\iota}(\theta)|_{G_{FM}})
    \\ & \cong
    r\otimes\Ind_{G_{FM}}^{G_F}(r_{l,\iota}(\kappa\circ\Art_F)|_{G_{FM}}\otimes
    r_{l,\iota}(\theta)|_{G_{FM}})\\ & \cong
    r\otimes\Ind_{G_{FM}}^{G_F}r_{l,\iota}(\theta)|_{G_{FM}}\\ &\cong r_{l,\iota}(\Pi),
  \end{align*}  so that
  $\Pi\otimes(\kappa\circ\Art_F\circ\det)\cong\Pi$. 

We claim that for each intermediate field ${FM}\supset N\supset F$ there
is a regular cuspidal automorphic representation $\Pi_N$ of $\GL_{n[{FM}:N]}(\bb{A}_N)$
such that \[\Pi_N\otimes(\kappa\circ\Art_N\circ\det)\cong\Pi_N\] and
$BC_{N/F}(\Pi)$ is equivalent to
\[\Pi_N\boxplus\Pi_N^\sigma\boxplus\dots\boxplus\Pi_N^{\sigma^{[N:F]-1}}\]
in the sense that for all places $v$ of $N$, the base change from
$F_{v|_F}$ to $N_v$ of $\Pi_{v|_F}$
is \[\Pi_{N,v}\boxplus\Pi_{N,v}^\sigma\boxplus\dots\boxplus\Pi_{N,v}^{\sigma^{[N:F]-1}}.\]
We prove this claim by induction on $[N:F]$. Suppose that ${FM}\supset
M_2\supset M_1\supset F$ with $M_2/M_1$ cyclic of prime degree, and
that we have already proved the result for
$M_1$. Since  \[\Pi_{M_1}\otimes(\kappa\circ\Art_{M_1}\circ\det)\cong\Pi_{M_1}\]we
see from Theorems 3.4.2 and 3.5.1 of \cite{MR1007299} (together with Lemma VII.2.6 of \cite{MR1876802}
  and the main result of \cite{MR672475}) that there is a cuspidal
  automorphic representation  $\Pi_{M_2}$ of $\GL_{n[{FM}:{M_2}]}(\bb{A}_{M_2})$
such that $BC_{{M_2}/F}(\Pi)$ is equivalent to
\[\Pi_{M_2}\boxplus\Pi_{M_2}^\sigma\boxplus\dots\boxplus\Pi_{M_2}^{\sigma^{[{M_2}:F]-1}}.\]
Since $\Pi$ is regular, $\Pi_{M_2}$ is regular. The representation
$\Pi_{M_2}\otimes(\kappa\circ\Art_{M_2}\circ\det)$ satisfies the same
properties (because $\Pi\otimes(\kappa\circ\Art_F\circ\det)\cong\Pi$),
so we see (by strong multiplicity one for isobaric representations) that we must
have \[\Pi_{M_2}\otimes(\kappa\circ\Art_{M_2}\circ\det)\cong\Pi_{M_2}^{\sigma^i}\]
for some $0\le i\le [{FM}:M_2]-1$. If $i>0$
then \[\Pi_{M_2}\boxplus\Pi_{M_2}^\sigma\boxplus\dots\boxplus\Pi_{M_2}^{\sigma^{[{M_2}:F]-1}}\]cannot
be regular (note that of course $\kappa$ is a character of finite
order), a contradiction, so in fact we must have
$i=0$. Thus \[\Pi_{M_2}\otimes(\kappa\circ\Art_{M_2}\circ\det)\cong\Pi_{M_2}\]and
the claim follows.

Let $\pi:=\Pi_{FM}$. Note that the representations $\pi^{\sigma^i}$ for
$0\le i\le m-1$ are pairwise non-isomorphic (because $\Pi$ is
regular). Note also that $\pi\otimes |\det|^{(n-nm)/2}$ is regular
algebraic (again, because $\Pi$ is regular algebraic).

Since $\Pi$ is RAECSDC, there is an algebraic character $\chi$ of
$(F^+)^\times\backslash \A_{F^+}^\times$ such that $\Pi^{c,\vee}\cong
\Pi\otimes(\chi\circ N_{F/F^+}\circ\det)$. It follows (by strong multiplicity one for
isobaric representations) that for some $0\le i\le m-1$
we have \[\pi^{c,\vee}\cong \pi^{\sigma^i}\otimes(\chi\circ
N_{{FM}/F^+}\circ\det).\] Then we have
\begin{align*}
  \pi &\cong (\pi^{c,\vee})^{c,\vee}\\&\cong
  (\pi^{\sigma^i}\otimes(\chi\circ N_{FM/F^+}\circ\det))^{c,\vee}\\&\cong   (\pi^{c,\vee})^{\sigma^i}\otimes(\chi^{-1}\circ
N_{{FM}/F^+}\circ\det))\\ &\cong  (\pi^{\sigma^i}\otimes(\chi\circ
N_{{FM}/F^+}\circ\det))^{\sigma^i}\otimes(\chi^{-1}\circ
N_{{FM}/F^+}\circ\det)) \\ &\cong \pi^{\sigma^{2i}}
\end{align*}
so that either $i=0$ or $i=m/2$. We wish to rule out the latter
possibility. Assume for the sake of contradiction that \[\pi^{c,\vee}\cong \pi^{\sigma^{m/2}}\otimes(\chi\circ
N_{{FM}/F^+}\circ\det).\]  Since $F^+$ is totally real, there is an
integer $w$ such that $\chi|\cdot|^{-w}$ has finite image. Then
$\pi\otimes|\det|^{w/2}$ has unitary central character, so is itself
unitary. Since $\pi\otimes |\det|^{(n-nm)/2}$ is regular
algebraic, we see that for places $v|\infty$ of ${FM}$ the conditions of
Lemma 7.1 of \cite{BLGHT} are satisfied for $\pi_v|\det|_v^{w/2}$, so
that
\begin{align*}
  \pi_v\boxplus\pi_v^{\sigma^{m/2}}&\cong \pi_v\boxplus\pi_v^{c,\vee}\otimes(\chi^{-1}\circ
N_{{FM}/F^+}\circ\det)\\&\cong \pi_v\boxplus
  ((\pi_v\otimes|\det|^{w/2})^{c,\vee}\otimes(|\cdot|^{w/2}\circ\det))\otimes(\chi^{-1}\circ
N_{{FM}/F^+}\circ\det)\\
  &\cong  \pi_v\boxplus
  (\pi_v\otimes|\det|^{w/2})\otimes(|\cdot|^{w/2}\circ\det))\otimes(\chi^{-1}\circ
N_{{FM}/F^+}\circ\det) \\  &\cong  \pi_v\boxplus
  ( \pi_v\otimes(\chi^{-1}|\cdot|^{w}\circ
N_{{FM}/F^+}\circ\det))
\end{align*}
which contradicts the regularity of $\Pi_{v|_F}$. Thus we have $i=0$,
so that \[\pi^{c,\vee}\cong \pi\otimes(\chi\circ
N_{{FM}/F^+}\circ\det).\] Thus $\pi\otimes |\det|^{(n-nm)/2}$ is a RAECSDC
representation, so that we have a Galois representation
$r_{l,\iota}(\pi\otimes |\det|^{(n-nm)/2})$. The condition that $BC_{{FM}/F}(\Pi)$ is equivalent to
\[\pi\boxplus\pi^\sigma\boxplus\dots\boxplus\pi^{\sigma^{m-1}}\]
translates to the fact that
  \[r_{l,\iota}(\Pi)|_{G_{FM}}\cong
  r_{l,\iota}(\pi\otimes|\det|^{(n-nm)/2})\oplus\dots\oplus
  r_{l,\iota}(\pi\otimes|\det|^{(n-nm)/2})^{\sigma^{m-1}}.\] By
  hypothesis, we also have  \[r_{l,\iota}(\Pi)|_{G_{FM}} \cong
  (r|_{G_{FM}}\otimes
  r_{l,\iota}(\theta)|_{G_{FM}})\oplus\dots\oplus(r|_{G_{FM}}\otimes r_{l,\iota}(\theta)|_{G_{FM}}^{\sigma^{m-1}}).\]Since $r|_{G_{FM}}$ is irreducible, there must be an $i$ such
that \[r|_{G_{FM}}\cong r_{l,\iota}(\pi\otimes|\det|^{(n-nm)/2})\otimes
r_{l,\iota}(\theta)|_{G_{FM}}^{\sigma^{-i}},\] so that $r|_{G_{FM}}$ is
automorphic. The result now follows from Lemma 1.4 of \cite{BLGHT}.
\end{proof}

\section{Main Results.}\label{sec:main thm}\subsection{}
Let $l>2$ be a fixed prime. Fix an isomorphism
$\iota:\Qlbar\isoto\C$. In this section, $F^+$ always denotes a
totally real field.

We begin with some preliminary results.

\begin{lemma}\label{lem: existence of potentially non-ordinary lift to
    weight 0}
Let
$\rbar:G_{F^+}\to\GL_2(\Flbar)$ be an irreducible modular
representation. Suppose that $\overline{F^+}^{\ker \ad \rbar}$ does not contain $F^+(\zeta_l)$.  There exists a finite solvable extension of totally real fields
  $L^+/F^+$ such that
  \begin{itemize}
  \item  $L^+$ is linearly disjoint from $\overline{F^+}^{\ker\rbar}(\zeta_l)$
  over $F^+$.
\item $\rbar|_{G_{L_v^+}}$ is trivial for all places $v|l$ of $L^+$.
\item For each place $v|l$, $[L_v^+:\Ql]\ge 2$.
\item There is a regular algebraic cuspidal automorphic representation $\pi$ of
  $\GL_2(\A_{L^+})$ of weight 0 such that
  \begin{itemize}
  \item $\rbar_{l,\iota}(\pi)\cong\rbar|_{G_{L^+}}$.
  \item $\pi$ is unramified at all finite places.
  \item For all places $v|l$ of $L^+$,
    $r_{l,\iota}(\pi)|_{G_{L_v^+}}$ is non-ordinary.
  \end{itemize}
  \end{itemize}
\end{lemma}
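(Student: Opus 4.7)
I would prove this in three stages: (i) reduce to parallel weight $2$ via congruences, (ii) arrange non-ordinarity at $v\mid l$ by introducing cuspidal types at $l$, and (iii) absorb those types, together with the residual image at $l$, into a final solvable base change.

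\textbf{Stage (i).} Since $\rbar$ is modular, after choosing any solvable totally real $F_0^+/F^+$ linearly disjoint from $\overline{F^+}^{\ker\rbar}(\zeta_l)$ I may apply the analogue (for Hilbert modular forms) of ``every newform is congruent to one of weight $2$'' together with the Skinner--Wiles level-lowering trick to produce a regular algebraic cuspidal automorphic representation $\pi_0$ of $\GL_2(\A_{F_0^+})$ of weight $0$ (i.e.\ parallel weight $2$) with $r_{l,\iota}(\pi_0) \cong r$ lifting $\rbar|_{G_{F_0^+}}$, and with level supported only at a controlled finite set of places (away from $l$ or, with further work, only at $l$).

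\textbf{Stage (ii).} At each place $v\mid l$ of $F_0^+$ I now introduce a principal-series cuspidal type $\tau_v$ chosen so that any potentially Barsotti--Tate lift of $\rbar|_{G_{F_{0,v}^+}}$ of type $\tau_v$ is automatically non-ordinary (the local lifting ring of non-ordinary type has precisely the components one needs; this is the cuspidal-types trick recalled in the introduction and made precise in Corollary 3.1.6 of \cite{kis04}). After a further solvable totally real base change $F_1^+/F_0^+$ and a congruence argument, I produce $\pi_1$ on $\GL_2(\A_{F_1^+})$ of parallel weight $2$ whose component at each $v\mid l$ has type $\tau_v$, so that $r_{l,\iota}(\pi_1)|_{G_{F_{1,v}^+}}$ is potentially Barsotti--Tate and non-ordinary at every $v\mid l$.

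\textbf{Stage (iii).} Now I choose a finite solvable totally real extension $L^+/F_1^+$ with the following properties: (a) for every $v\mid l$, $L_v^+$ contains the fixed field of the kernels of the characters defining $\tau_v$, so that $\tau_v$ becomes trivial on $I_{L_v^+}$ and $\pi := \mathrm{BC}_{L^+/F_1^+}(\pi_1)$ is unramified at $v$ (and in particular crystalline rather than merely potentially crystalline); (b) $L_v^+$ contains the fixed field of $\rbar|_{G_{F_{1,v}^+}}$ at each $v\mid l$, so that $\rbar|_{G_{L_v^+}}$ is trivial; (c) $[L_v^+:\Q_l]\ge 2$; (d) $L^+$ is linearly disjoint from $\overline{F^+}^{\ker\rbar}(\zeta_l)$ over $F^+$. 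Weak approximation plus the hypothesis that $\overline{F^+}^{\ker\ad\rbar}\not\supset F^+(\zeta_l)$ make it possible to arrange (a)--(d) simultaneously. Then $\pi:=\mathrm{BC}_{L^+/F_1^+}(\pi_1)$ is cuspidal (cuspidality survives solvable base change for $\GL_2$), regular algebraic of parallel weight $2$, unramified at every finite place, and has $r_{l,\iota}(\pi)|_{G_{L^+_v}}$ Barsotti--Tate at each $v\mid l$. Non-ordinarity is unchanged by restriction to a finite extension because it is detected by the Newton polygon of the crystalline Frobenius.

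\textbf{Main obstacle.} The delicate point is the simultaneous arrangement in Stage~(iii). The base change must absorb the cuspidal type (to produce an unramified $\pi$), trivialize $\rbar|_{G_{L_v^+}}$, and preserve both non-ordinarity and global linear disjointness; the hypothesis that $\overline{F^+}^{\ker\ad\rbar}\not\supset F^+(\zeta_l)$ is exactly what decouples the local trivialization of $\rbar$ at $v\mid l$ from the cyclotomic field, so that the auxiliary local extensions can be glued into a single solvable totally real extension disjoint from $\overline{F^+}^{\ker\rbar}(\zeta_l)$.
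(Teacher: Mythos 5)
Your proposal follows essentially the same route as the paper's proof: a congruence to parallel weight~$2$, the cuspidal-types trick of Corollary~3.1.6 of \cite{kis04} at the places above $l$ to force non-ordinarity, the Skinner--Wiles/Lemma~3.1.4 device to control the auxiliary level, and a final solvable base change to trivialize everything while preserving non-ordinarity and linear disjointness. The paper simply carries out the middle steps explicitly via quaternionic automorphic forms (including the even-degree condition and an auxiliary place $v_0$ to make the level structure neat), but the strategy is the same.
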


\begin{proof}
  Choose a regular algebraic cuspidal automorphic representation $\pi$
  of $\GL_2(\bb{A}_{F^+})$ of weight 0 such that $\rbar_{l,\iota}(\pi)
  \cong \rbar$. Let $r = r_{l,\iota}(\pi):G_{F^+}\to
  \GL_n(\Qlbar)$. Choose $F_1^+/F^+$ a finite solvable extension of
  totally real fields such that:
\begin{itemize}
\item $F_1^+$ is linearly disjoint from $\overline{F^+}^{\ker\rbar}(\zeta_l)$
  over $F^+$.
\item $\rbar|_{G_{F_{1,v}^+}}$ is trivial for all places $v|l$ of $F_1^+$.
\item $\rbar|_{G_{F_{1,v}^+}}$ is unramified for all finite places $v$ of $F_1^+$.
\item $[F_1^+:\Q]$ is even.
\item The base change $\pi_{F_1^+}$ of $\pi$ to $F_1^+$ is unramified
  or Steinberg at each finite place.
\item If $\pi_{F_1^+}$ is ramified at a prime $v\nmid l$ of $F_1^+$, then $\mathbf{N}v \equiv 1 \mod l$.
\end{itemize}
Let $B$ be a
quaternion algebra with centre $F_1^+$ which is ramified at
precisely the infinite places. We now introduce $l$-adic automorphic
forms on $B^{\times}$. Let $K$ be a finite extension of $\Ql$ inside
$\Qlbar$ with ring of integers $\mc{O}$ and residue field $k$, and
assume that $K$ contains the images of all embeddings $F^+_1\into\Qlbar$.
Fix a maximal order $\mc{O}_{B}$ in $B$ and for each finite place $v$
of $F_1^+$ fix an isomorphism $i_v : \mc{O}_{B,v} \isoto
M_2(\mc{O}_{F^+_{1,v}})$. For each embedding $\tau:F_1^+\into \Qlbar$ we
let $\iota\tau$ denote the real place of $F_1^+$ corresponding to the
embedding $\iota \circ \tau$. Similarly, if $\sigma : F_1^+\into
\bb{R}$ is an embedding we let $\iota^{-1} \tau$ denote the
corresponding embedding $\iota^{-1}\circ\tau$.

  For each $v|l$ let $\tau_v$ denote a smooth
  representation of $\GL_2(\mc{O}_{F^+_{1,v}})$ on a finite free
  $\mc{O}$-module $W_{\tau_v}$. Let $\tau$ denote the representation $
  \otimes_{v|l}\tau_v$ of
  $\GL_2(\mc{O}_{F_1^+,l})$ on $ W_{\tau} :=
  \otimes_{v|l}W_{\tau_v}$. 
  Suppose that $\psi: (F_1^+)^{\times}\backslash
  (\bb{A}_{F_1^+}^{\infty})^{\times}\rightarrow \mc{O}^{\times}$ is a
  continuous character so that for each prime $v|l$, the action of the
  centre $\mc{O}_{F^+_{1,v}}^{\times}$ of $\mc{O}_{B,v}^{\times}$ on
  $W_{\tau_v}$ is given by
  $\psi^{-1}|_{\mc{O}_{F^+_{1,v}}^{\times}}$. Such a character $\psi$ is
  necessarily of finite order.
  
  Let $U=\prod_v U_v \subset (B \otimes_{\bb{Q}} \bb{A}^{\infty})^{\times}$ be a
  compact open subgroup with $U_v \subset \mc{O}_{B,v}^{\times}$ for
  all $v$ and $U_v = \mc{O}_{B,v}^{\times}$ for $v |l$. We let
  $S_{0,\tau,\psi}(U,\mc{O})$ denote the space of functions
  \[ f : B^{\times}\backslash
  (B\otimes_{\bb{Q}}\bb{A}^{\infty})^{\times} \rightarrow
  W_{\tau} \] with $f(gu)=\tau(u_l)^{-1}f(g)$
  and $f(gz)=\psi(z)f(g)$ for all $u \in U$, $z \in
  (\bb{A}_{F_1^+}^{\infty})^{\times}$ and $g \in
  (B\otimes_{\bb{Q}}\bb{A}^{\infty})^{\times}$.
  Writing $U=U^l\times U_l$, we let
  $$ S_{0,\tau,\psi}(U_l,\mc{O})=\varinjlim_{U^l}S_{0,\tau,\psi}(U^l
  \times U_l,\mc{O})$$
  and we let
  $(B\otimes_{\bb{Q}}\bb{A}^{l,\infty})^{\times}$ act on this space by
  right translation.

  Let $\psi_{\bb{C}}: (F_1^+)^{\times}\backslash \bb{A}_{F_1^+}^{\times}
  \rightarrow \bb{C}^{\times}$ be the algebraic Hecke character
  defined by $\psi_{\bb{C}}(z)=\iota(\psi(z^{\infty}))$. 
Let $W_{\tau,\bb{C}}=W_{\tau}\otimes_{\mc{O},\iota}\bb{C}$.
 We have an isomorphism of $(B\otimes_{\bb{Q}}\bb{A}^{l,\infty})^{\times}$-modules
\begin{equation}
\label{eq:iso}
 S_{0,\tau,\psi}(U_l,\mc{O})\otimes_{\mc{O},\iota}\bb{C}
\isoto
\bigoplus_{\Pi}
\Hom_{\mc{O}_{B,l}^{\times}}(W_{\tau,\bb{C}}^{\vee},\Pi_l)\otimes
\Pi^{\infty,l} 
\end{equation} 
where the sum is over all automorphic
representations $\Pi$ of $(B\otimes_{\bb{Q}}\bb{A})^{\times}$ of
weight $0$ and central character $\psi_{\bb{C}}$
(see for instance the proof of Lemma 1.3 of \cite{tay-fm2}).

Let $U$ be as above and let $R$ denote a finite set of places of
$F_1^+$ containing all those places $v$ where $U_v \neq
\mc{O}_{B,v}^{\times}$. Let $\bb{T}^{R}$ denote the polynomial algebra
$\mc{O}[T_{v},S_{v}]$ where $v$ runs over all places of $F_1^+$ away
from $l$ and $R$. For such $v$ we let $T_v$ and $S_v$ act on
$S_{0,\tau,\psi}(U,\mc{O})$ via the double coset operators
\[ \left[ U i_v^{-1}\left(\begin{matrix} \varpi_v & 0 \cr 0 &
      1 \end{matrix} \right) U \right]
\mathrm{\ \ and \ \ }  \left[ U i_v^{-1} \left(\begin{matrix} \varpi_v
      & 0 \cr 0 & \varpi_v \end{matrix} \right) U \right]
\]
respectively, where $\varpi_v$ is a uniformizer in $\mc{O}_{F^+_{1,v}}$.

Let $\wt{\pi}$ denote the automorphic representation of
$(B\otimes_{\bb{Q}}\bb{A})^{\times}$ of weight
$0$ corresponding to $\pi_{F_1^+}$
under the Jacquet-Langlands correspondence. 
Choose a place $v_0$ of $F^+_1$ such that
\begin{itemize}
\item $B$ is split at $v_0$;
\item $v_0$ does not split completely in $F^+_1(\zeta_l)$;
\item $\wt{\pi}$ is unramified at $v_0$ and $\ad \rbar(\Frob_{v_0})=1$;
\item for every non-trivial root of unity $\zeta$ in a quadratic extension of $F^+_1$, $\zeta+\zeta^{-1}\not \equiv 2 \mod v_0$.
\end{itemize}
The second and third conditions imply that $H^2(G_{F^+_{1,v_0}},\ad^0
\rbar)$ is trivial and that every deformation of
$\rbar|_{G_{F^+_{1,v_0}}}$ is unramified.  Let $U= \prod_v U_v
\subset (B\otimes_{\bb{Q}}\bb{A}^{\infty})^{\times}$ be the compact
open subgroup with 
\begin{itemize}
\item $U_v = \mc{O}_{B,v}^{\times}$ for all $v|l$,  and all $v\neq v_0$ where $\wt\pi_v$ is unramified;
\item $U_{v_0}=\iota_{v_0}^{-1}\Iw_1(v_0)$ where $\Iw_1(v_0)$ is the subgroup of $\Iw(v_0)$ consisting of all elements whose reduction modulo $v_0$ is unipotent;
\item  $U_v = \iota_v^{-1} \Iw(v)$ if $v\nmid l$ and $\wt\pi_v$ is Steinberg.
\end{itemize}
The subgroup $U$ then satisfies hypothesis 3.1.2 of \cite{kis04} (see
the remarks following (2.1.2) of \cite{kis-fmc}).

Let $R$ denote the set of primes $v\nmid l$ of $F^+_1$ where $U_v \neq \mc{O}_{B,v}^\times$.
For each $v|l$ with $\wt\pi_v$ unramified, let $\tau_v$ denote the
trivial representation of $\mc{O}_{B,v}^{\times}$ on $\mc{O}$. For
each $v|l$ with $\wt\pi_v$ Steinberg, let $\tau_v$ denote the
$\mc{O}$-dual of the representation $\Ind_{\iota_v^{-1} \Iw(v)}^{\mc{O}_{B,v}^{\times}} \mc{O}$ modulo the constants. 
Let
$\chi: G_{F_1^+}^{\ab} \rightarrow \Qlbar^{\times}$ denote the
character $\epsilon \det r_{l,\iota}(\pi)|_{G_{F_1^+}}$ and let $\psi
= \chi \circ \Art_{F_1^+} : \bb{A}_{F_1^+}^{\times}/\overline{
  (F^+_{1,\infty})^{\times}_{>0}(F_1^+)^{\times}} \rightarrow
\Qlbar$. Note that $\chi$ is totally even and hence we may regard
$\psi$ as a character of
$(\bb{A}_{F_1^+}^{\infty})^{\times}/(F_1^+)^{\times} \isoto
\bb{A}_{F_1^+}^{\times}/\overline{
  (F^+_{1,\infty})^{\times}(F_1^+)^{\times}}$. Extending $K$ if
necessary, we can and do assume that $\psi$ is valued in
$\mc{O}^{\times}$. Note that for each $v|l$, $\Hom_{\mc{O}_{B,v}^\times}(\tau_v^{\vee} \otimes_{\mc{O},\iota}\bb{C}, \wt\pi_v) \neq \{0\}$, while for $v\nmid l$ we have $\wt\pi_v^{U_v}\neq\{0\}$. It follows that the subspace of $S_{0,\tau,\psi}(U_l,\mc{O})\otimes_{\mc{O},\iota}\bb{C}$ corresponding to $\wt\pi$ under \eqref{eq:iso} has non-zero intersection with $S_{0,\tau,\psi}(U,\mc{O})\otimes_{\mc{O},\iota}\bb{C}$.
 Further extending $K$ if necessary, we can and do choose a
$\bb{T}^{R}$-eigenform $f$ in
$S_{0,\tau,\psi}(U,\mc{O})$ which lies in this intersection.
The
$\bb{T}^{R}$-eigenvalues on $f$ give rise to an
$\mc{O}$-algebra homomorphism $\bb{T}^{ R}\rightarrow
\mc{O}$ and reducing this modulo $\mf{m}_{\mc{O}}$ gives rise to a
maximal ideal $\mf{m}$ of $\bb{T}^{ R}$.

By Corollary 3.1.6 of \cite{kis04}, after extending $K$, we can and do
choose a collection $\{\tau'_v\}_{v|l}$ where each $\tau'_v$ is a
cuspidal $F^+_{1,v}$-type such that the action of
$\mc{O}_{F^+_1,v}^{\times}$ on $\tau'_v$ is given by
$\psi^{-1}|_{\mc{O}_{F^+_{1,v}}^\times}$ (the trivial character)
and such that $S_{0,\tau',\psi}(U,\mc{O})_{\mf{m}} \neq \{0\}$ where $\tau'=\otimes_{v|l}\tau'_v$.

At each place $v\in R$ choose a non-trivial character $\chi_v :
\mc{O}_{F^+_{1,v}}^\times \to \mc{O}^\times$ which factors through
$k(v)^{\times}$ (where $k(v)$ is the residue field of $v$) and reduces to the
trivial character modulo $\mf{m}_{\mc{O}}$. Here we use the assumption
that $\mathbf{N}v\equiv 1 \mod l$. Let $\chi = \{\chi_v
\}_{v \in R}$. For $v \in R$ let $W_{\chi_v}$ denote the free rank 1
$\mc{O}$-module with action of $\Iw(v)$ given by the character
$\left(\begin{matrix} a & b \\ c & d \end{matrix}\right) \mapsto
\chi_v(ad^{-1})$. Let $W_{\chi}=\otimes_{v\in R} W_{\chi_v}$. Let
$S_{0,\tau',\psi,\chi}(U,\mc{O})$ denote the space of functions
\[ f : B^{\times}\backslash
(B\otimes_{\bb{Q}}\bb{A}^{\infty})^{\times} \rightarrow
W_{\tau}\otimes_{\mc{O}} W_{\chi} \] with $f(gu)=(\tau\otimes
\chi)(u_{l,R})^{-1}f(g)$ and $f(gz)=\psi(z)f(g)$ for all $u \in U$, $z
\in (\bb{A}_{F_1^+}^{\infty})^{\times}$ and $g \in
(B\otimes_{\bb{Q}}\bb{A}^{\infty})^{\times}$. Since $\overline{\chi}_v
= 1$ for each $v \in R$, we have
$S_{0,\tau',\psi,\chi}(U,\mc{O})_{\mf{m}} \neq \{0\}$ by Lemma 3.1.4
of \cite{kis04}. Extending
$\mc{O}$ we can choose a $\bb{T}^{R}$-eigenform $g \in
S_{0,\tau',\psi,\chi}(U,\mc{O})_{\mf{m}} \neq \{0\}$. Applying the
Jacquet-Langlands correspondence, $g$ gives rise to a regular
algebraic cuspidal automorphic representation $\pi_1$ of
$\GL_2(\bb{A}_{F^+_1})$ of weight 0 such that $\rbar_{l,\iota}(\pi_1)
\cong \rbar|_{G_{F^+_1}}$ and $\pi_{1,v}$ is supercuspidal for all
$v|l$, a ramified principal series representation for all $v \in R$
and unramified otherwise. The result follows easily by replacing
$F^+_1$ with an appropriate totally real solvable extension
$L^+/F^+_1$.
\end{proof}

\begin{lemma}
\label{lem: changing weight and level}
Let $F$ be an imaginary CM field with maximal totally real subfield
$F^+$. Let $\pi$ be a RAECSDC automorphic representation of
$\GL_n(\bb{A}_F)$ which is $\iota$-ordinary at all places dividing
$l$. Let $\chi : \bb{A}_{F^+}^{\times}/(F^+)^\times \to \bb{C}^\times$ be an
algebraic character with $\chi_v(-1)$ independent of $v|\infty$ and
 $\pi^c \cong \pi^{\vee}\otimes (\chi\circ \mathbf{N}_{F/F^+}\circ
 \det)$. Let $\wt{\chi}: \bb{A}_{F^+}^{\times}/(F^+)^\times \to
 \bb{C}^\times$ be the finite order algebraic character with
 $r_{l,\iota}(\wt{\chi})$ equal to the Teichm\"uller lift of $\rbar_{l,\iota}(\chi)$.
Suppose that $\rbar_{l,\iota}(\pi)$ is irreducible. 

Let $\lambda' \in (\bb{Z}^n_+)^{\Hom(F,\bb{C})}_{c}$ be a (conjugate-self-dual) weight.  Let $F'/F$ be a
finite extension.  We can find a finite solvable CM extension $L$ of
$F$, linearly disjoint from $F'$ over $F$ and such that there exists a
RAECSDC automorphic representation $\pi'$ of $\GL_n(\bb{A}_L)$ such
that:
\begin{enumerate}
\item $(\pi')^{c}\cong (\pi')^{\vee}\circ(\wt{\chi}\circ
  \mathbf{N}_{L/F^+}\circ\det)$.
\item $\pi'$ is of weight $\lambda'_L$.
\item $\pi'$ is $\iota$-ordinary at all places dividing $l$.
\item $\rbar_{l,\iota}(\pi')\cong \rbar_{l,\iota}(\pi)|_{G_L}$.
\end{enumerate}
\end{lemma}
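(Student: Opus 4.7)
The plan is to carry out a standard Hida-theoretic argument to move within the ordinary Hida family passing through $\pi$, after a preliminary solvable CM base change to put things into a form amenable to treatment by a definite unitary group. The irreducibility hypothesis on $\bar{r}_{l,\iota}(\pi)$ ensures that we are working at a non-Eisenstein maximal ideal, so that the usual Hida control theorem applies cleanly.

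First I would choose a finite solvable CM extension $L/F$, linearly disjoint from $F'$ over $F$, such that: every place of $L^+$ above $l$ splits in $L$; $L^+/\bb{Q}$ has even degree; $L$ contains an imaginary quadratic field in which $l$ splits; the base change $\mathrm{BC}_{L/F}(\pi)$ is unramified or an unramified twist of Steinberg at every finite place not dividing $l$; and the weight $\lambda'_L$ satisfies whatever regularity is required for the Hida-theory input. These are all routine to arrange using weak approximation and \v{C}ebotarev. Replacing $\pi$ by $\mathrm{BC}_{L/F}(\pi)$, we may then transfer to a totally definite unitary group $U/L^+$ attached to $L/L^+$ via Labesse's descent from $\GL_n$ together with the main results on cyclic base change.

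Next I would form the nearly-ordinary Hecke algebra $\bb{T}^{\ord}$ acting on the space of $\Lambda$-adic ordinary automorphic forms on $U$ of appropriate tame level, where $\Lambda$ is the Iwasawa algebra parametrising both the archimedean weight and the similitude character at $l$; this is precisely the setup of $\widehat{\bb{T}}^{T,\ord}(U(\mf{l}^{\infty}),\mc{O})$ in Geraghty's thesis. The irreducibility of $\bar{r}_{l,\iota}(\pi)|_{G_L}$ (which, after possibly enlarging $L$ to preserve it, follows from the irreducibility of $\bar{r}_{l,\iota}(\pi)$) guarantees that $\mathrm{BC}_{L/F}(\pi)$ is $\iota$-ordinary classical and corresponds to a non-Eisenstein maximal ideal $\mf{m}$ of $\bb{T}^{\ord}$. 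Hida's control theorem asserts that $\bb{T}^{\ord}_{\mf{m}}$ is finite and flat over $\Lambda$, and that every arithmetic specialisation of $\Lambda$ lifts to a $\Qlbar$-point of $\bb{T}^{\ord}_{\mf{m}}$ corresponding to a classical $\iota$-ordinary RAECSDC automorphic representation.

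Applying this to the arithmetic specialisation determined by the pair (weight $\lambda'_L$, similitude character $\wt{\chi}|_{G_{L^+}}$) yields a RAECSDC automorphic representation $\pi'$ of $\GL_n(\bb{A}_L)$ satisfying (1)--(3) of the lemma; the congruence (4) is automatic from the fact that $\pi'$ and $\mathrm{BC}_{L/F}(\pi)$ both correspond to $\mf{m}$. The key technical obstacle is ensuring that the Iwasawa algebra is rich enough to parametrise both the weight and the similitude character freely, so that the target character $\wt{\chi}$ (which is a finite-order character congruent to $\chi$ mod $l$, and therefore differs from $\chi$ by a character valued in $1+\mf{m}_{\overline{\mc{O}}}$) corresponds to an arithmetic point on the same irreducible component of $\mathrm{Spec}\,\bb{T}^{\ord}_{\mf{m}}$; this is exactly the role played by the full nearly-ordinary deformation space in the references cited above, and no fundamentally new ingredient is required beyond citing them.
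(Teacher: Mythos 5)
Your core engine (solvable base change to a favourable $L$, transfer to a totally definite unitary group, Hida theory at a non-Eisenstein maximal ideal) is the same one the paper ultimately relies on, but there is a genuine gap at the point you yourself flag as "the key technical obstacle": the passage from the similitude character $\chi$ (algebraic, generally of infinite order) to the finite order character $\wt{\chi}$. The ordinary Hecke algebra $\TUniv$ of Geraghty's thesis is built on the unitary group $U(n)$ attached to $L/L^+$, not on a similitude group, and its arithmetic specialisations produce RACSDC representations of varying weight with a \emph{fixed} polarisation; the similitude character is not a free coordinate of the Iwasawa algebra there. So the specialisation you want -- "the arithmetic point determined by the pair (weight $\lambda'_L$, similitude character $\wt{\chi}|_{G_{L^+}}$)" -- does not exist in the cited framework, and asserting that "no fundamentally new ingredient is required beyond citing them" is where the argument breaks. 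You would need to develop nearly-ordinary families with varying central/similitude character on $GU(n)$, which is a different (and heavier) piece of machinery than what you cite.

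The paper avoids this entirely with an elementary twisting reduction. Using Lemma 4.1.4 of \cite{cht} one chooses an algebraic Hecke character $\psi$ of $F$ with $\psi\psi^c=\chi^{-1}\circ\mathbf{N}_{F/F^+}$, so that $\pi\otimes(\psi\circ\det)$ is RACSDC; after a further solvable base change (linearly disjoint from $F'\,\overline{F}^{\ker\rbar_{l,\iota}(\pi)}$, which is also what preserves irreducibility of the residual representation -- note that merely "enlarging $L$" does not) one may take $\psi$ unramified at all finite places, and ordinarity is preserved under such base change. One then applies Geraghty's weight-changing lemma (Lemma 5.1.7 of \cite{ger}) in the RACSDC setting to get $\pi''$ of weight $\lambda'_L$, unramified everywhere, $\iota$-ordinary, with the right residual representation, and finally twists back by $\wt{\psi}$, the algebraic character whose associated Galois character is the Teichm\"uller lift of $\rbar_{l,\iota}(\psi)$. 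Since $\wt{\psi}\wt{\psi}^c$ corresponds to the Teichm\"uller lift of $\chibar^{-1}\circ\mathbf{N}_{F/F^+}$, the representation $\pi'=\pi''\otimes(\wt{\psi}\circ\mathbf{N}_{L/F}\circ\det)$ acquires exactly the polarisation by $\wt{\chi}$, and conditions (2)--(4) are unaffected by a finite order twist. If you insert this twist-and-untwist step, your proof becomes essentially the paper's.
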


\begin{proof}
  Using Lemma 4.1.4 of \cite{cht}, choose an algebraic character
  $\psi: \bb{A}_F^\times/F^\times \to \bb{C}^\times$ such that $\psi
  \psi^c = \chi^{-1} \circ \mathbf{N}_{F/F^+}$. After replacing $F$ by a
  solvable CM extension, linearly disjoint from $F'\overline{F}^{\ker
    \rbar_{l,\iota}(\pi)}$ over $F$, we may assume that $\psi$ is
  unramified at all finite places. (Lemma 5.1.6 of \cite{ger} shows
  that the ordinarity of $\pi$ is preserved under such a base
  change.) Then $\pi \otimes(\psi \circ \det)$ is RACSDC. Applying Lemma
  5.1.7 of \cite{ger}, we can find a solvable CM extension $L$ of $F$,
  linearly disjoint from $F'$ over $F$, and a RACSDC automorphic
  representation $\pi''$ of $\GL_n(\bb{A}_L)$ of weight $\lambda'_L$
  which is unramified at all finite places, $\iota$-ordinary at all
  places dividing $l$  and satisfies $\rbar_{l,\iota}(\pi') \cong \rbar_{l,\iota}(\pi)|_{G_L}\otimes
  \rbar_{l,\iota}(\psi)|_{G_L}$. Let $\wt{r}_{l,\iota}(\psi) : G_{F}
  \to \mc{O}_{\Qlbar}^{\times}$ be the Teichm\"uller lift of
  $\rbar_{l,\iota}(\psi)$ and let $\wt{\psi} : \bb{A}_{F}^{\times}/F^\times
  \to \bb{C}^\times$ be the algebraic character with
  $r_{l,\iota}(\wt{\psi})=\wt{r}_{l,\iota}(\psi)$. We now take
  $\pi'=\pi''\otimes (\wt{\psi} \circ\mathbf{N}_{L/F}\circ \det)$.
\end{proof}

The following proposition is the main technical result of this paper.

\begin{prop}
  \label{prop: existence of a potential ordinary lift}Let
  $\rbar:G_{F^+}\to\GL_2(\Flbar)$ be an irreducible modular
  representation. Assume further that
  \begin{enumerate}
  \item $l\ge 5$.
  \item $\rbar(G_{F^+(\zeta_l)})$ is $2$-big.
  \item $[\overline{F^+}^{\ker\ad\rbar}(\zeta_l):\overline{F^+}^{\ker\ad\rbar}]>2$.
  \end{enumerate}
  Then there exists a finite solvable extension of totally real fields
  $L^+/F^+$ such that
  \begin{itemize}
  \item $L^+$ is linearly disjoint from
    $\overline{F^+}^{\ker\rbar}(\zeta_l)$ over $F^+$.
  \item $\rbar|_{G_{L_v^+}}$ is trivial for all places $v|l$ of $L^+$.
  \item There is a regular algebraic cuspidal automorphic
    representation $\pi$ of $\GL_2(\A_{L^+})$ of weight $0$ such that
    \begin{itemize}
    \item $\rbar_{l,\iota}(\pi)\cong\rbar|_{G_{L^+}}$.
    \item $\pi$ is unramified at all finite places.
    \item For all places $v|l$ of $L^+$,
      $r_{l,\iota}(\pi)|_{G_{L_v^+}}$ is ordinary.
    \end{itemize}
  \end{itemize}
\end{prop}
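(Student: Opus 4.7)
I would deduce this from Theorem \ref{thm:existence of a lift with specified properties} applied with $n=m=2$ over a quadratic CM extension of a suitable solvable base change of $F^+$, followed by Proposition \ref{prop:untwisting CM} to untwist the resulting $\GL_4$-automorphy and Lemma \ref{lem: changing weight and level} to descend the weight to $0$.

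First, I apply Lemma \ref{lem: existence of potentially non-ordinary lift to weight 0} to obtain a finite solvable totally real extension $L_0^+/F^+$, linearly disjoint from $\overline{F^+}^{\ker\rbar}(\zeta_l)$ and trivialising $\rbar$ at each $v|l$, together with a weight-$0$ cuspidal $\pi_0$ on $\GL_2(\A_{L_0^+})$, unramified everywhere, with $\rbar_{l,\iota}(\pi_0)\cong\rbar|_{G_{L_0^+}}$ and $\rho^{BT}:=r_{l,\iota}(\pi_0)$ non-ordinary at every $v|l$. Then, choosing an imaginary quadratic field $K$ so that $F_1:=L_0^+K$ is a CM field in which every place of $L_0^+$ above $l$ splits, I invoke Lemma \ref{lem:char building} (with $n=2$, $m=2$, over $L_0^+$) to obtain a quadratic CM extension $M/L_0^+$ in which every place above $l$ splits, and characters $\theta,\theta':G_M\to\Qlbar^\times$ congruent mod $l$, such that $\Ind_{G_M}^{G_{L_0^+}}\theta$ is Barsotti-Tate and non-ordinary (lying on the same component of the relevant local crystalline lifting ring as $\rho^{BT}|_{G_{L_{0,v}^+}}$ for each $v|l$), $\Ind_{G_M}^{G_{L_0^+}}\theta'$ is crystalline ordinary of Hodge-Tate weights $\{0,3\}$, and the big-image and avoid-$\zeta_l$ conclusions of Lemma \ref{lem:char building}(2) hold for $\rbar|_{G_{F_1}}\otimes\Ind_{G_M}^{G_{F_1}}\thetabar$.

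Next I apply Theorem \ref{thm:existence of a lift with specified properties} over $F_1$, taking $F'=F_1$, $n=m=2$, $r'=\Ind_{G_M}^{G_{F_1}}\theta$, $r''=\rho^{BT}|_{G_{F_1}}\otimes\Ind_{G_M}^{G_{F_1}}\theta'$, $a_{\tv}$ giving Hodge-Tate weights $\{0,3\}$ for $\rbar$ at each $v|l$, and $R_{\tv}$ the ordinary component. The representation $r''$ is automorphic of level prime to $l$ by automorphic induction (from $\GL_2/M$ to $\GL_4/F_1$) of the $\theta'$-twist of the base change of $\pi_0$ to $M$, and its Hodge-Tate weights are $\{0,1,3,4\}$. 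Hypothesis (9) of the theorem holds because $\theta\equiv\theta'\pmod l$ and the ordinary crystalline local lifting ring (for the trivial residual representation at each $v|l$) is connected, so that for any ordinary lift $\rho_v$ of $\rbar|_{G_{F_\tv}}$ of Hodge-Tate weights $\{0,3\}$, the tensor $\rho_v\otimes\Ind\theta$ lies on the same component as $r''|_{G_{F_\tv}}=\rho^{BT}\otimes\Ind\theta'$. The theorem then yields a lifting $r:G_{F_1}\to\GL_2(\Zlbar)$ of $\rbar|_{G_{F_1}}$, unramified outside $l$, crystalline ordinary of Hodge-Tate weights $\{0,3\}$ at each $v|l$, with $r\otimes\Ind_{G_M}^{G_{F_1}}\theta$ automorphic of level prime to $l$.

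To conclude, Proposition \ref{prop:untwisting CM} applied to the $\GL_4$-automorphic $r\otimes\Ind\theta$ --- with the irreducibility of $r|_{G_{F_1M}}$ guaranteed by the $2$-bigness of $\rbar(G_{F^+(\zeta_l)})$ --- shows that $r$ itself is automorphic of level prime to $l$: $r\cong r_{l,\iota}(\pi_1)$ for some RAECSDC $\pi_1$ of $\GL_2(\A_{F_1})$, necessarily $\iota$-ordinary at each $v|l$. Lemma \ref{lem: changing weight and level} applied to $\pi_1$ with target weight $\lambda'=0$ then yields, after a further solvable CM extension $L/F_1$, a weight-$0$, $\iota$-ordinary, RAECSDC $\pi'$ on $\GL_2(\A_L)$, unramified at all finite places, with $\rbar_{l,\iota}(\pi')\cong\rbar|_{G_L}$. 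Cyclic base change descent --- using that the whole construction is compatible with the extension of $\rbar$ from $G_{L^+}$ to $G_{F^+}$ --- produces the desired $\pi$ on $\GL_2(\A_{L^+})$, where $L^+$ is the maximal totally real subfield of $L$. The hard part, beyond Theorem \ref{thm:existence of a lift with specified properties} itself (whose proof supplies the key finiteness of $R^{\univ}_{\mc{S}''}$ via the $\GL_4$ automorphy lifting theorem of \cite{BLGG}), is orchestrating the successive solvable base changes so that the linear-disjointness, big-image, and local-component-matching hypotheses of all the cited lemmas and theorem remain simultaneously in force throughout.
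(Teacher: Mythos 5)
Your construction tracks the paper's own proof almost exactly up to the point where you obtain an $\iota$-ordinary, weight-$0$, everywhere-unramified RAECSDC representation $\pi'$ over a CM field $L$ with $\rbar_{l,\iota}(\pi')\cong\rbar|_{G_L}$: the same chain (Lemma \ref{lem: existence of potentially non-ordinary lift to weight 0}, Lemma \ref{lem:char building} with $m=n=2$, Theorem \ref{thm:existence of a lift with specified properties} with $r'=\Ind\theta$ and $r''=\rho^{BT}\otimes\Ind\theta'$, Proposition \ref{prop:untwisting CM}, Lemma \ref{lem: changing weight and level}) appears in the same order, and your verification of hypothesis (9) via congruence of $\theta,\theta'$ and connectedness/uniqueness of the relevant local components is essentially the paper's argument.

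The gap is your last step. The statement requires an automorphic representation of $\GL_2(\A_{L^+})$ over the \emph{totally real} field, and "cyclic base change descent --- using that the whole construction is compatible with the extension of $\rbar$" does not deliver this. The lift $r$ produced by Theorem \ref{thm:existence of a lift with specified properties} is a representation of $G_{F_1}$ (the CM field) satisfying only $r^c\cong r^\vee\chi|_{G_{F_1}}$; it is not constructed as the restriction of a representation of the totally real Galois group, and only its \emph{reduction} is $c$-invariant. Consequently there is no reason for $\pi'$ to be isomorphic to its $\Gal(L/L^+)$-conjugate, which is what quadratic base-change descent would require. The paper closes this gap with a further deformation-theoretic argument: it uses the ordinary automorphic lift over the CM field $F_4$ (via Proposition 3.6.3 of \cite{BLGG}) to show that the deformation ring $R_{\mc S}$ for ordinary Barsotti--Tate conjugate-self-dual deformations of $\rbar|_{G_{F_4}}$ is finite over $\mc{O}$; the universal ordinary Barsotti--Tate deformation ring $R_{\rbar|_{G_{F_4^+}}}$ over the totally real field is finite over $R_{\mc S}$ and has Krull dimension at least $1$ (Proposition 3.1.4 of \cite{gee061}), hence has a $\Qlbar$-point $r_4:G_{F_4^+}\to\GL_2(\Qlbar)$. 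Then $r_4|_{G_{F_4}}$ is automorphic by the ordinary automorphy lifting theorem over CM fields (Theorem 5.4.2 of \cite{ger}), and $r_4$ itself is automorphic by Lemma 1.5 of \cite{BLGHT}. Without some argument of this kind (producing an ordinary lift that is genuinely defined over the totally real field and then descending automorphy of its restriction), your proof does not reach the stated conclusion.
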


\begin{proof}
  We firstly apply Lemma \ref{lem: existence of potentially non-ordinary lift to
    weight 0}, to deduce that there is a finite solvable extension of totally real fields
  $F_2^+/F^+$ such that
  \begin{itemize}
  \item  $F_2^+$ is linearly disjoint from $\overline{F^+}^{\ker\rbar}(\zeta_l)$
  over $F^+$.
\item $\rbar|_{G_{F_{2,v}^+}}$ is trivial for all places $v|l$ of
  $F_2^+$.
\item For each place $v|l$, $[F_{2,v}^+:\Ql]\ge 2$.
\item There is a regular algebraic cuspidal automorphic representation $\pi_2$ of
  $\GL_2(\A_{F_2^+})$ of weight 0 such that
  \begin{itemize}
  \item $\rbar_{l,\iota}(\pi_2)\cong\rbar|_{G_{F^+_2}}$.
  \item $\pi_2$ is unramified at all finite places.
  \item For all places $v|l$ of $F_2^+$,
    $r_{l,\iota}(\pi_2)|_{G_{F_{2,v}^+}}$ is non-ordinary.
  \end{itemize}
\end{itemize}
We now employ Lemma \ref{lem:char building} in the following setting:
\begin{itemize}
\item $F^+$ is the present $F_2^+$.
\item $l$ is as in the present setting.
\item $m=n=2$.
\item $\Favoid=F_2^+\overline{F^+}^{\ker\ad\rbar}(\zeta_l)$.
\item $T=\emptyset$.
\item $\eta=\eta'=1$.
\item $\{h_{1,\tau},h_{2,\tau}\}=\{0,1\}$ for each
  $\tau$. Furthermore, for each place $v|l$, there is at least one
  $\tau$ corresponding to $v$ with $h_{1,\tau}=0$, and at least one
  $\tau$ corresponding to $v$ with $h_{1,\tau}=1$.
\item $h'_{1,\tau}=0$ and $h'_{2,\tau}=3$ for all $\tau$.
\item $w=1$
\item $w'=3$
\end{itemize}
We obtain a quadratic CM extension $M/F_2^+$ together with two
continuous characters \[\theta,\theta':G_M\to\Zlbar^\times\]such that
\begin{enumerate}
\item $\theta$, $\theta'$ are congruent modulo $l$.
\item
  $(\rbar|_{G_{F_2^+}}\otimes\Ind_{G_M}^{G_{F_2^+}}\bar{\theta})(G_{F_2^+(\zeta_l)})$
  is big.
\item $\zeta_l\notin\overline{F_2^+}^{\ad(\rbar|_{G_{F_2^+}}\otimes\Ind_{G_M}^{G_{F_2^+}}\bar{\theta})}$.
\item $(\Ind_{G_M}^{G_{F^+_2}} \theta)\cong (\Ind_{G_M}^{G_{F^+_2}} \theta)^\vee
     \tensor \eps^{-1}.$
   \item $(\Ind_{G_M}^{G_{F^+_2}} \theta')\cong (\Ind_{G_M}^{G_{F^+_2}}
     \theta')^\vee\tensor \eps^{-3}\wtilde^{2}.$
   \item For each $v$ above $l$, the representation $(\Ind_{G_M}^{G_{F^+_2}} \theta)|_{G_{F^+_{2,v}}}$
is conjugate to a representation which breaks up as a direct sum of characters:

$$(\Ind_{G_M}^{G_{F^+_2}} \theta)|_{G_{F^+_{2,v}}} \cong 
               \chi^{(v)}_1\oplus\chi^{(v)}_2 $$
where, for each $i=1,2$, and each embedding $\tau:F^+_{2,v}\to\Qbar_l$ we have that:
$$\HT_\tau(\chi^{(v)}_i) = h_{i,\tau}.$$ In particular, the
representation $(\Ind_{G_M}^{G_{F^+_2}} \theta)|_{G_{F^+_{2,v}}}$ is
Barsotti-Tate and non-ordinary. Similarly, the representation
$(\Ind_{G_M}^{G_{F^+_2}} \theta')|_{G_{F^+_{2,v}}}$ is conjugate to a
representation which breaks up as a direct sum of characters:
$$(\Ind_{G_M}^{G_{F^+_2}} \theta')|_{G_{F^+_{2,v}}} \cong 
               \chi'^{(v)}_1\oplus\chi'^{(v)}_2$$
where, for each $i=1,2$, and each embedding $\tau:F^+_{2,v}\to\Qbar_l$ we have that:
$$\HT_\tau(\chi'^{(v)}_i) = h'_{i,\tau}.$$ Thus by the choice of the
$h'_{i,\tau}$, $\chi'^{(v)}_1|_{I_{F^+_{2,v}}}$ has finite order,
and $\chi'^{(v)}_2|_{I_{F^+_{2,v}}}$ is
a finite order character times $\epsilon^{-3}$, so that
$(\Ind_{G_M}^{G_{F^+_2}} \theta')|_{G_{F^+_{2,v}}}$ is ordinary.
\end{enumerate}
Let $F^+_3/F^+_2$ be a solvable extension of totally real
fields such that
\begin{itemize}
\item $F^+_3$ is linearly disjoint from $\overline{F^+}^{\ker\rbar}(\zeta_l)$ over
$F^+$, and
\item $(\Ind_{G_M}^{G_{F^+_2}} \theta)|_{G_{F^+_3}}$ and
  $(\Ind_{G_M}^{G_{F^+_2}} \theta')|_{G_{F^+_3}}$ are both unramified
  at all places of $F^+_3$ not lying over $l$, and crystalline at all
  places dividing $l$.
\item If $v$ is a place of $F_3^+$ lying over $l$, then
  $(\Ind_{G_M}^{G_{F^+_2}} \thetabar)|_{G_{F^+_3,v}}$ is trivial.
\item If $v$ is a place of $F^+_3$ lying over $l$, then $F^+_{3,v}$
  contains a primitive $l$-th root of unity.
\end{itemize}
Let $F_3/F_3^+$ be a quadratic CM extension which is linearly disjoint
from $M\overline{F^+}^{\ker\rbar}(\zeta_l)$ over $F^+$, and in which
all places of $F_3^+$ lying over $l$ split completely. We choose
$K\subset\Qlbar$ a finite extension of $\Ql$ with ring of integers
$\bigO$ and residue field $k$. Assume that $K$ is sufficiently large
that it contains the image of every embedding $F_3\into\Qlbar$, and
the images of $\theta$, $\theta'$ and $r_{l,\iota}(\pi_2)$. We now
regard $r_{l,\iota}(\pi_2)$, $(\Ind_{G_M}^{G_{F^+_2}}
\theta)|_{G_{F^+_3}}$ and $(\Ind_{G_M}^{G_{F^+_2}}
\theta')|_{G_{F^+_3}}$ as representations to $\GL_2(\bigO)$, and
(after conjugating if necessary) we can and do suppose that
$(\Ind_{G_M}^{G_{F^+_2}}
\bar{\theta})|_{G_{F^+_3}}=(\Ind_{G_M}^{G_{F^+_2}}
\bar{\theta}')|_{G_{F^+_3}}$. Note that there is a character
$\chi_{\pi_2}:G_{F^+_2}\to\bigO^\times$ such
that \[r_{l,\iota}(\pi_2)\cong r_{l,\iota}(\pi_2)^\vee\chi_{\pi_2}.\]

We now apply Theorem \ref{thm:existence of
  a lift with specified properties} in the following setting:
  \begin{itemize}
  \item $m=n=2$, and $l$ is as it has been throughout this section.
  \item $F=F'=F_3$.
  \item $\rbar:=\rbar|_{G_{F_3}}$.
  \item $\chi=\epsilon^{-2}\tilde{\omega}^{2}\chi_{\pi_2}|_{G_{F^+_3}}$.
  \item $\chi'=\epsilon^{-1}$.
  \item $\chi''=\epsilon^{-3}\tilde{\omega}^{2}\chi_{\pi_2}|_{G_{F^+_3}}\delta_{F_3/F_3^{+}}$.
  \item $\tilde{S}_l$ is any set of places of $F_3$ consisting of exactly
    one place above each place in $S_l$.
  \item For each place $\tv\in\tilde{S}_l$, the element
    $a_{\tv}\in(\Z^2_+)^{\Hom(F_{3,\tv},\Qlbar)}$ is given by
    $a_{\tv,1}=2$, $a_{\tv,2}=0$.
  \item $R_\tv$ is the unique ordinary component of
    $R^{\mathbf{v}_{a_\tv},cr}_{\rbar|_{G_{F_{3,\tv}}}}$ (note that
    $\rbar|_{G_{F_{3,\tv}}}$ is trivial, and
    $\bar{\epsilon}|_{G_{F_{3,\tv}}}$ is trivial, so there is a unique ordinary
    component by Lemma 3.4.3 of \cite{ger}).
  \item $r'=(\Ind_{G_M}^{G_{F^+_2}} \theta)|_{G_{F_3}}$.
  \item $r''=r_{l,\iota}(\pi_2)|_{G_{F_3}}\otimes(\Ind_{G_M}^{G_{F^+_2}} \theta')|_{G_{F_3}}$.
  \end{itemize}
We now check that the hypotheses of Theorem \ref{thm:existence of a lift with specified
    properties} hold.
  \begin{itemize}
  \item[(1),(3):] That $\rbar^c\cong\rbar^\vee\chibar|_{G_{F_3}}$, and $\rbar$
    is odd, follow directly from the definitions and the fact that
    $\rbar$, being modular, is odd (see the discussion of section \ref{subsubsec: oddness totally real}). That $(r')^c \cong (r')^\vee \chi'|_{G_{F_3}}$ follows from (4) above.
  \item[(2),(4)] That $\rbar,r',r'',\chi,\chi'$ and $\chi''$ are unramified away from $l$ and $r'$ is crystalline above $l$ follows from the choice of $F_3$ and properties of $\pi_2$.
   \item[(5)] $r''$ is automorphic of level prime to $l$ by Proposition
    5.1.3 of \cite{BLGG}. It satisfies $(r'')^c \cong (r'')^{\vee}\chi''|_{G_{F_3}}$ by construction.
     \item[(6)] $\rbar''=\rbar\otimes\rbar'$ because $\bar{\theta}=\bar{\theta}'$
    and $r_{l,\iota}(\pi_2)|_{G_{F_3}}=\rbar$. 
	By definition we have $\chi''|_{G_{F_3}} =
        \chi|_{G_{F_3}}\chi'|_{G_{F_3}}$.
  \item[(7), (8)] That $\rbar''({G_{F_3(\zeta_l)}})$ is big and
    $\bar{F}^{\ker\ad\rbar''}$ does not contain $\zeta_l$ follow from property
    (2) of Lemma \ref{lem:char building}, the assumptions on
    $\rbar$ and the choice of $F_3$.
  \item[(9)] For all places $v\in\tilde{S}_l$, if
    $\rho_v:G_{F_{3,\tv}}\to\GL_2(\bigO)$ is a lifting of
    $\rbar|_{G_{F_{3,\tv}}}$ corresponding to a closed point on
    $R_\tv$, then $\rho_v$ is ordinary (by the definition of $R_\tv$), so
    $\rho_v\sim(\Ind_{G_M}^{G_{F^+_2}} \theta')|_{G_{F_{3,\tv}}}$ by
    Lemma 3.4.3 of \cite{ger} (because both representations are
    crystalline and ordinary with the same Hodge-Tate weights, and
    trivial reduction). Similarly,
    $r'|_{G_{F_{3,\tv}}}=(\Ind_{G_M}^{G_{F^+_2}}
    \theta)|_{G_{F_{3,\tv}}}\sim r_{l,\iota}(\pi_2)|_{G_{F_{3,\tv}}}$
    by Proposition 2.3 of \cite{MR2280776} (because both
    representations are Barsotti-Tate and non-ordinary with trivial
    reduction). By the results of sections 3.3 and 3.4 of \cite{BLGG},
    it follows that $\rho_v\otimes r'|_{G_{F_{3,\tv}}}\sim
    (\Ind_{G_M}^{G_{F^+_2}} \theta')|_{G_{F_{3,\tv}}}\otimes
    r_{l,\iota}(\pi_2)|_{G_{F_{3,\tv}}}\sim
    r_{l,\iota}(\pi_2)|_{G_{F_{3,\tv}}}\otimes (\Ind_{G_M}^{G_{F^+_2}}
    \theta')|_{G_{F_{3,\tv}}} = r''|_{G_{F_{3,\tv}}}$.
  \end{itemize}
We conclude that, after possibly extending $\mc{O}$, there is a continuous lifting
$r:G_{F_3}\to\GL_2(\bigO)$ of $\rbar|_{G_{F_3}}$ such that:
\begin{itemize}
\item $r$ is unramified at all places of $F$ not dividing $l$.
\item $r|_{G_{F_{3,\tv}}}$ is crystalline and ordinary at each place
  $\tv\in\tilde{S}_l$, with Hodge-Tate weights $0$ and $3$.
\item $r^c\cong r^\vee\chi|_{G_F}$.
\item $r\otimes(\Ind_{G_M}^{G_{F_2^+}}\theta)|_{G_{F_3}}$ is
  automorphic of level prime to $l$.
\end{itemize}
Applying Proposition \ref{prop:untwisting CM} (noting that
$r|_{G_{MF_3}}$ is certainly irreducible, because $\rbar|_{G_{MF_3}}$
is irreducible), we deduce that in fact
\begin{itemize}
\item $r$ is automorphic of level prime to $l$.
\end{itemize}
Since, in addition, $r|_{G_{F_{3,\tv}}}$ is ordinary for all $\tv\in\tilde{S}_l$, it
follows from Lemma 5.2.1 of \cite{ger} 
that in fact
\begin{itemize}
\item $r$ is $\iota$-ordinarily automorphic of level prime to $l$.
\end{itemize}
By Lemma \ref{lem: changing weight and level} we can and do choose a
solvable extension $F_4^+/F_3^+$ of totally real fields together with
a RAECSDC automorphic representation $\pi_4$ of $GL_2(\bb{A}_{F_4})$
of weight 0 where $F_4=F_4^+F_3$ such that \begin{itemize}
\item $F^+_4$ is linearly disjoint from $F_3\overline{F^+}^{\ker\rbar}(\zeta_l)$ over
$F_3^+$.
\item $\pi_4$ is unramified at all finite places and $\iota$-ordinary
\item $\rbar_{l,\iota}(\pi)\cong \rbar|_{G_{F_4}}$. 
\item $r_{l,\iota}(\pi_4)^c \cong r_{l,\iota}(\pi_4)^\vee
  \epsilon^{-1}\wt{\omega}\wt{\chi}|_{G_{F_4}}$ where $\wt{\chi}$ is the
  Teichm\"uller lift of $\chibar$.
\end{itemize}
Enlarging $\bigO$ if necessary, we may assume that
$r_{l,\iota}(\pi_4)$ is valued in $\GL_2(\bigO)$.  Let $\chi_{\pi_4} =
\epsilon^{-1}\wt{\omega}\wt{\chi}|_{G_{F_4^+}}$. 

We now consider two deformation problems, one for $\rbar|_{G_{F_4^+}}$
and one for $\rbar|_{G_{F_4}}$. Let $R_{\rbar|_{G_{F^+_4}}}$ be the
universal deformation ring for ordinary Barsotti-Tate deformations of
$\rbar|_{G_{F^+_4}}$ which have determinant $\chi_{\pi_4}$ and which
are unramified outside $l$. Let $S'_l$ be the set of places of $F_4^+$
above $l$, and let $\tilde{S}'_l$ be the set of places of $F_4$ which
lie above places in $\tilde{S}_l$. We can and do extend
$\rbar|_{G_{F_4}}$ to a representation
$\rbar_4:G_{F_4^+}\to\G_2(k)$. For each place $\tv\in\tilde{S}'_l$,
let $R_v$ be the unique ordinary component of the Barsotti-Tate
lifting ring for $\rbar|_{G_{F_{4,\tv}}}$. Let $\mc{S}$ be the
deformation problem (in the sense of section \ref{subsubsec: global
  deformation
  rings}) \[(F_4/F_4^+,S'_l,\tilde{S}'_l,\bigO,\rbar_4,\chi_{\pi_4},\{R_v\}_{v
  \in S_l'}).\] Then by Proposition 3.6.3 of \cite{BLGG}, the
corresponding universal deformation ring $R_{\mathcal{S}}$ is a finite
$\bigO$-algebra. Furthermore, $R_{\rbar|_{G_{F^+_4}}}$ is a finite
$R_{\mathcal{S}}$-algebra in a natural fashion (cf. section 7.4 of
\cite{GG}). In addition, by Proposition 3.1.4 of \cite{gee061}, $\dim
R_{\rbar|_{G_{F^+_4}}}\ge 1$. Thus $R_{\rbar|_{G_{F^+_4}}}$ is a
finite $\bigO$-algebra of rank at least one (cf. the proof of Theorem
4.2.8 of \cite{MR2459302}), and so it has a $\Qlbar$-point, which
corresponds to a deformation $r_4:G_{F^+_4}\to\GL_2(\Qlbar)$ of
$\rbar|_{G_{F^+_4}}$ which is Barsotti-Tate and ordinary at each place
dividing $l$, and which is unramified outside $l$. Furthermore, by
Theorem 5.4.2 of \cite{ger}, $r_4|_{G_{F_4}}$ is automorphic, so that
$r_4$ is automorphic by Lemma 1.5 of \cite{BLGHT}, as
required.  
\end{proof}

We can prove a similar result more directly in the case that $\rbar$
is induced from a CM extension.

\begin{prop}\label{prop: potential ordinary lift in the dihedral
    case.}
  Suppose that $l\ge 3$ is a prime, and that $M/F^+$ is a quadratic
  extension, with $M$ a CM field. Let $\bar{\theta}:G_M\to\Flbar^\times$ be
  a continuous character that does not extend to $G_{F^+}$, so that
  $\rbar:=\Ind_{G_M}^{G_{F^+}}\bar{\theta}$ is an irreducible modular
representation. Then there is a finite solvable extension $L^+/F^+$
such that
  \begin{itemize}
  \item $L^+$ is linearly disjoint from
$M\overline{F^+}^{\ker\rbar}(\zeta_l)$ over $F^+$.
  \item There is an $\iota$-ordinary regular algebraic cuspidal
automorphic representation $\pi$ of $\GL_2(\A_{L^+})$ of weight $0$
and level prime to $l$ such that
    \begin{itemize}
    \item $\rbar_{l,\iota}(\pi)\cong\rbar|_{G_{L^+}}$.
    \item $\pi$ is unramified at all finite places.
    \end{itemize}
  \end{itemize}
\end{prop}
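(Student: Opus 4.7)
The plan is to construct $\pi$ as the automorphic induction, from a suitable CM quadratic extension $L/L^+$, of an algebraic Hecke character lifting $\bar{\theta}|_{G_L}$ whose Hodge--Tate weights at the places above $l$ are chosen so that the induced representation is crystalline of weight $0$ and ordinary.

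First, I would pick a finite solvable totally real extension $L^+/F^+$, linearly disjoint from $M\cdot\overline{F^+}^{\ker\rbar}(\zeta_l)$ over $F^+$, such that, setting $L := ML^+$, every place $v$ of $L^+$ above $l$ splits in $L$ and is unramified over $\Q_l$, and for every place $w$ of $L$ above $l$ the restriction $\bar{\theta}|_{I_{L_w}}$ is trivial. Such an $L^+$ is obtained by combining the local splitting requirements at places above $l$ (absorbing any inert or ramified behaviour of $M/F^+$ into $L^+_v$) with a further solvable extension killing the ramification of the finite order character $\bar{\theta}$ at places above $l$. I would then, following Step 3 of the proof of Lemma \ref{lem:char building} (a variant of Lemma 2.2 of \cite{hsbt}), construct an algebraic character $\theta: G_L \to \Qlbar^\times$ lifting $\bar{\theta}|_{G_L}$ that is unramified at every finite place not above $l$ and satisfies: fixing a partition of the places of $L$ above $l$ into a set $\Sigma$ and its complex-conjugate set $\Sigma^c$ under $\Gal(L/L^+)$, $\theta|_{G_{L_w}}$ is crystalline unramified with $\HT_\tau(\theta|_{G_{L_w}}) = 0$ for every $\tau : L_w \hookrightarrow \Qlbar$ and every $w \in \Sigma$, while $\theta|_{G_{L_w}}$ is crystalline with $\HT_\tau(\theta|_{G_{L_w}}) = 1$ for every $\tau$ and every $w \in \Sigma^c$.

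Because $\theta$ and $\theta\circ c$ have different Hodge--Tate weights, $\theta$ does not descend through the norm, and so the classical theory of automorphic induction for quadratic CM extensions produces a cuspidal automorphic representation $\pi$ of $\GL_2(\A_{L^+})$ with $r_{l,\iota}(\pi) \cong \Ind_{G_L}^{G_{L^+}}\theta$; the algebraicity of $\theta$ makes $\pi$ regular algebraic. For each place $v|l$ of $L^+$ splitting as $v = w\bar w$ in $L$ with $w \in \Sigma$, we have $(\Ind\theta)|_{G_{L^+_v}} \cong \theta|_{G_{L_w}} \oplus \theta|_{G_{L_{\bar w}}}$, the direct sum of a crystalline unramified character of Hodge--Tate weight $0$ and a crystalline character of Hodge--Tate weight $1$—a crystalline representation of Hodge--Tate weights $\{0,1\}$ that is manifestly ordinary. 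Hence $\pi$ is of weight $0$, is $\iota$-ordinary at $v$, and (since $L^+_v/\Q_l$ is unramified and the local Galois representation is crystalline of Hodge--Tate weights $\{0,1\}$) $\pi_v$ is unramified; unramifiedness at finite places away from $l$ follows from that of $\theta$. Finally $\rbar_{l,\iota}(\pi) \cong \Ind_{G_L}^{G_{L^+}}\bar{\theta}|_{G_L} \cong \rbar|_{G_{L^+}}$ by construction.

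The only genuine technical obstacle is the class-field-theoretic construction of $\theta$ realising the prescribed local Hodge--Tate data at places above $l$ while lifting $\bar{\theta}|_{G_L}$; this is handled by exactly the same machinery as in Step 3 of the proof of Lemma \ref{lem:char building}, specialised to the case $m = 2$ and considerably simpler because no big-image condition enters and because we are free to choose $L^+$ as we please.
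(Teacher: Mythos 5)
Your construction is essentially the paper's: the paper likewise realises $\pi$ as the automorphic induction (Theorem 4.2 of \cite{MR1007299}) of an algebraic character $\theta$ lifting $\bar\theta$, produced via Lemma 4.1.6 of \cite{cht} with Hodge--Tate weight $0$ at one place above each $v|l$ and weight $1$ at its conjugate, after a preliminary solvable base change splitting the places above $l$. The only difference is organisational: the paper first lifts $\bar\theta$ over $F_1^+M$ and then makes a second solvable base change to render $\theta$ crystalline at $l$ and unramified elsewhere, whereas you arrange all the local conditions on $L^+$ up front.

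One condition is missing from your write-up: you must also require $L/L^+$ to be unramified at \emph{all} finite places (the paper imposes exactly this on $F_2^+M/F_2^+$). Unramifiedness of $\theta$ away from $l$ is not by itself enough to make $\pi$ unramified there, since at a finite place $v$ where $L/L^+$ ramifies the induced representation $\Ind_{G_{L_w}}^{G_{L^+_v}}\theta|_{G_{L_w}}$ contains the ramified quadratic character of $G_{L^+_v}$ cut out by $L_w$ and is therefore ramified even for unramified $\theta$. This is repaired by a further solvable base change of the kind you are already making, so it does not affect the viability of the argument.
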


\begin{proof} We construct $\pi$ as an automorphic induction. Choose
$F_1^+/F^+$ a solvable extension of totally real fields such
that \begin{itemize}
  \item $F^+_1$ is linearly disjoint from
$M\overline{F^+}^{\ker\rbar}(\zeta_l)$ over $F^+$.
  \item Every place $v|l$ of $F^+_1$ splits in $F_1^+M$.
  \end{itemize} We now apply Lemma 4.1.6 of \cite{cht} with
  \begin{itemize}
  \item $F=F^+_1M$.
  \item $S$ the set of places of $F$ dividing $l$.
  \item $\bar{\theta}$ equal to our $\bar{\theta}|_{G_F}$.
  \item
    $\chi=\epsilon^{-1}\widetilde{(\det\rbar|_{G_{F^+_1}}\bar{\epsilon})}$,
    where a tilde denotes the Teichm\"{u}ller lift.
  \item For each place $v|l$ of $F^+_1$, write the places of $F$ lying
    over $v$ as $\tv$ and
    $\tv^c$. Then \[\psi_\tv=\widetilde{\bar{\theta}|_{G_{F_\tv}}},\]and \[\psi_{\tv^c}=\epsilon^{-1}\widetilde{\bar{\epsilon}\bar{\theta}|_{G_{F_\tv^c}}}.\]
  \end{itemize} We conclude that there is an algebraic character
  $\theta:G_{F^+_1M}\to\Zlbar^\times$ lifting $\bar{\theta}$, such
  that $\Ind_{G_{F^+_1M}}^{G_{F^+_1}}\theta$ is ordinary and
  potentially Barsotti-Tate.  Choose $F_2^+/F_1^+$ a solvable
  extension of totally real fields such that \begin{itemize}
  \item $F^+_2$ is linearly disjoint from
    $M\overline{F^+}^{\ker\rbar}(\zeta_l)$ over $F^+$.
  \item $\theta|_{G_{F^+_2M}}$ is crystalline at all places dividing
    $l$, and unramified at all other places.
  \item $F^+_2M/F^+_2$ is unramified at all finite places.
  \end{itemize} Then by Theorem 4.2 of \cite{MR1007299}, there is a
  regular algebraic cuspidal automorphic representation $\pi$ of
  $\GL_2(\A_{F^+_2})$ of weight $0$ and level prime to $l$ such
  that \[r_{l,\iota}(\pi)\cong\Ind_{G_{F^+_2M}}^{G_{F^+_2}}\theta|_{G_{F^+_2M}}.\]The
  result follows, as $\pi$ is $\iota$-ordinary by Lemma 5.2.1 of
  \cite{ger}.
\end{proof}

We now deduce the main results, essentially by combining the previous
results with those of \cite{gee061}. For the terminology of inertial
types, see section 3 of \cite{gee061} (although note the slightly
different conventions for Hodge-Tate weights in force there).

\begin{thm}
  \label{thm: main result on existence of ordinary lifts with types
    everywhere.} Let $l\ge 3$ be prime, and let $F^+$ be a totally
  real field. Let $\rbar:G_{F^+}\to\GL_2(\Flbar)$ be irreducible and
  modular. Fix a character $\psi:G_{F^+}\to\Qlbar^\times$ such that
  $\epsilon\psi$ has finite order, and $\bar{\psi}=\det\rbar$. Let
  $S$ denote a finite set of finite places of $F^+$ containing all
  places at which $\rbar$ or $\psi$ is ramified, and all places
  dividing $l$.

  For each place $v\in S$, fix an inertial type $\tau_v$ of
  $I_{F^+_v}$ on a $\Qlbar$-vector space, of determinant
  $(\psi\epsilon)|_{I_{F^+_v}}$. Assume that for each place $v\in
  S$, $v\nmid l$, $\rbar|_{G_{F^+_v}}$ has a lift of type $\tau_v$
  and determinant $\psi|_{G_{F^+_v}}$, and for each place $v\in S$,
  $v| l$, $\rbar|_{G_{F^+_v}}$ has a potentially Barsotti-Tate lift
  of type $\tau_v$ and determinant $\psi|_{G_{F^+_v}}$. For each place
  $v\in S$, we let $R_v$ denote an irreducible component of the
  corresponding lifting ring $R^{\square,\psi,\tau_v}[1/l]$ for
  (potentially Barsotti-Tate) lifts of type $\tau_v$ and determinant
  $\psi|_{G_{F^+_v}}$.

Assume further that
  \begin{itemize}
  \item Either
    \begin{itemize}
    \item
      \begin{itemize}
      \item $l\ge 5$.
  \item $\rbar(G_{F^+(\zeta_l)})$ is $2$-big.
  \item $[\overline{F^+}^{\ker\ad\rbar}(\zeta_l):\overline{F^+}^{\ker\ad\rbar}]>2$.
      \end{itemize}Or:

    \item
      \begin{itemize}
      \item There is a quadratic CM extension $M/F^+$, with $M$ not
        equal to the quadratic extension of $F^+$ in $F^+(\zeta_l)$,
        and a continuous character $\bar{\theta}:G_M\to\Flbar^\times$
        such that $\rbar=\Ind_{G_M}^{G_{F^+}}\bar{\theta}$.
      \end{itemize}
    \end{itemize}

  \end{itemize}
Then there is a continuous representation
$r:G_{F^+}\to\GL_2(\Zlbar)$ lifting $\rbar$ of determinant $\psi$ such that
\begin{itemize}
\item $r$ is modular.
\item $r$ is unramified at all places $v\notin S$.
\item For each place $v|l$ of $F^+$, $r|_{G_{F^+_v}}$ is
  potentially Barsotti-Tate of type $\tau_v$ (and indeed corresponds
  to a point of $R_v$).
\item For each place $v\in S$, $v\nmid l$, $r|_{G_{F^+_v}}$ has type $\tau_v$ (and indeed corresponds
  to a point of $R_v$).
\end{itemize}
\end{thm}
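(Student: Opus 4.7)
The strategy is to combine the potential ordinary lift results (Propositions~\ref{prop: existence of a potential ordinary lift} and~\ref{prop: potential ordinary lift in the dihedral case.}) with the base-change/deformation-ring argument of \cite{gee061} sketched in the introduction. In either case of the hypothesis, first apply the appropriate potential ordinary lift proposition to obtain a finite solvable totally real extension $L_0^+/F^+$, linearly disjoint from $\overline{F^+}^{\ker\rbar}(\zeta_l)$ over $F^+$, together with an $\iota$-ordinary RAESDC automorphic representation $\pi_0$ of $\GL_2(\A_{L_0^+})$ of weight $0$ and level prime to $l$, with $\rbar_{l,\iota}(\pi_0)\cong\rbar|_{G_{L_0^+}}$.

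Next, enlarge $L_0^+$ to a finite solvable totally real extension $L^+/F^+$ such that: $L^+$ is still linearly disjoint from $\overline{F^+}^{\ker\rbar}(\zeta_l)$ over $F^+$; $\rbar|_{G_{L_w^+}}$ is trivial at all places $w \mid l$; every type $\tau_v$ becomes trivial when restricted to the inertia at each place of $L^+$ above $v$; and at each place in $S$ not above $l$, the local lifts determined by the components $R_v$ become unipotently ramified (so that, after solvable base change, they are unramified). Using Lemma~\ref{lem: changing weight and level} and the Skinner--Wiles solvable descent / cuspidal-type tricks recalled in the introduction, replace $\pi_0$ by an $\iota$-ordinary automorphic $\pi$ over $L^+$ realising a modular Barsotti-Tate ordinary lift of $\rbar|_{G_{L^+}}$, unramified at all finite places away from $l$, with the right determinant (twisted to match a Teichm\"uller lift of $\det\rbar$ at the needed places).

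Now set up the two deformation problems in the sense of section~\ref{subsubsec: global deformation rings}: over $F^+$, the deformation ring $R^{\univ,\tau,\psi}_{F^+}$ for lifts of $\rbar$ unramified outside $S$, of determinant $\psi$, and with local lifts at each $v\in S$ on the specified component $R_v$; and over $L^+$, the ring $R^{\univ,\mathrm{BT}}_{L^+}$ for lifts of $\rbar|_{G_{L^+}}$ which at each place above $l$ lie on the unique ordinary Barsotti-Tate component (using Lemma~3.4.3 of \cite{ger}), and which are unramified everywhere else, with determinant obtained by restriction. By construction, restriction to $G_{L^+}$ (together with the definition of the components) gives a natural $\mc{O}$-algebra map $R^{\univ,\mathrm{BT}}_{L^+}\to R^{\univ,\tau,\psi}_{F^+}$; this map is finite by the standard Lemma~2.1.12 of \cite{cht} trace argument (as in the proof of Lemma~\ref{lem:deformation ring is finite over one obtained by tensor} above).

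The key input is then that $\pi$ supplies an ordinary modular Barsotti-Tate lift of $\rbar|_{G_{L^+}}$ on the chosen component, so the modularity lifting theorem of \cite{kis04} (or its refinement in \cite{MR2280776}) applies to identify $(R^{\univ,\mathrm{BT}}_{L^+})^{\red}$ with a Hecke algebra, and in particular shows $R^{\univ,\mathrm{BT}}_{L^+}$ is a finite $\mc{O}$-algebra. Hence $R^{\univ,\tau,\psi}_{F^+}$ is finite over $\mc{O}$. A standard Galois-cohomology calculation (Lemma~3.2.4 of \cite{GG} together with the dimension bound of \cite{gee061}) gives $\dim R^{\univ,\tau,\psi}_{F^+}\ge 1$, so there is a $\Qlbar$-point, producing a lift $r:G_{F^+}\to\GL_2(\Qlbar)$ of $\rbar$ with the required local properties. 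Automorphy of $r|_{G_{L^+}}$ follows from the identification with the Hecke algebra, and then automorphy of $r$ itself follows by solvable descent (Lemma~1.5 of \cite{BLGHT}). The main obstacle is not in this final assembly but in arranging the hypotheses at the base-change step: one must carefully choose $L^+$ so that all the auxiliary conditions (linear disjointness, triviality of types, existence of an ordinary Barsotti-Tate modular lift with matching determinant and local components) are simultaneously satisfied, and so that the ``big image'' hypotheses descend to $L^+$; this is where the propositions proved earlier in this section and the machinery of section~\ref{sec:big image GL2} are indispensable, and is the only place where the two separate hypothesis cases of the theorem enter differently.
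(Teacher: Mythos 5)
Your proposal follows essentially the same route as the paper: obtain a potential ordinary modular lift from Propositions \ref{prop: existence of a potential ordinary lift} and \ref{prop: potential ordinary lift in the dihedral case.}, make a further solvable base change to trivialize the types, arrange everything to be unramified away from $l$, and fix the determinant by a finite-order twist, and then run the finiteness/modularity-lifting argument of \cite{gee061} (which the paper simply cites via Proposition 3.1.5 and Corollary 3.1.7 of \emph{loc.\ cit.} rather than re-deriving). The one slip is that your deformation problem over $L^+$ should impose at each place above $l$ the Barsotti--Tate component determined by restricting $R_v$ (which may be non-ordinary), not always the ordinary one; the ordinary modular lift is needed precisely for those components that are ordinary, while the cuspidal-type trick supplies modular lifts on the non-ordinary components.
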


\begin{proof}We argue as in the proofs of Proposition 3.1.5 and
  Corollary 3.1.7 of \cite{gee061}. Indeed, examining the arguments of
  \emph{loc. cit.}, we see that it is enough to demonstrate that there
  is a finite solvable extension of totally real fields $F^+_2/F^+$
  such that:
  \begin{itemize}
  \item There is an $\iota$-ordinary regular algebraic cuspidal
    automorphic representation $\pi_2$ of $\GL_2(\A_{F^+_2})$ of
    weight 0 which is unramified at every finite place, with
    $\rbar_{l,\iota}(\pi_2)\cong\rbar|_{G_{F^+_2}}$, and $\det
    r_{l,\iota}(\pi_2)=\psi|_{G_{F^+_2}}$.
  \item If $w$ is a place of $F^+_2$ lying over a place $v\in S$, then
    $\tau_v|_{G_{F^+_{2,w}}}$ is trivial.
  \item $\rbar|_{G_{F^+_{2,w}}}$ is trivial for all places $w|l$ of
    $F_2^+$.
  \item $\rbar|_{G_{F^+_2(\zeta_l)}}$ is irreducible, $[F^+_2:\Q]$ is
    even, and $[F^+_2(\zeta_l):F^+_2]=[F^+(\zeta_l):F^+]$.
  \end{itemize}
To see that we can do this, note that by Propositions \ref{prop:
  existence of a potential ordinary lift} and \ref{prop: potential
  ordinary lift in the dihedral case.}, there is a finite solvable
extension of totally real fields $F_1^+/F^+$ and a regular algebraic
$\iota$-ordinary cuspidal automorphic
representation $\pi_1$ of $\GL_2(\A_{F^+_1})$ of weight 0 such that
\begin{itemize}
\item $\pi_1$ is unramified at all finite places.
\item $\rbar_{l,\iota}(\pi_2)\cong\rbar|_{G_{F^+_1}}$.
\item $F^+_1$ is linearly disjoint from
  $\overline{F^+}^{\ker\rbar}(\zeta_l)$ over $F^+$.
\end{itemize}
Choose a solvable extension of totally real fields $F^+_2/F^+_1$ such that
\begin{itemize}
\item $\psi|_{G_{F^+_2}}$ is crystalline at all places dividing
  $l$, and unramified at all other finite places.
 \item If $w$ is a place of $F^+_2$ lying over a place $v\in S$,
    then $\tau_v|_{G_{F^+_{2,w}}}$ is trivial.
    \item $F^+_2$ is linearly disjoint from
      $\overline{F^+}^{\ker\rbar}(\zeta_l)$ over $F^+$.  \item
      $\rbar|_{G_{F^+_{2,w}}}$ is trivial for all places $w|l$ of
      $F_2^+$.
\item  $[F^+_2:\Q]$
      is even.
\end{itemize}Let $\pi_2$ be the base change of $\pi_1$ to
$F^+_2$. Note that $\psi|_{G_{F^+_2}}$ and $\det r_{l,\iota}(\pi_2)$ are
crystalline characters with the same Hodge-Tate weights, and are both
unramified at all places not dividing $l$, so they differ by a finite
order totally even character which is unramified at all finite places.
Thus we may replace $\pi_2$ by a twist by a finite order character to
ensure that $\det r_{l,\iota}(\pi_2)=\psi$,
without affecting any of the other properties of $\pi_2$ noted above. The
result follows (since the hypotheses of the theorem imply that
$\rbar|_{G_{F^+(\zeta_l)}}$ is irreducible).
\end{proof}

We thank Brian Conrad for showing us the argument for case 2 in the
proof of the following lemma.

\begin{lem}
  \label{lem:existence of local pot BT lift}Suppose $l$ is an odd prime. Let $K$ be a finite
  extension of $\Ql$, and $\rbar:G_K\to\GL_2(\Flbar)$ a continuous
  representation, with \[\rbar\cong
  \begin{pmatrix}
    \psibar_1&*\\0&\psibar_2\epsilonbar^{-1}
  \end{pmatrix}\]for some characters $\psibar_1$, $\psibar_2$. Let
  $\chi:G_K\to\Zlbar^\times$ be a finite order character lifting
  $\psibar_1\psibar_2$. Then there is a continuous potentially
  Barsotti-Tate representation
  $r:G_K\to\GL_2(\Zlbar)$ of determinant $\chi\epsilon^{-1}$ lifting $\rbar$ with  \[r\cong
  \begin{pmatrix}
    \psi_1&*\\0&\psi_2\epsilon^{-1}
  \end{pmatrix}\]for some potentially unramified characters $\psi_1$, $\psi_2$
  lifting $\psibar_1$, $\psibar_2$.
\end{lem}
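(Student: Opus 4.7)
The plan is to first lift the two diagonal characters, then encode the extension via Galois cohomology, pass to a suitable finite Galois extension $L/K$ where the potential Barsotti--Tate condition becomes a Kummer-theoretic (crystalline) condition, and finally descend back to $G_K$.

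\emph{Step 1: Diagonal characters.} Since $\psibar_1$ takes values in $\Flbar^\times$, which is a union of finite cyclic groups of prime-to-$l$ order, $\psibar_1$ is automatically of finite order prime to $l$. Let $\psi_1 : G_K \to \Zlbar^\times$ be its Teichm\"uller lift, which is a finite-order character lifting $\psibar_1$. Set $\psi_2 := \chi\psi_1^{-1}$; since $\chi$ and $\psi_1$ are both finite order, so is $\psi_2$, and it reduces to $\bar\chi\psibar_1^{-1} = \psibar_2$. Both $\psi_1$ and $\psi_2$ are thus potentially unramified, and $\psi_1\psi_2 = \chi$, so any upper-triangular $r$ with these characters and $\epsilon^{-1}$ inserted in the lower-right will automatically have determinant $\chi\epsilon^{-1}$.

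\emph{Step 2: Reduction to a Galois-cohomological lifting problem.} Choose a basis realizing $\rbar$ as literally upper-triangular; its upper-right entry defines a class $\bar c \in H^1(G_K, \Flbar(\bar\eta))$, where $\bar\eta := \psibar_1\psibar_2^{-1}\epsilonbar$, classifying the extension up to change of basis. Setting $\eta := \psi_1\psi_2^{-1}\epsilon$, producing an $r$ of the required shape reduces to finding $c \in H^1(G_K, \Zlbar(\eta))$ lifting $\bar c$ mod $l$ such that the corresponding representation is potentially Barsotti--Tate. Choose a finite Galois extension $L/K$ on which $\psi_1$ and $\psi_2$ become trivial; then $r|_{G_L}$ has the shape $\bigl(\begin{smallmatrix}1 & * \\ 0 & \epsilon^{-1}\end{smallmatrix}\bigr)$, and the pot BT condition amounts to $c|_{G_L}$ lying in the Bloch--Kato subgroup $H^1_f(G_L,\Zlbar(\epsilon))$. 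By Kummer theory, $H^1(G_L,\Zlbar(\epsilon))$ identifies with $\widehat{L^\times}\otimes_{\Z_l}\Zlbar$, and $H^1_f$ with the unit subgroup $\widehat{\mathcal{O}_L^\times}\otimes_{\Z_l}\Zlbar$.

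\emph{Step 3: Lift the class and descend.} Modulo $l$, the image of $H^1_f(G_L,\Zlbar(\epsilon))$ in $H^1(G_L,\Flbar(\epsilonbar)) = L^\times/(L^\times)^l \otimes \Flbar$ is precisely the unit part; a class lifts to a crystalline class iff its valuation component vanishes. I arrange this by enlarging $L$ by a further totally ramified extension of degree $l$: any old uniformizer then becomes an $l$-th power times a unit in the new $L$, killing the valuation component of $\bar c|_{G_L}$. This yields some $c_L \in H^1_f(G_L, \Zlbar(\epsilon))$ lifting $\bar c|_{G_L}$. For descent, since $\epsilon|_{G_L}$ is nontrivial we have $\eta^{G_L} = 0$, so inflation--restriction gives an isomorphism $H^1(G_K,\Qlbar(\eta)) \isoto H^1(G_L,\Qlbar(\epsilon))^{\Gal(L/K)}$. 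The class $\bar c|_{G_L}$ is automatically $\Gal(L/K)$-invariant, and combining averaging over $\Gal(L/K)$ in $\Qlbar$-coefficients with an appropriate rescaling yields a $\Gal(L/K)$-invariant integral crystalline lift of $\bar c|_{G_L}$, which descends to the desired $c \in H^1(G_K,\Zlbar(\eta))$ and hence produces $r$.

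The main obstacle is the final descent: arranging $c_L$ to be simultaneously $\Gal(L/K)$-invariant, integral, and with correct reduction modulo $l$, since averaging and rescaling interact awkwardly with reduction mod $l$ when $l$ divides $|\Gal(L/K)|$. A cleaner alternative, plausibly the approach contributed by Brian Conrad, is to work geometrically: realize $r$ as the generic fibre of the Tate module of an extension of an \'etale $l$-divisible group by a multiplicative $l$-divisible group over $\mathcal{O}_L$, using the deformation theory of finite flat group schemes to match the prescribed mod-$l$ reduction, and then descend to $K$.
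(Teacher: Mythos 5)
Your Steps 1--2 are a reasonable setup, but the argument breaks at exactly the point you flag, and that point is not a technicality --- it is the whole content of the lemma. Two distinct problems. First, solving the lifting problem over $G_L$ and descending does not control the reduction of $c$ over $G_K$: you need $c \equiv \bar c$ in $H^1(G_K,\Flbar(\bar\eta))$, whereas your construction only guarantees $c|_{G_L} \equiv \bar c|_{G_L}$, and these differ by the kernel of restriction. Second, and fatally, your fix for the valuation component forces $l \mid [L:K]$ (you adjoin a totally ramified degree-$l$ extension), so the averaging operator $\tfrac{1}{[L:K]}\sum_\sigma \sigma$ is not defined integrally; any rescaling to restore integrality changes the reduction mod $l$. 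You cannot simultaneously have invariance, integrality, and the correct reduction by this route, and the appeal to ``deformation theory of finite flat group schemes'' is not an argument. So the proposal does not prove the lemma.

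The paper's proof never leaves $K$ and never restricts to an auxiliary extension. It first observes that an upper-triangular lift with $\psi_1,\psi_2$ finitely ramified and $\psi_1\neq\psi_2$ is \emph{automatically} potentially crystalline (no monodromy can occur when the two characters differ), which eliminates your $H^1_f$/Kummer analysis entirely. The only question is then whether the extension class lifts, i.e.\ whether $H^1(G_K,\mc{O}(\psi_1\psi_2^{-1}\epsilon)) \to H^1(G_K,\F(\psibar_1\psibar_2^{-1}\epsilonbar))$ is surjective. When $\psibar_1\neq\psibar_2$ the obstruction group $H^2(G_K,\mc{O}(\psi_1\psi_2^{-1}\epsilon))$ vanishes by Tate duality. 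When $\psibar_1=\psibar_2$ the obstruction to lifting the specific line $L$ determined by $\rbar$ is identified, again via Tate duality, with the condition that a specific line in $H^1(G_K,\F)$ (spanned by the leading term $\overline{\alpha}$ of $\psi_1\psi_2^{-1}-1$) lie in the hyperplane $H$ annihilating $L$; this is arranged by twisting $\psi_1,\psi_2$ by unramified characters, with the tr\`es ramifi\'e and peu ramifi\'e cases handled separately. Note that your Step 1 rigidly fixes $\psi_1$ as the Teichm\"uller lift, which discards precisely the freedom (unramified twists of $\psi_1,\psi_2$) that the paper's argument needs in the hard case.
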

\begin{proof}
  We first remark that any lift of the form  \[r\cong
  \begin{pmatrix}
    \psi_1&*\\0&\psi_2\epsilon^{-1}
  \end{pmatrix}\] is automatically potentially crystalline if $\psi_1$
  and $\psi_2$ are finitely ramified
  $\psi_1\neq\psi_2$.

  \medskip{\sl Case 1: $\psibar_1 \psibar_2^{-1} \neq 1$:} Choose an
  arbitrary finite-order lift $\psi_1$ of $\psibar_1$, and set
  $\psi_2=\chi\psi_1^{-1}$. Let $E \subset \Qlbar$ be a finite
  extension of $\bb{Q}_l$ with ring of integers $\mc{O}$ and residue
  field $\bb{F}$ such that $\psi_1$ and $\psi_2$ are valued in $\mc{O}^\times$.
 By the remark above, it suffices to
  show that the natural map
  $H^{1}(G_K,\mc{O}(\psi_1 \psi_2^{-1}\epsilon)) \to
  H^1(G_K,\bb{F}(\psibar_1\psibar_2^{-1}\epsilonbar))$ is
  surjective. However, the cokernel of this map is a submodule of
  $H^2(G_K,\mc{O}(\psi_1\psi_2^{-1}\epsilon))$ which, by Tate-duality,
  is Pontryagin dual to
  $H^0(G_K,(E/\mc{O})(\psi_2\psi_1^{-1}))=\{0\}$.

  \medskip{\sl Case 2: $\psibar_1 \psibar_2^{-1} =1$:} Choose finitely ramified lifts $\psi_1$ and
  $\psi_2$ of $\psibar_1$ and $\psibar_2$ such that
  $\chi=\psi_1\psi_2$. Let $\psi=\psi_1\psi_2^{-1}$. Choose $E \subset
  \Qlbar$, a finite extension of $\bb{Q}_l$ with ring of integers
  $\mc{O}$ and residue field $\bb{F}$ such that $\psi_1$ and $\psi_2$
  are valued in $\mc{O}^\times$. Let $L$ be the line in
  $H^1(G_K,\bb{F}(\epsilonbar))$ determined $\rbar$ (an extension of
  $\psibar_2\epsilonbar^{-1}$ by $\psibar_1$) and let $H$ be the
  hyperplane in $H^1(G_K,\bb{F})$ which annihilates $L$ under the Tate
  pairing. Let $\varpi \in \mc{O}$ be a uniformizer. Let $\delta_1 : H^1(G_K,\bb F(\overline{\epsilon})) \to
  H^2(G_K,\mc{O}(\psi\epsilon))$ be the map coming from
  the exact sequence $0\to \mc{O}(\psi\epsilon)\stackrel{\varpi}{\to}\mc
  O(\psi\epsilon)\to \bb F(\overline{\epsilon})\to 0$ of
  $G_K$-modules. We need to show that $\delta_1(L)=0$.

  Let $\delta_0$ be the map
  $H^0(G_K,(E/\mc{O})(\psi^{-1})) \to H^{1}(G_K,\bb{F})$ coming from
  the exact sequence $0 \to \bb{F} \to (E/\mc{O})(\psi^{-1})
  \stackrel{\varpi}{\to} (E/\mc{O})(\psi^{-1}) \to 0$ of
  $G_K$-modules.  By Tate-duality, the condition that $L$ vanishes
  under the map $\delta_1$ is equivalent to the condition that the
  image of the map $\delta_0$ is contained in $H$.  Let $n \geq 1$ be
  the largest integer with the property that $\psi^{-1} \equiv 1 \mod
  \varpi^n$. Then we can write $\psi^{-1}(x)= 1+\varpi^n \alpha(x)$
  for some function $\alpha : G_K \to \mc{O}$. Let $\overline{\alpha}$
  denote $\alpha \mod \varpi : G_K \to \bb{F}$. Then
  $\overline{\alpha}$ is additive and the choice of $n$ ensures that
  it is non-trivial. It is straightforward to check that the image of
  the map $\delta_0$ is the line spanned by $\overline{\alpha}$. If
  $\overline{\alpha}$ is in $H$, we are done. Suppose this is not the
  case. We break the rest of the proof into two cases.

  \medskip{\sl Case 2a: $\psibar_1 \psibar_2^{-1} = 1$, and $\rbar$ is
    tr\'es ramifi\'e:} We can and do suppose that we have chosen
  $\psi_1$ and $\psi_2$ so that $\psi$ is ramified and further, that
  $\overline{\alpha}$ is ramified.  The fact that $\rbar$ is tr\'es
  ramifi\'e implies that $H$ does not contain the unramified line in
  $H^1(G_K,\bb{F})$. Thus there is a unique $\overline{x} \in
  \bb{F}^\times$ such that $\overline{\alpha}+u_{\overline{x}} \in H$
  where $u_{\overline{x}}: G_K\to \bb{F}$ is the unramified
  homomorphism sending $\Frob_K$ to $\overline{x}$. Let $y$ be an
  element of $\mc{O}^\times$ with
  $2\overline{y}=\overline{x}$. Replacing $\psi_1$ with $\psi_1$ times
  the unramified character sending $\Frob_K$ to $(1+\varpi^n y)^{-1}$
  and $\psi_2$ with $\psi_2$ times the unramified character sending
  $\Frob_K$ to $1+\varpi^ny$, we are done.

 \medskip{\sl Case 2b: $\psibar_1 \psibar_2^{-1} = 1$, and $\rbar$
    is peu ramifi\'e:} Making a ramified extension of $\mc{O}$ if
  necessary, we can and do assume that $n\geq 2$. The fact  that
  $\rbar$ is peu ramifi\'e implies that $H$ contains the unramified
  line. It follows that if we replace
  $\psi_1$ with $\psi_1$ times
  the unramified character sending $\Frob_K$ to $1+\varpi$ and
  $\psi_2$ with $\psi_2$ times the unramified character sending
  $\Frob_K$ to $(1+\varpi)^{-1}$, then we are done (as the new
  $\overline{\alpha}$ will be unramified). 
\end{proof}

Using the previous two results, we can now give our main result on the
existence of ordinary modular lifts.

\begin{thm}
  \label{cor: main result on existence of ordinary lifts.}
  Let $l\ge 3$ be prime, and let
  $F^+$ be a totally real field. Let
  $\rbar:G_{F^+}\to\GL_2(\Flbar)$ be irreducible and modular. Fix a character
  $\psi:G_{F^+}\to\Qlbar^\times$ such that $\epsilon\psi$ has finite
  order, and $\bar{\psi}=\det\rbar$. 
Assume further that
\begin{itemize}
\item For every place $v|l$ of $F^+$, $\rbar|_{G_{F^+_v}}$ is
  reducible.
\item Either
  \begin{itemize}
  \item
    \begin{itemize}
    \item $l\ge 5$.
    \item $\rbar(G_{F^+(\zeta_l)})$ is $2$-big.
    \item
      $[\overline{F^+}^{\ker\ad\rbar}(\zeta_l):\overline{F^+}^{\ker\ad\rbar}]>2$.
    \end{itemize}Or:
  \item
    \begin{itemize}
    \item There is a quadratic CM extension $M/F^+$, with $M$ not
      equal to the quadratic extension of $F^+$ in $F^+(\zeta_l)$, and
      a continuous character $\bar{\theta}:G_M\to\Flbar^\times$ such
      that $\rbar=\Ind_{G_M}^{G_{F^+}}\bar{\theta}$.
    \end{itemize}
  \end{itemize}
\end{itemize}
Then there is a continuous representation
$r:G_{F^+}\to\GL_2(\Zlbar)$ lifting $\rbar$ such that
\begin{itemize}
\item $r$ is modular.
\item For each place $v|l$ of $F^+$, $r|_{G_{F^+_v}}$ is
  potentially Barsotti-Tate and ordinary.
\item $\det r = \psi$.
\end{itemize}
\end{thm}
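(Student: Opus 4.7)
The plan is to reduce the corollary to Theorem~\ref{thm: main result on existence of ordinary lifts with types everywhere.} by choosing, at each place $v\mid l$, an inertial type $\tau_v$ that forces ordinarity. The input for this choice will be provided by Lemma~\ref{lem:existence of local pot BT lift}.

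First, I would use the hypothesis that $\rbar|_{G_{F^+_v}}$ is reducible at each $v\mid l$, so it can be conjugated into the form $\begin{pmatrix}\bar\alpha_v & * \\ 0 & \bar\beta_v\end{pmatrix}$; writing $\bar\psi_{1,v}=\bar\alpha_v$ and $\bar\psi_{2,v}=\bar\beta_v\epsilonbar$, this matches the shape required by Lemma~\ref{lem:existence of local pot BT lift}. Since $\psi\epsilon$ is of finite order by assumption, the character $\chi_v:=\psi|_{G_{F^+_v}}\epsilon|_{G_{F^+_v}}$ is of finite order and lifts $\bar\psi_{1,v}\bar\psi_{2,v}=\det(\rbar|_{G_{F^+_v}})\epsilonbar$. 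Lemma~\ref{lem:existence of local pot BT lift} therefore yields a potentially Barsotti--Tate lift $r_v:G_{F^+_v}\to\GL_2(\Zlbar)$ of $\rbar|_{G_{F^+_v}}$ of determinant $\chi_v\epsilon^{-1}=\psi|_{G_{F^+_v}}$ and of ordinary form $\begin{pmatrix}\psi_{1,v} & * \\ 0 & \psi_{2,v}\epsilon^{-1}\end{pmatrix}$ with $\psi_{1,v},\psi_{2,v}$ potentially unramified. Let $\tau_v$ be the inertial type of $r_v$.

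Next, I would let $S$ be the union of the set of places above $l$ with the (finite) set of finite places where $\rbar$ or $\psi$ is ramified. At each $v\in S$ with $v\nmid l$, I choose an arbitrary lift of $\rbar|_{G_{F^+_v}}$ with determinant $\psi|_{G_{F^+_v}}$ (for instance, by lifting the semisimplification of $\rbar|_{G_{F^+_v}}$ via Teichm\"uller and then adjusting the determinant, since deformation theory is unobstructed at places not dividing $l$) and take $\tau_v$ to be its inertial type. At each $v\mid l$ I let $R_v$ be the irreducible component of $R^{\square,\psi,\tau_v}[1/l]$ containing the point corresponding to $r_v$; because $r_v$ is ordinary, this component is an ordinary component in the sense of \cite{kis04}, \cite{MR2280776}, so every $\Qlbar$-point of $R_v$ gives rise to an ordinary representation. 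At the remaining $v\in S$, let $R_v$ be any component containing the chosen local lift.

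Finally, I would apply Theorem~\ref{thm: main result on existence of ordinary lifts with types everywhere.} with this data: the hypotheses on $\rbar$ in that theorem are exactly those assumed in the corollary, and the local lifts just constructed supply the required ``has a lift of type $\tau_v$ and determinant $\psi|_{G_{F^+_v}}$'' assumption at each $v\in S$. The theorem then produces a modular continuous representation $r:G_{F^+}\to\GL_2(\Zlbar)$ lifting $\rbar$, with $\det r=\psi$, unramified outside $S$, and corresponding at each $v\mid l$ to a closed point of the ordinary component $R_v$; hence $r|_{G_{F^+_v}}$ is potentially Barsotti--Tate and ordinary at every $v\mid l$, as required. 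There is no real obstacle here beyond bookkeeping: the only subtlety worth flagging is the conventional shift turning $\bar\beta_v$ into $\bar\psi_{2,v}\epsilonbar^{-1}$ needed to apply Lemma~\ref{lem:existence of local pot BT lift}, and the verification that its output actually lies on an ordinary component of the pot BT lifting ring.
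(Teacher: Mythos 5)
Your proposal follows the paper's proof essentially step for step: use Lemma \ref{lem:existence of local pot BT lift} to produce ordinary potentially Barsotti--Tate local lifts of determinant $\psi|_{G_{F^+_v}}$ at each $v\mid l$, take their inertial types together with the (unique, all-ordinary) components of the corresponding potentially Barsotti--Tate lifting rings, pick types and components at the remaining bad places, and apply Theorem \ref{thm: main result on existence of ordinary lifts with types everywhere.}. The determinant bookkeeping ($\chi_v=\psi\epsilon|_{G_{F^+_v}}$ is finite order and lifts $\overline{\psi}\overline{\epsilon}|_{G_{F^+_v}}$, so the output has determinant $\psi|_{G_{F^+_v}}$) is exactly right.

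The one point where your justification is not correct is the existence of local lifts of determinant $\psi|_{G_{F^+_v}}$ at the places $v\in S$ with $v\nmid l$. Local deformation theory at such places is \emph{not} unobstructed in general: $H^2(G_{F^+_v},\ad\rbar)$ is dual to $\Hom_{G_{F^+_v}}(\rbar,\rbar(1))$, which can be nonzero. Moreover, a Teichm\"uller lift of the semisimplification of $\rbar|_{G_{F^+_v}}$ is a lift of the semisimplification, not of $\rbar|_{G_{F^+_v}}$ itself when the latter is indecomposable, so it does not give a point of $R^{\square}_{\rbar|_{G_{F^+_v}}}$ as required. The paper handles this step by citing Lemma 3.1.4 of \cite{kis04}, which shows that $\rbar$ admits a \emph{global} modular lift of determinant $\psi$; restricting that lift to $G_{F^+_v}$ furnishes the needed local lifts at every $v\nmid l$. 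With that substitution your argument is complete and coincides with the paper's.
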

\begin{proof}
For each place $v|l$ of $F^+$, using Lemma \ref{lem:existence of local
  pot BT lift}, choose a potentially Barsotti-Tate lift $r^{(v)}:
G_{F^+_v} \to \GL_2(\Zlbar)$ of $\rbar|_{G_{F^+_v}}$ with
 \[r^{(v)} \cong \begin{pmatrix}
    \psi_1^{(v)}&*\\0&\psi^{(v)}_2\epsilon^{-1}
  \end{pmatrix}\] for some finitely ramified characters
  $\psi^{(v)}_1$, $\psi^{(v)}_2$ with $\psi^{(v)}_1\psi^{(v)}_2 =
  \epsilon\psi|_{G_{F_v}}$.  Let $S$ denote the set of primes of $F^+$
  not dividing $l$ at which $\rbar$ or $\psi$ is ramified.  For each
  $v \in S$, choose a lift $r^{(v)}$ of $\rbar|_{G_{F^+_v}}$ of
  determinant $\psi|_{G_{F^+_v}}$ (that this is possible follows
  easily from Lemma 3.1.4 of \cite{kis04}, which shows that $\rbar$
  has a (global) modular lift of determinant $\psi$). For each $v\in S
  \cup \{v|l\}$, let $\tau_v$ be the inertial type of $r^{(v)}$ and
  let $R_v$ denote an irreducible component of the lifting ring
  $R^{\square,\psi,\tau_v}_{\rbar|_{G_{F^+_v}}}[1/l]$ containing
  $r^{(v)}$. (The closed points of this ring correspond to lifts
  which have type $\tau_v$, determinant $\psi$ and are potentially
  Barsotti-Tate if $v|l$.) We note that if $v|l$, then $R_v$ is unique
  and moreover any other lift of $\rbar|_{G_{F^+_v}}$ corresponding
  to a closed point of $R_v[1/l]$ is ordinary. The result now follows
  from Theorem \ref{thm: main result on existence of ordinary lifts
    with types everywhere.}.
\end{proof}

Combining this result with the improvements made in \cite{MR2280776}
to the modularity lifting theorem of \cite{kis04}, we obtain the
following theorem, where in contrast to previous results in the area
we do not need to assume that $\rbar$ has a modular lift which is
ordinary at a specified set of places.

\begin{thm}\label{thm: modularity lifting theorem for BT without ord assumption}
  Let $l\ge 3$ be a prime, $F^+$ a totally real field, and
  $r:G_{F^+}\to\GL_2(\Qlbar)$ a continuous representation
  unramified outside of a finite set of primes, with determinant a
  finite order character times the inverse of the cyclotomic character. Suppose
  further that
  \begin{enumerate}
  \item $r$ is potentially Barsotti-Tate for each $v|l$.
  \item $\rbar$ is modular and irreducible.
  \item Either
    \begin{itemize}
    \item
      \begin{itemize}
      \item $l\ge 5$.
      \item $\rbar(G_{F^+(\zeta_l)})$ is $2$-big.
      \item
        $[\overline{F^+}^{\ker\ad\rbar}(\zeta_l):\overline{F^+}^{\ker\ad\rbar}]>2$.
      \end{itemize}Or:
    \item
      \begin{itemize}
      \item There is a quadratic CM extension $M/F^+$, with $M$ not
        equal to the quadratic extension of $F^+$ in $F^+(\zeta_l)$,
        and a continuous character $\bar{\theta}:G_M\to\Flbar^\times$
        such that $\rbar=\Ind_{G_M}^{G_{F^+}}\bar{\theta}$.
      \end{itemize}
    \end{itemize}
  \end{enumerate}Then $r$ is modular.
\end{thm}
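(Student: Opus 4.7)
The plan is to reduce the statement to the potentially Barsotti--Tate modularity lifting theorems of \cite{kis04} and \cite{MR2280776}. The most stringent hypothesis of those theorems --- that $\rbar$ admits a modular lift on each relevant component of the local potentially Barsotti--Tate deformation rings at places above $l$ --- was previously accessible only on the non-ordinary component (via the cuspidal types trick of Corollary 3.1.6 of \cite{kis04}). The earlier results of this paper now make the ordinary case accessible as well, and the combination suffices.

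Concretely, I would set $\psi = \det r$, let $S$ be the finite set of finite places of $F^+$ consisting of those above $l$ together with every finite place where $r$ is ramified, and for each $v \in S$ let $\tau_v$ denote the inertial type of $r|_{G_{F^+_v}}$. Let $R_v$ denote the irreducible component of the local lifting ring $R^{\square,\psi,\tau_v}_{\rbar|_{G_{F^+_v}}}[1/l]$ on which the point corresponding to $r|_{G_{F^+_v}}$ lies; such a component exists precisely because $r|_{G_{F^+_v}}$ itself provides a (potentially Barsotti--Tate, for $v \mid l$) lift of type $\tau_v$ and determinant $\psi|_{G_{F^+_v}}$. With these choices in place the hypotheses of Theorem \ref{thm: main result on existence of ordinary lifts with types everywhere.} are satisfied --- the big-image / induced-from-CM alternative is the same one assumed here --- so applying that theorem produces a modular lift $r' : G_{F^+} \to \GL_2(\Zlbar)$ of $\rbar$ of determinant $\psi$, unramified outside $S$, and corresponding to a closed point of $R_v$ for every $v \in S$.

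At this point $r$ and $r'$ share the reduction $\rbar$, are both potentially Barsotti--Tate of the same inertial type at each $v \mid l$, and lie on the same irreducible component of the local lifting ring there; the local behaviour at places $v \in S$ with $v \nmid l$ also matches. The modularity lifting theorem for potentially Barsotti--Tate representations of \cite{MR2280776}, extending \cite{kis04}, now applies directly and yields that $r$ is modular.

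The only real difficulty lies in constructing $r'$ on the ordinary component at those places $v \mid l$ where $r|_{G_{F^+_v}}$ is ordinary, since the classical non-ordinary trick produces lifts only on the non-ordinary component; this is precisely what Theorem \ref{thm: main result on existence of ordinary lifts with types everywhere.} resolves. Once that result is in hand, the deduction of the present modularity lifting theorem is essentially formal.
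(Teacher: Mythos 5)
Your proposal is correct and follows essentially the same route as the paper: choose $S$, $\psi=\det r$, the types $\tau_v$ and components $R_v$ determined by $r$ itself, invoke Theorem \ref{thm: main result on existence of ordinary lifts with types everywhere.} to produce a modular lift $r'$ on the same local components (in particular matching the ordinary/non-ordinary dichotomy at $v\mid l$), and then conclude by the potentially Barsotti--Tate modularity lifting theorem (the paper cites Theorem 1.1 of \cite{MR2480560} for this last step, but the content is the one you describe).
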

\begin{proof}This follows from Theorem \ref{thm: main result on
    existence of ordinary lifts with types everywhere.}, together with
  Theorem 1.1 of \cite{MR2480560}. More precisely, let $S$ be the set
  of places of $F^+$ for which $r|_{G_{F^+_v}}$ is ramified (so
  that $S$ automatically contains all the places dividing $l$). Let
  $\psi=\det r$. After conjugating, we may assume that $r$ takes
  values in $\GL_2(\Zlbar)$. For each $v\in S$, let $\tau_v$ be the
  type of $r|_{G_{F^+_v}}$, and let $R_v$ be a component of the
  lifting ring $R^{\square,\psi,\tau_v}[1/l]$ corresponding to
  $r|_{G_{F^+_v}}$. (The closed points of this ring correspond to
  lifts which have type $\tau_v$, determinant $\psi$ and are
  potentially Barsotti-Tate if $v|l$.) Applying Theorem \ref{thm: main
    result on existence of ordinary lifts with types everywhere.}, we
  deduce that there is a modular lift $r'$ of $\rbar$ which is
  potentially Barsotti-Tate for all places $v|l$, with
  $r'|_{G_{F^+_v}}$ potentially ordinary for precisely the places
  at which $r|_{G_{F^+_v}}$ is potentially ordinary.  The
  modularity of $r$ then follows immediately from Theorem 1.1 of
  \cite{MR2480560}.
\end{proof}

We can use the results of section \ref{sec:big image GL2} to deduce
corollaries of these theorems in which the conditions on the image of
$\rbar$ are more explicit. For example, we have the following
result.

\begin{thm}
  \label{thm: main result on existence of ordinary lifts with types
    everywhere - clean hypotheses.} Let $l\ge 7$ be prime, and let
  $F^+$ be a totally real field. Let $\rbar:G_{F^+}\to\GL_2(\Flbar)$
  be irreducible and modular. Fix a character
  $\psi:G_{F^+}\to\Qlbar^\times$ such that $\epsilon\psi$ has finite
  order, and $\bar{\psi}=\det\rbar$. Let $S$ denote a finite set of
  finite places of $F^+$ containing all places at which $\rbar$ or
  $\psi$ is ramified, and all places dividing $l$. 

  For each place $v\in S$, fix an inertial type $\tau_v$ of
  $I_{F^+_v}$ on a $\Qlbar$-vector space, of determinant
  $(\psi\epsilon)|_{I_{F^+_v}}$. Assume that for each place $v\in
  S$, $v\nmid l$, $\rbar|_{G_{F^+_v}}$ has a lift of type $\tau_v$
  and determinant $\psi|_{G_{F^+_v}}$, and for each place $v\in S$,
  $v| l$, $\rbar|_{G_{F^+_v}}$ has a potentially Barsotti-Tate lift
  of type $\tau_v$ and determinant $\psi|_{G_{F^+_v}}$. For each place
  $v\in S$, we let $R_v$ denote an irreducible component of the
  corresponding lifting ring $R^{\square,\psi,\tau_v}[1/l]$ for
  (potentially Barsotti-Tate) lifts of type $\tau_v$ and determinant
  $\psi|_{G_{F^+_v}}$.

Assume further that
  \begin{itemize}

  \item $[F^+(\zeta_l):F^+]>4$.
  \item $\rbar|_{G_{F^+(\zeta_l)}}$ is irreducible.

  \end{itemize}
Then there is a continuous representation
$r:G_{F^+}\to\GL_2(\Zlbar)$ lifting $\rbar$ of determinant $\psi$ such that
\begin{itemize}
\item $r$ is modular.
\item $r$ is unramified at all places $v\notin S$.
\item For each place $v|l$ of $F^+$, $r|_{G_{F^+_v}}$ is
  potentially Barsotti-Tate of type $\tau_v$ (and indeed corresponds
  to a point of $R_v$).
\item For each place $v\in S$, $v\nmid l$, $r|_{G_{F^+_v}}$ has type $\tau_v$ (and indeed corresponds
  to a point of $R_v$).
\end{itemize}
\end{thm}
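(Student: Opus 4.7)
The plan is to reduce this result to Theorem \ref{thm: main result on existence of ordinary lifts with types everywhere.} by verifying, under the stated hypotheses, that one of its two alternative image conditions holds. Since $l \geq 7$ and $\rbar$ is absolutely irreducible, we apply Lemma \ref{lem:a bigness alternative II} to $\rbar \colon G_{F^+} \to \GL_2(\Flbar)$, yielding either (a) $\rbar(G_{F^+(\zeta_l)})$ is $2$-big, or (b) there are field extensions $K_2/K_1/F^+$ with $K_1/F^+$ trivial or cubic Galois, $K_2/K_1$ quadratic, and a continuous character $\bar\theta \colon G_{K_2} \to \Flbar^\times$ with $\rbar|_{G_{K_1}} \cong \Ind_{G_{K_2}}^{G_{K_1}} \bar\theta$.

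In case (a), the $2$-bigness hypothesis is given, and we verify $[\overline{F^+}^{\ker \ad \rbar}(\zeta_l) : \overline{F^+}^{\ker \ad \rbar}] > 2$ as follows. Writing $L = \overline{F^+}^{\ker \ad \rbar}$, this degree equals $[F^+(\zeta_l) : F^+(\zeta_l) \cap L]$, and so $[F^+(\zeta_l) \cap L : F^+]$ is the order of a common cyclic quotient of the image of $\ad \rbar$ and of $\Gal(F^+(\zeta_l)/F^+)$. By Dickson's classification (using that $\rbar|_{G_{F^+(\zeta_l)}}$ is irreducible), the image of $\ad \rbar$ has cyclic quotients of order at most $2$ except when it is $A_4$, in which case the maximal cyclic quotient is $\Z/3\Z$. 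But an $A_4$ projective image whose $\Z/3\Z$ quotient factors through the cyclotomic character forces the image of $G_{F^+(\zeta_l)}$ to lie in the kernel $K_4 \subset A_4$, placing us in the cubic sub-case of (b) rather than in case (a). Hence $[F^+(\zeta_l) \cap L : F^+] \leq 2$, and the hypothesis $[F^+(\zeta_l):F^+] > 4$ gives the required strict inequality.

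In case (b) with $K_1 = F^+$, we have $\rbar \cong \Ind_{G_{K_2}}^{G_{F^+}} \bar\theta$. The oddness of $\rbar$ forces $K_2$ to be CM (possibly after re-selecting $K_2$ among the three inducing quadratic subfields when the projective image is $K_4$). Moreover, $K_2$ cannot be the quadratic subfield of $F^+(\zeta_l)$: otherwise $\rbar|_{G_{F^+(\zeta_l)}}$ would sit inside the abelian $\rbar|_{G_{K_2}} = \bar\theta \oplus \bar\theta^c$ and hence be reducible, contradicting our hypothesis. This places us in the dihedral alternative of Theorem \ref{thm: main result on existence of ordinary lifts with types everywhere.}.

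The hard case will be case (b) with $K_1/F^+$ cubic: here $K_1 \subset F^+(\zeta_l)$ and, intersecting with $F^+(\zeta_l)^+$, $K_1$ is totally real, yet $\rbar$ itself is not dihedral over $F^+$. I would handle this by applying Proposition \ref{prop: potential ordinary lift in the dihedral case.} to the dihedral representation $\rbar|_{G_{K_1}}$ (induced from the CM quadratic $K_2/K_1$), obtaining a finite solvable totally real extension $L^+/K_1$---hence solvable over $F^+$---together with an $\iota$-ordinary regular algebraic cuspidal automorphic representation of weight $0$ lifting $\rbar|_{G_{L^+}}$. This is precisely the data that the proof of Theorem \ref{thm: main result on existence of ordinary lifts with types everywhere.} extracts via Propositions \ref{prop: existence of a potential ordinary lift} and \ref{prop: potential ordinary lift in the dihedral case.}, so the remainder of that argument---which uses the descent machinery of \cite{gee061} to pass from a suitable solvable totally real extension back to $F^+$---yields the required lift of $\rbar$ over $F^+$, with the prescribed types and local behavior at places in $S$.
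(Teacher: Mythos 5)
Your proposal is correct and follows essentially the same route as the paper: both reduce to Theorem \ref{thm: main result on existence of ordinary lifts with types everywhere.} via the dichotomy of Lemma \ref{lem:a bigness alternative II}, checking the degree condition on $\overline{F^+}^{\ker\ad\rbar}(\zeta_l)$ in the $2$-big branch (your observation that the troublesome $\Z/3$ quotient of an $A_4$ projective image cannot factor through the cyclotomic character without destroying $2$-bigness is a clean way to handle the subcase the paper instead routes through the dihedral branch), and feeding $\rbar|_{G_{K_1}}$ into Proposition \ref{prop: potential ordinary lift in the dihedral case.} and the descent machinery otherwise. One small slip: your justification that $K_1$ is totally real via a containment $K_1\subset F^+(\zeta_l)$ is not right (no such containment is guaranteed); the correct reason, as in the paper, is that $K_1/F^+$ is Galois of odd degree over a totally real field, so each complex conjugation, having order dividing both $2$ and $[K_1:F^+]$, acts trivially on $K_1$.
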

\begin{proof}
  By Lemma 4.4.2 and its proof, we see that if the projective image of
  $\rbar$ is not dihedral or $A_4$, then $\rbar(G_{F^+(\zeta_l)})$
  is 2-big. Furthermore, in these cases the image of $\ad\rbar$ is
  either of the form $\PGL_2(k)$, $\PSL_2(k)$, $S_4$ or $A_5$, so that
  $\ad\rbar(G_{F^+})$ has no cyclic quotients of order greater than
  2. By the assumption that $[F^+(\zeta_l):F^+]>4$, we deduce that
  $[\overline{F^+}^{\ker\ad\rbar}(\zeta_l):\overline{F^+}^{\ker\ad\rbar}]>2$,
  so in this case the result follows directly from Theorem \ref{thm:
    main result on existence of ordinary lifts with types
    everywhere.}.

  In the remaining cases, we see from Lemma 4.4.2 that there are
  extensions $K_2/K_1/F^+$, with $K_2/K_1$ quadratic and $K_1/F^+$
  cyclic of degree $1$ or $3$, together with a continuous character
  $\thetabar:K_2\to\Flbar^\times$ such that
  $\rbar|_{G_{K_1}}\cong\Ind_{G_{K_2}}^{G_{K_1}}\thetabar$. Then $K_1$
  is totally real (as it is an extension of $F^+$ of odd degree), and
  $K_2$ is CM (because $\rbar$, being modular, is totally odd).
  Furthermore, $\rbar|_{G_{K_1^+(\zeta_l)}}$ is irreducible because
  $\rbar|_{G_{F^+(\zeta_l)}}$ is irreducible, by assumption, and
  $K_1/F^+$ is cyclic of degree 1 or 3.  Thus we can apply Proposition
  \ref{prop: potential ordinary lift in the dihedral case.}  to
  $\rbar|_{G_{K_1}}$, and the result follows as in the proof of
  Theorem \ref{thm: main result on existence of ordinary lifts with
    types everywhere.}.
\end{proof}

From this, we immediately deduce the following versions of
Theorems \ref{cor: main result on existence of ordinary lifts.} and
\ref{thm: modularity lifting theorem for BT without ord assumption}.

\begin{thm}
  \label{thm: main existence of ord lifts theorem with ``clean'' hypotheses}Let $l\ge 7$ be prime, and let
  $F^+$ be a totally real field. Let
  $\rbar : G_{F^+}\to\GL_2(\Flbar)$ be a modular representation such
  that $\rbar|_{G_{F^+(\zeta_l)}}$ is
  irreducible. Assume that $[F^+(\zeta_l):F^+]>4$. Assume further that
  \begin{itemize}
  \item For every place $v|l$ of $F^+$, $\rbar|_{G_{F^+_v}}$ is
    reducible.
  \end{itemize}
  Then there is a continuous representation
  $r:G_{F^+}\to\GL_2(\Zlbar)$ lifting $\rbar$ such that
\begin{itemize}
\item $r$ is modular.
\item For each place $v|l$ of $F^+$, $r|_{G_{F^+_v}}$ is
  potentially Barsotti-Tate and ordinary.
\end{itemize}
\end{thm}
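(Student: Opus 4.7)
The plan is to deduce this theorem from the ``clean hypotheses'' types version, Theorem \ref{thm: main result on existence of ordinary lifts with types everywhere - clean hypotheses.}, by exactly the same recipe used to deduce Theorem \ref{cor: main result on existence of ordinary lifts.} from Theorem \ref{thm: main result on existence of ordinary lifts with types everywhere.}. Observe that the hypotheses $[F^+(\zeta_l):F^+]>4$ and $\rbar|_{G_{F^+(\zeta_l)}}$ irreducible (with $l\ge 7$) are exactly those of Theorem \ref{thm: main result on existence of ordinary lifts with types everywhere - clean hypotheses.}, so once we have appropriate local data $(\tau_v,R_v)$ at all ``bad'' places, the result follows at once.

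First I would choose a character $\psi:G_{F^+}\to\Qlbar^\times$ of the required shape (given in the statement). Next, for every place $v\mid l$, I would apply Lemma \ref{lem:existence of local pot BT lift} to the pair $(\rbar|_{G_{F^+_v}},\psi|_{G_{F^+_v}})$; using that $\rbar|_{G_{F^+_v}}$ is reducible, this produces a potentially Barsotti-Tate \emph{ordinary} lift $r^{(v)}:G_{F^+_v}\to\GL_2(\Zlbar)$ of determinant $\psi|_{G_{F^+_v}}\epsilon^{-1}\cdot\epsilon$ (i.e.\ $\psi|_{G_{F^+_v}}$ after the conventions of the lemma). Let $\tau_v$ be its inertial type, and let $R_v$ be the irreducible component of the corresponding potentially Barsotti-Tate lifting ring $R^{\square,\psi,\tau_v}_{\rbar|_{G_{F^+_v}}}[1/l]$ containing $r^{(v)}$. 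The key point, as in the proof of Theorem \ref{cor: main result on existence of ordinary lifts.}, is that $R_v$ is the \emph{unique} ordinary component (cf.\ Lemma 3.4.3 of \cite{ger}), so every closed point of $R_v[1/l]$ automatically corresponds to a potentially Barsotti-Tate ordinary lift.

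At each finite place $v\nmid l$ where $\rbar$ or $\psi$ is ramified, set $S=S_{\mathrm{ram}}\cup\{v\mid l\}$; for $v\in S_{\mathrm{ram}}$, pick any lift $r^{(v)}$ of $\rbar|_{G_{F^+_v}}$ of determinant $\psi|_{G_{F^+_v}}$, using Lemma 3.1.4 of \cite{kis04} together with the modularity of $\rbar$ to ensure that some such lift exists. As above, let $\tau_v$ be its inertial type and $R_v$ an irreducible component of the lifting ring containing $r^{(v)}$. I now invoke Theorem \ref{thm: main result on existence of ordinary lifts with types everywhere - clean hypotheses.} with this data: the hypotheses $l\ge 7$, $[F^+(\zeta_l):F^+]>4$, and $\rbar|_{G_{F^+(\zeta_l)}}$ irreducible are precisely those required, and the theorem produces a modular lift $r$ of $\rbar$ of determinant $\psi$, unramified outside $S$, of type $\tau_v$ at each $v\in S$, and potentially Barsotti-Tate of type $\tau_v$ at each $v\mid l$. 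The uniqueness of the ordinary component at places above $l$ then forces $r|_{G_{F^+_v}}$ to be ordinary at every $v\mid l$, giving the conclusion.

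There is really no serious obstacle here: all the hard work, in particular the reduction via the projective-image analysis of Section \ref{sec:big image GL2} into the two cases (2-big image vs.\ residually dihedral or $A_4$-projective), the construction of the potentially ordinary lift via rank $4$ unitary groups in the big-image case, and the base-change argument in the $A_4$ case, is already absorbed into the previous clean-hypothesis theorem. The present statement is simply the corresponding specialisation from ``for each $\tau_v$'' to ``for some $\tau_v$'', mirroring the passage from Theorem \ref{thm: main result on existence of ordinary lifts with types everywhere.} to Theorem \ref{cor: main result on existence of ordinary lifts.}.
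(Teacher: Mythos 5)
Your proposal is correct and is exactly the deduction the paper intends: the paper states this theorem with no separate proof, remarking only that it is ``immediately deduced'' from Theorem \ref{thm: main result on existence of ordinary lifts with types everywhere - clean hypotheses.}, and your argument fills this in by repeating verbatim the recipe used to derive Theorem \ref{cor: main result on existence of ordinary lifts.} from Theorem \ref{thm: main result on existence of ordinary lifts with types everywhere.} (local ordinary potentially Barsotti--Tate lifts at $v\mid l$ via Lemma \ref{lem:existence of local pot BT lift}, arbitrary lifts of the right determinant at the remaining bad places via Lemma 3.1.4 of \cite{kis04}, and uniqueness of the ordinary component). The only cosmetic slip is that the theorem as stated does not supply a character $\psi$, so you must first choose one with $\epsilon\psi$ of finite order and $\overline{\psi}=\det\rbar$, which is always possible.
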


\begin{thm}
  \label{thm: modularity lifting theorem, clean hypotheses.} Let $l\ge 7$ be a prime, $F^+$ a totally real field, and
  $r:G_{F^+}\to\GL_2(\Qlbar)$ a continuous representation
  unramified outside of a finite set of primes, with determinant a
  finite order character times the inverse of the cyclotomic character. Suppose
  further that
  \begin{enumerate}
  \item $r$ is potentially Barsotti-Tate for each $v|l$.
  \item $\rbar$ is modular.
  \item $\rbar|_{G_{F^+(\zeta_l)}}$ is irreducible.
  \item $[F^+(\zeta_l):F^+]>4$.
\end{enumerate}
   Then $r$ is modular.
\end{thm}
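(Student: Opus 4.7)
The plan is to mirror the proof of Theorem \ref{thm: modularity lifting theorem for BT without ord assumption}, simply substituting Theorem \ref{thm: main result on existence of ordinary lifts with types everywhere - clean hypotheses.} for Theorem \ref{thm: main result on existence of ordinary lifts with types everywhere.} at the crucial step. Since the latter was the only place where the big image / avoiding $\zeta_l$ hypotheses were invoked in the proof of Theorem \ref{thm: modularity lifting theorem for BT without ord assumption}, and Theorem \ref{thm: main result on existence of ordinary lifts with types everywhere - clean hypotheses.} is stated under exactly the hypotheses $l \geq 7$, $\rbar|_{G_{F^+(\zeta_l)}}$ irreducible, and $[F^+(\zeta_l):F^+] > 4$, this substitution works without any further work.

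More concretely, I would proceed as follows. Let $S$ be the set of finite places of $F^+$ at which $r$ is ramified (which, together with the hypothesis on $\det r$, automatically contains all places dividing $l$), and set $\psi = \det r$. After conjugation we may assume $r$ takes values in $\GL_2(\Zlbar)$. For each $v \in S$, let $\tau_v$ denote the inertial type of $r|_{G_{F^+_v}}$, and let $R_v$ be the unique irreducible component of the lifting ring $R^{\square,\psi,\tau_v}[1/l]$ containing the point corresponding to $r|_{G_{F^+_v}}$ (for $v | l$, this is a component of the potentially Barsotti-Tate lifting ring). All the hypotheses of Theorem \ref{thm: main result on existence of ordinary lifts with types everywhere - clean hypotheses.} are in place: $\rbar$ is modular, $\rbar|_{G_{F^+(\zeta_l)}}$ is irreducible, $[F^+(\zeta_l):F^+] > 4$, and each $\rbar|_{G_{F^+_v}}$ manifestly admits a lift of the specified type and determinant (namely $r|_{G_{F^+_v}}$ itself). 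Consequently we obtain a modular lift $r'$ of $\rbar$, unramified outside $S$, of determinant $\psi$, with $r'|_{G_{F^+_v}}$ giving a point on $R_v$ for each $v \in S$.

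In particular, for each $v | l$ the representations $r'|_{G_{F^+_v}}$ and $r|_{G_{F^+_v}}$ lie on the same irreducible component of the potentially Barsotti-Tate lifting ring, so $r'|_{G_{F^+_v}}$ is potentially ordinary precisely when $r|_{G_{F^+_v}}$ is. The modularity of $r$ then follows directly from Theorem 1.1 of \cite{MR2480560}, applied at each $v|l$ separately to the ordinary or non-ordinary case as appropriate.

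The step doing all the work is, of course, the invocation of Theorem \ref{thm: main result on existence of ordinary lifts with types everywhere - clean hypotheses.}, which in turn rests on the whole machinery developed in earlier sections (the rank $4$ unitary group deformation argument, the character-building lemma, and the untwisting in the CM case). Given that theorem in hand, the present statement is a formal corollary, and there is no genuine new obstacle: the hypotheses $l \geq 7$ and $[F^+(\zeta_l):F^+] > 4$ were precisely designed to force the $2$-big / non-dihedral alternative of \S\ref{sec:big image GL2} to produce the required input for that theorem.
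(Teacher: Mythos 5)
Your proposal is correct and is exactly the argument the paper intends: the theorem is stated as an immediate consequence of Theorem \ref{thm: main result on existence of ordinary lifts with types everywhere - clean hypotheses.}, obtained by running the proof of Theorem \ref{thm: modularity lifting theorem for BT without ord assumption} verbatim with that theorem substituted for Theorem \ref{thm: main result on existence of ordinary lifts with types everywhere.}, and then invoking Theorem 1.1 of \cite{MR2480560}.
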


We also deduce improvements such as the following to the main theorem of
\cite{gee08serrewts}, to which we refer the reader for the definitions
of the terminology used.
\begin{thm}
  Let $l\ge 7$ be prime which is unramified in a totally real field $F^+$, and let $\rbar:G_{F^+}\to\GL_2(\Flbar)$ be
  an irreducible modular representation. Assume that
  $\rbar|_{G_{F^+(\zeta_l)}}$ is irreducible.

Let $\sigma$ be a regular weight. Then $\rbar$ is modular of weight
$\sigma$ if and only if $\sigma\in W(\rbar)$.
\end{thm}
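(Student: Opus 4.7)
The plan is to deduce this from the main theorem of \cite{gee08serrewts} combined with the new modularity lifting results (Theorem \ref{thm: modularity lifting theorem, clean hypotheses.}) and ordinary lifts results (Theorem \ref{thm: main existence of ord lifts theorem with ``clean'' hypotheses}) proved earlier in this paper. The ``only if'' direction is local-global compatibility at $l$ and was already established in \cite{gee08serrewts}; the content lies in the ``if'' direction.

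Fix $\sigma\in W(\rbar)$. The main theorem of \cite{gee08serrewts} establishes this direction for ``non-ordinary'' weights unconditionally, and for general $\sigma$ conditional on the existence of a modular lift of $\rbar$ which is ordinary at each place $v\mid l$ where $\sigma$ is ordinary. The definition of $W(\rbar)$ forces $\rbar|_{G_{F^+_v}}$ to be reducible at each place $v\mid l$ at which $\sigma$ is an ordinary weight; together with the hypotheses of the theorem (namely $l\ge 7$ and the irreducibility of $\rbar|_{G_{F^+(\zeta_l)}}$), this is precisely the input required by Theorem \ref{thm: main existence of ord lifts theorem with ``clean'' hypotheses} to produce a modular potentially Barsotti-Tate lift of $\rbar$ which is ordinary at each such $v$. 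Plugging this lift back into the proof of \cite{gee08serrewts} (which at the crucial step invokes the modularity lifting theorem of \cite{MR2280776}, whose ordinary-lift hypothesis is now unconditionally verified by the results of the present paper) then completes the argument.

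The main obstacle will be handling weights $\sigma$ which are ordinary at some places above $l$ and non-ordinary at others; for such weights one must construct a \emph{single} modular lift which is ordinary precisely at the ordinary places. This is accomplished by invoking Theorem \ref{thm: main result on existence of ordinary lifts with types everywhere - clean hypotheses.} with the inertial types $\tau_v$ dictated by $\sigma$ (non-ordinary types at non-ordinary places, ordinary types at ordinary places) and the components $R_v$ forced by the description of $W(\rbar)$ in \cite{gee08serrewts}. A secondary technicality is that the clean-hypothesis condition $[F^+(\zeta_l):F^+]>4$ need not hold outright; when it fails one either passes to a suitable solvable totally real base change that is linearly disjoint from $\overline{F^+}^{\ker\rbar}(\zeta_l)$ over $F^+$ (noting that modularity of a specified weight descends under solvable base change, as in \cite{gee08serrewts}), or invokes the dihedral alternative of Lemma \ref{lem:a bigness alternative II} and Proposition \ref{prop: potential ordinary lift in the dihedral case.}, exactly as in the proof of Theorem \ref{thm: main existence of ord lifts theorem with ``clean'' hypotheses}. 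Beyond this bookkeeping, no new ideas are required: the paper's contribution to the weight part of Serre's conjecture is entirely contained in the production of ordinary lifts.
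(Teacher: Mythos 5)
Your proposal is correct and follows essentially the same route as the paper: the ``only if'' direction is already in \cite{gee08serrewts}, and the ``if'' direction is obtained by rerunning the argument of \cite{gee08serrewts} with its appeal to Corollary 3.1.7 of \cite{gee061} replaced by Theorem \ref{thm: main result on existence of ordinary lifts with types everywhere - clean hypotheses.}, which supplies the modular lift lying on the prescribed local components (ordinary or non-ordinary as dictated by $\sigma$). The only superfluous part is your contingency plan for the case $[F^+(\zeta_l):F^+]\le 4$: since $l\ge 7$ and $l$ is unramified in $F^+$, the extension $F^+/\Q$ is linearly disjoint from the totally ramified extension $\Q(\zeta_l)/\Q$, so $[F^+(\zeta_l):F^+]=l-1\ge 6>4$ holds automatically and neither the solvable base change nor the dihedral alternative is ever needed -- this observation is in fact the only explicit step in the paper's own proof.
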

\begin{proof}
  Note that since $l\ge 7$ and $l$ is unramified in $F^+$,
  $[F^+(\zeta_l):F^+]>4$. The result now follows by replacing the use
  of Corollary 3.1.7 of \cite{gee061} with Theorem \ref{thm: main result on existence of ordinary lifts with types
    everywhere - clean hypotheses.} above.
\end{proof}

\bibliographystyle{amsalpha} 
\bibliography{barnetlambgeegeraghty}
\end{document}